\documentclass[11pt,a4paper]{article}
\usepackage[margin=1in]{geometry}
\usepackage{graphicx}
\usepackage{amsmath, amsthm, amssymb, amsfonts}
\usepackage{bbm}
\usepackage{enumerate}
\usepackage{mathtools}
\usepackage{xcolor}
\usepackage{hyperref}
\usepackage{centernot}

\renewcommand{\epsilon}{\varepsilon}
\graphicspath{ {./images/} }
\numberwithin{equation}{section}

\theoremstyle{definition}
\newtheorem{definition}{Definition}[section]

\newtheorem{remark}[definition]{Remark}

\theoremstyle{plain}
\newtheorem{theorem}[definition]{Theorem}
\newtheorem{proposition}[definition]{Proposition}
\newtheorem{lemma}[definition]{Lemma}
\newtheorem{corollary}[definition]{Corollary}

\newcommand{\sfa}{\mathsf{a}}
\newcommand{\sfb}{\mathsf{b}}
\newcommand{\bfx}{\mathbf{x}}

\usepackage{authblk}

\title{Subcritical sharpness for real-valued spin models}

\author[1]{Christoforos Panagiotis}
\author[2]{William Veitch}
\affil[1,2]{{Department of Mathematical Sciences}\\
        {University of Bath}\\
        {UK}}

\begin{document}

\maketitle

\begin{abstract}
In this paper, we consider a large family of real-valued spin models on general transitive graphs. We show that, in the subcritical regime $\beta<\beta_c$, the correlations of the model decay exponentially fast. To prove this result, we consider the random cluster (a.k.a.~FK percolation) representation of the model and obtain an inequality that generalises the OSSS inequality to monotonic measures with random connection probabilities, thus extending the inequality of Duminil-Copin, Raoufi, and Tassion \cite{sharpness_annals}. Our results apply in particular to the Blume--Capel model and general $P(\varphi)$ models, going beyond the cases of the Ising and $\varphi^4$ models treated by Aizenman, Barsky, and Fernández \cite{ABF87}.
\end{abstract}

\section{Introduction}

In this paper, we consider a general family of spin models with a $\mathbb{Z}/2\mathbb{Z}$ symmetry, where each spin takes values in $\mathbb{R}$. The spin model on a finite graph $(\Lambda, E)$ with free boundary conditions is a probability measure on configurations $\varphi \in \mathbb{R}^{\Lambda}$, with the expectation of a bounded measurable function $f: \mathbb{R}^{\Lambda} \to \mathbb{R}$ given by
\begin{equation*}
    \langle f \rangle = \frac{1}{Z} \int_{\mathbb{R}^{\Lambda}} f(\varphi) \exp\left(\sum_{\{x,y\} \in E} \beta \varphi_x \varphi_y\right) \prod_{x \in \Lambda} \mathrm{d} \rho(\varphi_x),
\end{equation*}
where $\beta \geq 0$ is the \textit{inverse temperature}, $\rho$ is an even single-site measure, and $Z$ is the appropriate normalisation constant, called the \textit{partition function}. We are interested in the case where the single-site measure $\rho$ has super-Gaussian tails, so that the measure is well-defined for all values of $\beta\geq 0$. 

The study of this class of spin systems dates back to the 1920s, beginning with the introduction of the Ising model by Lenz and Ising \cite{Lenz1920beitrag, Ising1924} as a model of ferromagnetism. This is possibly the simplest model in our class, with spins taking the values $+1$ and $-1$, corresponding to the two possible spin orientations. Since these early days, the study of spin models has expanded drastically, with several new models appearing over the years. In the 1960s, Blume \cite{BL} and Capel \cite{Capel} independently introduced what is now called the Blume--Capel model to study phase transitions in systems of triplet ions, such as the exotic phase transition observed in the magnetisation of Uranium dioxide, by incorporating vacancies, i.e.\ allowing spins to additionally take the value $0$. Around the same time, developments in the study of Euclidean quantum field theory \cite{N66,GJ73,frohlich1977pure, borgs1989unified} led to the introduction of continuous-spin models such as the $\varphi^4$ model and, more generally, $P(\varphi)$ models. These models are defined by letting 
\[\mathrm{d}\rho(\varphi_x)=\exp(-P(\varphi_x))\mathrm{d}\varphi_x,\]
where $P$ is an even polynomial of degree at least $4$ and of positive leading coefficient. The $\varphi^4$ model is of particular interest among this class, as it is closely related to the Ising model --- see \cite{GS}. For a comprehensive introduction to these models, the interested reader can consult \cite{DC_lecturenotes,friedli_velenik_2017,Glimm-Jaffe,WP21}.

Despite the wide variety of spin models considered in this paper, they all share a common feature: they undergo a \emph{phase transition}. To define this precisely, let us first consider an infinite, connected, locally finite, transitive graph $G=(V,E)$, which is fixed throughout the paper, and let $o \in V$ be a fixed origin. We are interested in the set of infinite-volume Gibbs measures, which are defined via the \textit{Dobrushin--Lanford--Ruelle (DLR) equation} (see Definition \ref{def: Gibbs} below). Such measures arise naturally as weak limits of finite-volume measures with \emph{boundary conditions}. In cases where the spins take values in a bounded set, such as the Ising model, one can define a maximal Gibbs measure at infinite volume, called the \emph{plus measure} and denoted $\langle \cdot \rangle^+_{\beta}$, which arises as the limit of measures with maximal boundary conditions. However, when the values of the spins are unbounded, there is no canonical way of defining maximal boundary conditions. As it turns out, we can still define a plus measure which is maximal at infinite volume, provided that we restrict to the set of \emph{regular} Gibbs measures, which roughly speaking are measures having the same type of tails as the single-site measure $\rho$. This was established on the lattice $\mathbb{Z}^{d}$ by Lebowitz and Presutti \cite{Lebowitz_Presutti} utilising the regularity estimates developed by Ruelle \cite{Ruelle1970, Ruelle_estimates}. It was very recently extended to general graphs in \cite{PV26}, where it was also shown that the set of regular Gibbs measures includes, in particular, weak limits of finite-volume measures with boundary conditions growing at most exponentially fast. With the plus measure $\langle \cdot \rangle^+_{\beta}$ at hand, we define the critical point $\beta_c$ in terms of the \emph{spontaneous magnetisation} $\langle \varphi_o \rangle^+_{\beta}$ as
\[
\beta_c:=\inf\{\beta\geq 0: \, \langle \varphi_o \rangle^+_{\beta}>0\}.
\]
It is classical that when $\langle \varphi_o \rangle^+_{\beta}=0$ there exists a unique infinite-volume (regular) Gibbs measure --- see e.g.\ \cite[Proposition C.2]{random_tangled}.

In this article, we are interested in the \emph{subcritical regime} $\beta<\beta_c$. For the Ising model, it is standard that on transitive graphs one has $\beta_c>0$, so the subcritical interval is always non-empty. In Proposition~\ref{prop:non-trivial}, we show that this result extends to all models considered in this article, using the regularity results from \cite{PV26}. For the \emph{supercritical regime} $\beta>\beta_c$, the question of non-triviality is more subtle. On the one hand, one has $\beta_c=\infty$ on the one-dimensional integer lattice $\mathbb{Z}$. On the other hand, for the Ising model on the hypercubic lattice $\mathbb{Z}^d$ with $d\geq 2$, a celebrated work of Peierls \cite{P36} showed that $\beta_c<\infty$; see also \cite{GJS75,FrohlichSimonSpencerIRBounds1976} for an extension to other spin models. It is now well understood that the phase transition of the Ising model on a graph $G$ is non-trivial whenever the phase transition of Bernoulli bond percolation on $G$ is non-trivial. In Proposition~\ref{prop:non-trivial}, we extend this result to the class of real-valued spin models. Let us also mention that the question of non-triviality of the phase transition for Bernoulli percolation was recently settled in \cite{DGRSY20,EST25}, where it was shown that on transitive graphs that are not one-dimensional, the phase transition is non-trivial.

It is classical that for sufficiently small values of $\beta$, correlations decay exponentially fast; this follows, for instance, from cluster expansion methods or high-temperature expansions \cite{Glimm-Jaffe}. Our main theorem shows that this exponential decay persists throughout the entire subcritical regime, a phenomenon known as \emph{subcritical sharpness}. Here, we state a stronger result that applies also to finite-volume measures with weakly growing boundary conditions. We denote the graph distance in $G$ by $d_G$ and for $x \in V, k \geq 0$ let $\Lambda_k = \{x \in V : d_G(o, x) \leq k\}$. 

\begin{theorem}\label{thm: main theorem}
For every $\beta<\beta_c$, there exist $\lambda,c>0$ such that the following holds. For every $x\in V$ such that $d_G(o,x)$ is large enough, every $n \geq d_G(o,x)$, and all boundary conditions $\eta$ satisfying $|\eta_y|\leq e^{\lambda d_G(o, y)}$ for every $y\in V$,
\[
\langle \varphi_o \varphi_x \rangle^{\eta}_{\Lambda_n,\beta}\leq e^{-cd_G(o,x)}. 
\]
In particular,
\[
\langle \varphi_o \varphi_x \rangle^+_{\beta}\leq e^{-cd_G(o,x)}. 
\]
\end{theorem}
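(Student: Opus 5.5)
We plan to follow the strategy of Duminil-Copin, Raoufi and Tassion, transported to the random cluster representation of the real-valued spin model.

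\textbf{The random cluster representation.} Conditionally on the absolute values $a=|\varphi|$, the signs form an Ising model with couplings $\beta a_x a_y$. The joint law of $(a,\omega)$ is then an FK-type measure: each edge $\{x,y\}$ is open with conditional probability $1-e^{-2\beta a_x a_y}$, and the law of $a$ is itself tilted by the cluster structure. In this representation
\[
\langle \varphi_o\varphi_x\rangle^{\eta}_{\Lambda_n,\beta}=\mathbf{E}\bigl[a_o a_x \mathbbm{1}\{o\leftrightarrow x\}\bigr],
\]
where, with boundary conditions $\eta$, connection to the boundary is also counted. So it suffices to bound a weighted connection probability. For this we use the quantity
\[
\theta_n(\beta)=\mathbf{E}_{\Lambda_n}\bigl[a_o\mathbbm{1}\{o\leftrightarrow\partial\Lambda_n\}\bigr]
\]
under wired or plus-type boundary conditions of growth $e^{\lambda n}$.

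\textbf{The generalised OSSS inequality.} The key new input is an OSSS inequality for monotonic measures whose edge states are determined by random connection probabilities, i.e.\ by the pair $(a,\omega)$. We encode each edge by a uniform variable $U_e$ and each vertex by its spin magnitude. A randomized algorithm explores clusters from $\partial\Lambda_k$ for a uniformly chosen $k\le n$, revealing vertex variables as it goes. This yields a differential inequality of the form
\[
\theta_n'(\beta)\ \ge\ c\,\frac{n}{\Sigma_n(\beta)}\,\theta_n(\beta)\bigl(1-\theta_n(\beta)\bigr),
\qquad \Sigma_n=\sum_{k<n}\theta_k .
\]
This step requires two ingredients:
\begin{itemize}
\item[(i)] monotonicity (FKG/Holley) of the joint measure in $(a,\omega)$, which holds for the Griffiths--Simon / GKS-class single-site measures assumed in the paper;
\item[(ii)] a Russo-type formula bounding the influences of vertex variables and edge variables by $\partial_\beta$ of the expectation.
\end{itemize}
Ingredient (ii) is the step I expect to be the main obstacle. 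Revealing $a_x$ influences many edges at once, and the unbounded spins make the derivative comparison lossy. I would first try truncation: restrict to $|a_x|\le M$, using the regularity estimates of \cite{PV26} to show that large spins cost at most $e^{-cM^2}$. Then compare influences through $\partial_\beta\log(1-e^{-2\beta a_xa_y})\ge c(M)$.

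\textbf{From the differential inequality to exponential decay.} The standard ODE lemma then gives a dichotomy: either $\theta(\beta)>0$, meaning positive magnetisation, or $\theta_n(\beta')\le e^{-cn}$ for every $\beta'<\beta$. Since $\beta<\beta_c$, we pick $\beta<\beta''<\beta_c$ and obtain $\theta_n(\beta)\le e^{-cn}$.

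\textbf{Transfer to two-point functions.} First, $\{o\leftrightarrow x\}$ forces $o\leftrightarrow\partial\Lambda_{d(o,x)/2}$. Second, the weights $a_oa_x$ and the boundary terms $|\eta_y|\le e^{\lambda d(o,y)}$ are controlled by H\"older's inequality together with the super-Gaussian regularity bounds. The exponentially small boundary growth is absorbed by choosing $\lambda<c/2$. This gives the finite-volume bound for all large $d(o,x)$. The plus-measure statement follows by taking weak limits, since $\langle\cdot\rangle^+_\beta$ is a limit of such boundary conditions \cite{PV26}.
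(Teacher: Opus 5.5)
Your outline follows the paper's architecture: random cluster representation, OSSS for explorations started from $\partial\Lambda_k$ and averaged over $k$, the ODE lemma of \cite{sharpness_annals}, then a regularity/Cauchy--Schwarz transfer. However, of the two ingredients that carry the actual content, the first is wrong and the second is unresolved.

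Ingredient (i) is false as stated. The paper assumes nothing of Griffiths--Simon type, only that $\rho$ is even, super-Gaussian and non-trivial. The joint law of $(\mathsf{a},\omega)$ does satisfy FKG, but it is \emph{not} monotonic in the Holley sense used in the OSSS argument of \cite{sharpness_annals}. Conditioning on edge states alone does not give conditional laws that increase with the conditioned configuration; this already fails for Blume--Capel \cite{GG}. What holds is weak monotonicity (Definition~\ref{Def: weak monotonicity}, Proposition~\ref{prop: weak monotonicity}), i.e.\ monotonicity when edges are conditioned on \emph{together with their endpoints}. This is why the inequality is only available for admissible trees, which reveal $\mathsf{a}_x,\mathsf{a}_y$ before $\omega_{xy}$. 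It is also what forces vertex terms into the bound.

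Ingredient (ii), which you rightly flag as the crux, is where your argument breaks down, for two separate reasons. First, the OSSS bound contains vertex terms (the effect of resampling $\mathsf{a}_y$). These are not covariances with edge variables and do not appear in the $\beta$-derivative. Second, the law of $\mathsf{a}$ depends on $\beta$, so the derivative is $\sum_{xy}\mathrm{Cov}\bigl(f,\tfrac{2\mathsf{a}_x\mathsf{a}_y\omega_{xy}}{p_{xy}}-\mathsf{a}_x\mathsf{a}_y\bigr)$. A lower bound on $\partial_\beta\log p_{xy}$ given $\mathsf{a}$ controls only the part of this covariance computed conditionally on $\mathsf{a}$. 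It does not control the part coming from the $\beta$-dependence of the $\mathsf{a}$-law, including the $-\mathsf{a}_x\mathsf{a}_y$ term, so it does not produce $\mathrm{Cov}_\mu(f,\omega_{xy})$. Truncating at $\mathsf{a}\le M$ does not rescue this. It yields additive errors of order $e^{-cM^2}$ times vertex revealments, which do not scale with $\theta_n$. They therefore cannot be absorbed in $\theta_n'\gtrsim\frac{n}{\Sigma_n}\theta_n$ in the regime $\theta_n\to0$ that matters.

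The paper needs no truncation:
\begin{itemize}
\item At a vertex step it compares both $\omega^t$ and $\omega^{t-1}$ with the configuration where $\mathsf{a}_y$ is set at its lowest quantile. By weak monotonicity this dominates a sample of $\mu[\,\cdot\mid\mathsf{a}_y=0]$. Then \eqref{eq: zero conditioning} (Lemma~\ref{Lemma: zero conditioning}) bounds the vertex influence by $\frac{2}{\mu[C_y]}\sum_{z\sim y}\mathrm{Cov}_\mu(f,\omega_{yz})$, with $\mu[C_y]$ bounded below by regularity.
\item For the derivative, Lemmas~\ref{lem: cov comparison} and~\ref{lem:cov comparison new} combine three inputs to get a lower bound $\varepsilon(\beta)\,\mathrm{Cov}_\mu(f,\omega_{xy})$: the finite-energy bound $\mu[\omega_{xy}\mid\mathsf{a}]\ge p_{xy}/(2-p_{xy})$, FKG given $\mathsf{a}$, and FKG for $\mu[\,\cdot\mid\omega_{xy}=1]$ (Proposition~\ref{prop: open edge FKG}).
\end{itemize}

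Your set-up also needs changing.
\begin{itemize}
\item $\mathsf{a}_o\mathbbm{1}\{o\leftrightarrow\partial\Lambda_n\}$ is unbounded and depends on vertices, whereas OSSS concerns $[0,1]$-valued functions of edges.
\item On $\Lambda_n$ with a boundary field of size $e^{\lambda n}$, $\mu[C_x]$ degenerates near the boundary. Moreover, $\max_x\sum_k\mu[x\leftrightarrow\partial\Lambda_k(x)]$ cannot be compared with $\sum_k\theta_k$ by translation invariance.
\item The paper instead takes $\theta_n=\Psi^{(w,\mathsf{p})}_{\Lambda_{2n},\beta}[o\leftrightarrow\partial\Lambda_n]$ with $\mathsf{p}_y=e^{Md_G(o,y)}$. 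Here $M$ is chosen so large that, by regularity, large spins on $\partial^{\mathrm{ext}}\Lambda_{2k}(x)$ are rarer than the trivial lower bound $e^{-c'k}$ on $\theta_k$.
\item For signed $\eta$ you need Lebowitz's inequality (Proposition~\ref{prop:monotonicity in bc}) to reduce to $\eta=\mathsf{p}\ge0$ before using the wired representation.
\end{itemize}
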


Theorem~\ref{thm: main theorem} extends the work of Aizenman, Barsky, and Fernández \cite{ABF87} who proved subcritical sharpness for the Ising and $\varphi^4$ models, as well as the more recent subcritical sharpness result for the Blume--Capel model \cite{Blume-Capel}. See also \cite{DuminilTassionNewProofSharpness2016} for a short proof in the case of the Ising model.

The proof of Theorem~\ref{thm: main theorem} uses the random cluster (a.k.a.~FK percolation) representation studied in \cite{well_behaved}, which is a measure on pairs $(\sfa,\omega)$, where $\sfa$ is the absolute value field and $\omega$ is a percolation configuration. Given a spin configuration $\varphi$, we sample a pair $(\sfa, \omega)$ by setting $\sfa_x = |\varphi_x|$ and noting that the sign field $\mathrm{sgn}(\varphi)$ is distributed as an Ising model with random coupling constants determined by the absolute value field $\sfa$. We then sample $\omega$ using the Edwards--Sokal coupling for the Ising model, which allows us to express the two-point functions of the spin model in terms of connectivity probabilities in the random cluster representation. The desired result then follows from proving subcritical sharpness for the random cluster representation.

For percolation models, results on subcritical sharpness go back to the 1980s \cite{mensikov1986coincidence,aizenman1987sharpness}, where they were first established for Bernoulli percolation. Since then, several new proofs of this result have appeared \cite{DuminilTassionNewProofSharpness2016, sharpness_annals, Hut20, Van23}. A common feature of these proofs is the use of differential inequalities relating the derivative of a well-chosen observable with respect to $\beta$ (or another natural parameter of the model) to a function of the observable, with the exception of \cite{Van23}, which instead relies on couplings.

The proof of Theorem~\ref{thm: main theorem} follows a strategy similar to \cite{sharpness_annals}, which applies beyond Bernoulli percolation to the classical random cluster model. At the heart of our argument is a generalisation of the OSSS inequality of O’Donnell, Saks, Schramm, and Servedio \cite{OSSS} originally introduced for product measures. This inequality uses an exploration to get an improvement to the Poincaré-type inequality that bounds the variance of an increasing function $f$ by the sum over edges $e$ of the covariances with respect to $\omega_e$. The exploration is encoded in terms of a decision tree, and the improvement arises from the revealment of an edge $e$, which is roughly speaking the probability that $e$ is explored by the decision tree.
The OSSS inequality was extended to monotonic measures in \cite{sharpness_annals}. The standard random cluster measure is indeed a monotonic measure. However, when the coupling constants themselves become random, the resulting random cluster measure is not necessarily monotonic anymore, as was already observed for the representation of the Blume--Capel model in \cite{GG}. In \cite{Blume-Capel}, a weaker property, called weak monotonicity, was introduced, which allowed for extending the OSSS inequality to the random cluster model on a site percolation cluster. In this paper, we generalise the latter result to a large family of random cluster models with random coupling constants.

Below, a decision tree $T$ is said to be admissible if it explores the state of an edge only after exploring both endpoints, and $\delta_{xy}(f, T)$ is a combination of the revealment probability of the edge $xy$ from the classical OSSS inequality with additional terms relating to the exploration of the endpoints $x$ and $y$ --- see Section \ref{section: OSSS} for the precise definitions. 

\begin{theorem}[OSSS inequality]
    \label{Thm: OSSS}
    Let $\Lambda\subset V$ be finite and let $\overline{E}(\Lambda) \subset E$ be the set of edges with at least one endpoint in $\Lambda$. Let $\mu$ be a measure on $(\sfa,\omega)\in (\mathbb{R}^+)^{\Lambda}\times \{0,1\}^{\overline{E}(\Lambda)}$ that satisfies weak monotonicity (Definition \ref{Def: weak monotonicity}) and
    for any vertex $z \in \Lambda$ and any increasing function $f: \{0,1\}^{\overline{E}(\Lambda)} \rightarrow [0, 1]$ depending only on the states of edges,
    \begin{equation}\label{eq: zero conditioning}
        \mu [f(\omega) \mid \omega_{xz} = 0 \; \forall x \sim z]
        \leq \mu [f(\omega) \mid  \mathsf{a}_z = 0].
    \end{equation}
    Then for any increasing function $f: \{0, 1\}^{\overline{E}(\Lambda)} \rightarrow [0,1]$ depending only on the states of edges and any admissible decision tree $T$,
    \begin{align*}
        \mathrm{Var}_{\mu}(f) \leq \sum_{xy \in \overline{E}(\Lambda)} \delta_{xy}(f, T) \mathrm{Cov}_{\mu} (f, \omega_{xy}).
    \end{align*}
\end{theorem}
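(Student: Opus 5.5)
We follow the Duminil-Copin--Raoufi--Tassion coupling proof of the OSSS inequality for monotonic measures, adapted to the pair $(\sfa,\omega)$. Two pieces of randomness are revealed by $T$: vertex variables $\sfa_x$, queried first, and edge variables $\omega_{xy}$, queried only after both endpoints.

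\textbf{Step 1: coupling construction.} Sample $(\sfa,\omega)\sim\mu$ sequentially along $T$ using i.i.d.\ uniforms. Each vertex $z$ gets a uniform $U_z$, and $\sfa_z$ is its conditional quantile given what has been revealed so far. Each edge $e$ gets a uniform $V_e$, and we set $\omega_e=1$ iff $V_e\geq 1-\mu(\omega_e=1\mid\text{past})$. Take an independent copy $(U',V')$ and define the hybrid configurations $X^{t}$. In $X^t$ the first $t$ steps of the exploration use the primed variables and the later steps use the unprimed ones, run through the same sequential sampling map. Then $X^0$ has law $\mu$, $X^{\infty}$ is independent of $(U,V)$, and
\[
\mathrm{Var}_\mu(f)\le \mathbb E\bigl[|f(X^0)-f(X^\infty)|\bigr]\le\sum_t \mathbb E\bigl[|f(X^{t})-f(X^{t+1})|\bigr].
\]
Weak monotonicity (Definition~\ref{Def: weak monotonicity}) is exactly what should make the edge-resampling steps monotone in $V_e$. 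Note that conditional edge probabilities need only increase in the past edge states for fixed $\sfa$ and for vertices already explored. So when only $V_e$ changes at step $t$, the two configurations are ordered, and $|f(X^t)-f(X^{t+1})|=\pm(f(X^t)-f(X^{t+1}))$. As in \cite{sharpness_annals}, summing over the time at which $e$ is explored gives a contribution bounded by the revealment of $e$ times $\mathrm{Cov}_\mu(f,\omega_e)$. To get this, we write $\mathbb E[f\mid V_e]$ and use the identity $\mathrm{Cov}(f,\omega_e)=\mathbb E[(f(\cdot)\mid V_e\text{ high})-(f\mid V_e\text{ low})]\cdot\mathrm{Var}(\omega_e)$-type comparisons, together with increasing-ness of $f$.

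\textbf{Step 2: vertex steps.} The new difficulty is the steps where $U_z$ is resampled. Changing $\sfa_z$ does not produce ordered edge configurations, since $\mu$ is not monotonic in $\sfa$. Here hypothesis~\eqref{eq: zero conditioning} enters. We compare both configurations with the one in which $\sfa_z=0$, which forces all edges at $z$ to be closed. This gives
\[
|f(X^t)-f(X^{t+1})|\le \bigl(f(X^t)-f(X^{t,z\to0})\bigr)+\bigl(f(X^{t+1})-f(X^{t+1,z\to0})\bigr).
\]
Each bracket should be nonnegative in expectation by \eqref{eq: zero conditioning} together with weak monotonicity: the conditioning on $\omega_{xz}=0$ for all $x\sim z$ dominates conditioning on $\sfa_z=0$. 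Moreover, each bracket is bounded by $\sum_{x\sim z}$ of the effect of opening/closing $\omega_{xz}$. Re-expressing these through the edge covariances $\mathrm{Cov}_\mu(f,\omega_{xz})$ produces the extra terms in $\delta_{xy}(f,T)$ coming from the probability that an endpoint $x$ or $y$ is explored. Admissibility guarantees that when $z$ is revealed, none of its incident edges has been, so the resampling of $z$ can be pushed onto those edges.

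\textbf{Step 3 and main obstacle.} We then collect terms and check that the coefficients match the definition of $\delta_{xy}(f,T)$. The main obstacle is Step 2: making rigorous that the vertex-resampling discrepancy is dominated by edge covariances, using only the one-sided inequality \eqref{eq: zero conditioning} rather than full monotonicity. Our first attempt will be to construct an explicit monotone coupling between $\mu[\cdot\mid\omega_{xz}=0\ \forall x\sim z]$ and $\mu[\cdot\mid \sfa_z=0]$ inside the sequential sampler. We then bound $\mathbb E|f-f^{z\to0}|$ by $\mathbb E[f]-\mu[f\mid \omega_{xz}=0\,\forall x]$, which telescopes into a sum of $\mathrm{Cov}_\mu(f,\omega_{xz})/\mu(\omega_{xz}=0\mid\cdots)$ terms.
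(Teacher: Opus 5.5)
Your architecture matches the paper's: the hybrid coupling of \cite{sharpness_annals}, edge steps handled as there, and at a vertex step the triangle inequality through the configuration $X^*$ in which the input at the time $z$ is revealed is replaced by $0$. Your target $\mathbb E[f(X^t)+f(X^{t+1})-2f(X^*)]\le 2(\mu[f]-\mu[f\mid C_z])$ is also the right one. The gap is in how you propose to reach that target.

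First, the two brackets are nonnegative pointwise, by weak monotonicity and admissibility alone: lowering $\sfa_z$ lowers every later sample. This has nothing to do with \eqref{eq: zero conditioning}. You also state \eqref{eq: zero conditioning} backwards: it says $\mu[f\mid C_z]\le\mu[f\mid\sfa_z=0]$.

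The step you are actually missing is the lower bound $\mathbb E[f(X^*)\mid\text{inputs before }z]\ge\mu[f\mid\sfa_z=0]$. The paper gets it by feeding the same inputs to the sampler $F^{\mu'}_{\mathbf x}$ for $\mu'=\mu[\,\cdot\mid\sfa_z=0]$. By weak monotonicity its output lies below $X^*$. Since the input at $z$'s time is irrelevant under $\mu'$, Lemma~\ref{Lemma: F_x(U)} shows that this output has law $\mu'$. After that, \eqref{eq: zero conditioning} is used only as an inequality between expectations. No monotone coupling of $\mu[\,\cdot\mid C_z]$ with $\mu[\,\cdot\mid\sfa_z=0]$ is needed.

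Your idea that the vertex discrepancy can be ``pushed onto'' the edges at $z$ pointwise is also false. Changing $\sfa_z$ changes the conditional law of every variable explored afterwards, not just the edges at $z$.

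Your conversion to edge covariances fails as stated. Telescoping $\mu[f]-\mu[f\mid C_z]$ edge by edge gives $\sum_i(\mu_{i-1}[f]-\mu_{i-1}[f\mid\omega_{e_i}=0])$, where $\mu_{i-1}$ is $\mu$ conditioned on the first $i-1$ edges at $z$ being closed. These terms are covariances under conditioned measures, divided by $\mu_{i-1}[\omega_{e_i}=0]$. Nothing compares them with $\mathrm{Cov}_\mu(f,\omega_{xz})/\mu[C_z]$, which is the form forced by $\delta_{xy}(f,T)$.

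The paper instead uses the exact identity $\mu[f]-\mu[f\mid C_z]=\mathrm{Cov}_\mu(f,\mathbf 1_{A_z})/\mu[C_z]$, where $A_z=C_z^{\mathsf c}$. It then applies FKG for $\mu$ (Proposition~\ref{prop: monotonicity implies FKG}) to the increasing function $\sum_{x\sim z}\omega_{xz}-\mathbf 1_{A_z}$. This gives $\mathrm{Cov}_\mu(f,\mathbf 1_{A_z})\le\sum_{x\sim z}\mathrm{Cov}_\mu(f,\omega_{xz})$.

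There are also smaller issues in Step 1.
\begin{itemize}
\item Your hybrids must switch to fresh variables after $\tau$, as in $W^t=(V_1,\dots,V_t,U_{t+1},\dots,U_\tau,V_{\tau+1},\dots,V_n)$, with $U$ the original and $V$ the fresh inputs. Otherwise the increments for $t>\tau$ do not vanish and no revealment appears.
\item You need the factor $\tfrac12$ in $\mathrm{Var}_\mu(f)\le\tfrac12\mathbb E|f(X^0)-f(X^n)|$ to get coefficient $1$.
\item The edge step should be the expansion $|f(\omega^t)-f(\omega^{t-1})|=(f(\omega^t)-f(\omega^{t-1}))(\omega^t_e-\omega^{t-1}_e)$ together with Harris--FKG for the i.i.d.\ resampling variables. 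The ``identity'' you quote is not what is needed.
\end{itemize}
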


Let us remark that in Theorem~\ref{Thm: OSSS}, despite the fact that $\mu$ is a measure on pairs $(\sfa,\omega)$, the inequality involves covariances only with respect to the edge states. This is in contrast to the inequality obtained in \cite{Blume-Capel} that involves covariances with respect to the absolute value field as well. Nevertheless, the absolute value field still plays a role through the revealment probability $\delta_{xy}(f, T)$, which is natural to expect due to the fact that in our context, decision trees explore the states of vertices as well as edges. In the proof of Theorem~\ref{Thm: OSSS}, we handle the contribution of the absolute value field using a correlation inequality, namely our assumption \eqref{eq: zero conditioning} in the statement of Theorem~\ref{Thm: OSSS}, which informally speaking states that closing a vertex $x$ is less costly than closing all edges incident to $x$. The fact that the random cluster representation of the spin models studied in this article satisfies \eqref{eq: zero conditioning} is proved in Lemma~\ref{Lemma: zero conditioning}.

With Theorem~\ref{Thm: OSSS} at hand, the first step in proving Theorem~\ref{thm: main theorem} is to apply the OSSS inequality for $f$ being the indicator of the event that $o$ is connected to the boundary of $\Lambda_n$ and certain well-chosen decision trees $T$. The next step is to interpret the resulting bound as a differential inequality. At this point, it might seem that we have reached an obstacle. Indeed, for the classical random cluster measure, the derivative of $\mu(f)$ with respect to $\beta$ is proportional to $\sum_{xy} \mathrm{Cov}_{\mu} (f, \omega_{xy})$; however, this is no longer true for random cluster measures with random coupling constants. In \cite{Blume-Capel}, this was overcome by introducing a slightly different model for which the derivative is roughly equal to $\sum_{xy} \mathrm{Cov}_{\mu} (f, \omega_{xy})$, and the edge marginal of this new model was then compared to that of the original model. However, such an approach seems specific to the random environment arising when working with the Blume--Capel model. Our approach instead relies on the correlation inequality of Lemma~\ref{lem:cov comparison new} that allows us to directly compare the derivative of $\mu(f)$ with $\sum_{xy} \mathrm{Cov}_{\mu}(f,\omega_{xy})$, thereby yielding a differential inequality from which sharpness of the phase transition can be deduced.

\subsection{Paper organisation}

In Section~\ref{sec:preliminaries}, we define the spin model on finite graphs and in infinite volume, and state its regularity properties together with several correlation inequalities. In Section~\ref{section: random cluster}, we introduce the random cluster representation and prove some correlation inequalities that will be useful throughout the paper. We then prove the non-triviality of the phase transition in Section~\ref{sec:non-triviality}. We next turn to the proof of Theorem~\ref{thm: main theorem}. We first establish the OSSS inequality in Section~\ref{section: OSSS}. We then prove subcritical sharpness for the random cluster representation in Section~\ref{sec:sharpness random cluster}. Finally, in Section~\ref{sec:sharpness}, we complete the proof of Theorem~\ref{thm: main theorem}.

\paragraph{Acknowledgements} We would like to thank Trishen Gunaratnam, Dmitrii Krachun, Romain Panis and Franco Severo for useful discussions. CP was supported by an EPSRC New Investigator Award (UKRI1019).

\section{Preliminaries}\label{sec:preliminaries}

In this section we introduce the spin model and collect the regularity and correlation inequalities that will be used throughout the paper.

\subsection{Definition of the spin model}\label{section: definitions}
Recall that $G=(V, E)$ is an infinite, connected, locally finite, transitive graph and that $d_G$ denotes the graph distance in $G$. We will write elements of $E$ in the form $xy$ (though we still consider them as undirected edges) and denote by $D$ the degree of each vertex in $G$. For $x \in V, k \geq 0$ let $\Lambda_k(x) = \{y \in V : d_G(x, y) \leq k\}$, and write $\Lambda_k = \Lambda_k(o)$.

We say that a Borel measure $\rho$ on $\mathbb R$ is an \emph{admissible single-site measure} if it satisfies the following properties:
\begin{enumerate}
    \item[$(i)$] $\rho$ is \emph{even}, which means that for every Borel measurable set $A$, $\rho(A)=\rho(-A)$. 
    \item[$(ii)$] $\rho$ has \emph{super-Gaussian} tails, in the sense that
    \begin{equation*}
    \forall  a > 0 \quad \int_{\mathbb{R}} e^{a |u|^{2}} \mathrm{d} \rho(u) < \infty.
    \end{equation*}
    \item[$(iii)$] $\rho$ has non-trivial support, i.e.\ $\rho(\mathbb{R}\setminus\{0\})>0$.
\end{enumerate}
Assumption (ii) above is necessary for the model to be well-defined for all $\beta \geq 0$ due to the quadratic nature of the interactions $\beta \varphi_x \varphi_y$, while (i) allows us to define the random cluster representation (see Definition \ref{Def: random cluster}), and (iii) ensures that the model is non-degenerate.
We now define the spin model at finite volume. We state the definition for nearest-neighbour interactions, but our methods also apply to the model with finite-range ferromagnetic interactions that are invariant under a group acting transitively on $V$. 
\begin{definition}
    Let $\Lambda$ be a finite subset of $V$, $\beta \geq 0$, $\rho$ an admissible single-site measure, and $\eta \in \mathbb{R}^{V}$.
    The finite-volume spin model on $\Lambda$ at inverse temperature $\beta \geq 0$ with single-site measure $\rho$ and boundary conditions $\eta$ is the measure $\nu_{\Lambda, \beta, \rho}^{\eta}$ on $\mathbb{R}^{\Lambda}$ whose density at a spin configuration $\varphi \in \mathbb{R}^{\Lambda}$ is given by
\begin{equation*}
    \mathrm{d} \nu_{\Lambda, \beta, \rho}^{\eta}[\varphi] = \frac{1}{Z_{\Lambda, \beta, \rho}^{\eta}}\exp(- \beta H_{\Lambda}^{\eta}(\varphi)) \prod_{x \in \Lambda} \mathrm{d} \rho(\varphi_x),
\end{equation*}
where the partition function $Z_{\Lambda, \beta, \rho}^{\eta}$ is the normalising constant that makes $\nu_{\Lambda, \beta, \rho}^{\eta}$ a probability measure, and $H_{\Lambda}^{\eta}(\varphi)$ is the Hamiltonian, given by
\begin{equation*}
    H_{\Lambda}^{\eta}(\varphi) = - \sum_{\substack{xy \in E\\x, y \in \Lambda}} \varphi_x \varphi_y - \sum_{\substack{xy \in E \\ x \in \Lambda, \, y\in V \setminus \Lambda}} \varphi_x \eta_y.
\end{equation*}
\end{definition}
\noindent We will sometimes write $\langle \cdot \rangle_{\Lambda, \beta, \rho}^{\eta}$ for the expectation with respect to the measure $\nu_{\Lambda, \beta, \rho}^{\eta}$, and will frequently drop $\rho$ from the notation.

This framework includes several well-studied models in statistical physics:
\begin{enumerate}
    \item[$(i)$] The Ising and $\varphi^4$ models, by choosing, respectively
    \begin{equation*}
        \rho=\delta_{-1}+\delta_1, \qquad \mathrm{d}\rho(\varphi_x)=\exp(-g\varphi_x^4-a\varphi_x^2)\mathrm{d}\varphi_x,
    \end{equation*}
    where for $t\in \mathbb R$, $\delta_t$ is the Dirac measure at $t$, and where $g>0$ and $a\in \mathbb R$.
   \item[$(ii)$] The Blume--Capel model, by choosing
    \begin{equation*}
    \rho=\exp(\Delta)\delta_{-1}+\delta_0+\exp(\Delta)\delta_1,
    \end{equation*}
    where $\Delta\in \mathbb R$.
    \item[$(iii)$] General $P(\varphi)$ models, by choosing
    \begin{equation*}
    \mathrm{d}\rho(\varphi_x)=\exp(-P(\varphi_x))\mathrm{d}\varphi_x,
    \end{equation*}
    where $P$ is an even polynomial of degree at least $4$ and of positive leading coefficient.
\end{enumerate}

\subsection{Infinite-volume measures and regularity}
\label{section: infinite-volume}

We now introduce infinite-volume Gibbs measures via the DLR equation.
\begin{definition}
    \label{def: Gibbs}
    Let $\beta$ and $\rho$ be fixed.
    An infinite-volume Gibbs measure is a probability measure $\nu$ on $\mathbb{R}^{V}$, with the $\sigma-$algebra generated by Borel events depending on finitely many vertices, such that for any finite $\Lambda \subset V$ and any bounded measurable function $g: \mathbb{R}^{\Lambda} \rightarrow \mathbb{R}$, the DLR equation holds:
    \begin{equation*}
        \nu[g] = \int_{\eta \in \mathbb{R}^{V}} \langle g\rangle _{\Lambda, \beta, \rho}^{\eta} \mathrm{d} \nu(\eta).
    \end{equation*} 
\end{definition}

Understanding the properties and structure of the set of Gibbs measures is a key question in the study of the model. In contrast to the Ising model, a potential difficulty arises in the case of unbounded spins when working with arbitrary Gibbs measures, since it is a priori possible that the measure is supported on configurations that grow very rapidly.
To avoid such pathological measures, we restrict to \emph{regular} infinite-volume Gibbs measures. To define this class of measures, the important observation is that the finite-volume measure $\nu_{\Lambda, \beta, \rho}^{\eta}$ satisfies a regularity property which allows us to bound the Radon-Nikodym derivative of the system in a finite subset $\Lambda \subset V$ with respect to a product measure, up to constants that depend on the boundary conditions $\eta$. Provided that $\eta$ grows at most exponentially in the distance from the origin, this allows us to prove weak convergence of $\nu_{\Lambda, \beta, \rho}^{\eta}$ as $\Lambda \nearrow V$ to an infinite-volume Gibbs measure that satisfies a similar regularity property. Moreover, for fixed $\beta, \rho$ any positive boundary conditions growing sufficiently quickly produce the same measure in the infinite-volume limit, which we call the (infinite-volume) plus measure and denote by $\nu_{\beta, \rho}^{+}$.

We now define some notation needed to state the regularity result. Consider a finite subset $\Lambda \subset V$ and define the interior and exterior boundary $\partial \Lambda = \{x \in \Lambda : d_G(x, V \setminus \Lambda) = 1\}$ and $\partial^{\mathrm{ext}} \Lambda = \{x \in V \setminus \Lambda : d_G(x, \Lambda) = 1\}$.  For $x \in \Lambda, \eta \in \mathbb{R}^{V}$, and $a>0$, let
\begin{equation*}
    A(x, \Lambda, \eta, \beta, a) = \max \left\{1, \max_{y \in \partial^{\mathrm{ext}} \Lambda} |\eta_y| \left(\frac{2D\beta}{a}\right)^{d_G(x,y)} \right\}.
\end{equation*}
The function $A$ can be thought of as representing the influence of the boundary conditions $\eta$ on the value of the spin at $x$, similar to the Cameron-Martin formula for Gaussian fields. Note that if we take $a>2D\beta$, then the term $\left(\frac{2D\beta}{a}\right)^{d_G(x,y)}$ above decays exponentially fast.  
Thus for any $\varepsilon > 0$ and any boundary conditions that grow at most exponentially fast, we can choose $a$ to ensure that we have $A(x, \Lambda_n, \eta, \beta, a) = 1$ for all $n \geq 0$ and all $x \in \Lambda_n$ with $d_G(x, \Lambda_n) > \varepsilon n$.

We first state regularity for the finite-volume spin model, which follows from \cite[Theorem 4.1]{PV26}.
See also \cite{Lebowitz_Presutti, random_tangled} for related regularity results. Below $\rho_a$ is the single-site measure defined by $\mathrm{d} \rho_a(u) = e^{au^2} \mathrm{d} \rho(u)$, which is admissible if $\rho$ is admissible.

\begin{proposition}
    \label{prop: regularity}
    For any $\beta \geq 0$ and $a \geq 2 D \beta$, there exists $B > 0$ such that for any finite $\Lambda \subset V$, $\Lambda' \subset \Lambda$, $\psi \in \mathbb{R}^{\Lambda'}$, and any boundary conditions $\eta \in \mathbb{R}^{V}$,
    \begin{equation}
        \label{eq: regularity statement}
        \mathrm{d} \nu_{\Lambda, \beta, \rho}^{\eta}[\varphi|_{\Lambda'} = \psi] \leq \prod_{x \in \Lambda'} e^{BA(x, \Lambda, \eta, \beta, a)^2} \mathrm{d} \nu_{\Lambda', 0, \rho_{a}}^{0}[\psi].
    \end{equation}
\end{proposition}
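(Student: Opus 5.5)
This proposition is stated as following from \cite[Theorem 4.1]{PV26}, so the plan is to reduce \eqref{eq: regularity statement} to that result and sketch the Ruelle-type mechanism behind it.

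\textbf{Step 1: trade the interaction for single-site weights.} For any $\varphi\in\mathbb{R}^\Lambda$ we use $\beta\varphi_x\varphi_y\le \frac{\beta}{2}(\varphi_x^2+\varphi_y^2)$ on internal edges. On boundary edges we use
\[
\beta\varphi_x\eta_y \le \frac{a}{2D}\,\varphi_x^2 + \frac{D\beta^2}{2a}\,\eta_y^2 .
\]
Since each vertex has degree $D$ and $a\ge 2D\beta$, the total quadratic weight at each site is at most $a\varphi_x^2$. This comparison alone is too lossy, however. It charges $\eta_y^2$ to the neighbour of $y$ with no decay, and it gives no gain from the normalisation. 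So we instead aim for an inductive (Ruelle-type) estimate on the conditional density.

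\textbf{Step 2: one-site conditional bound.} Fix $x\in\Lambda$ and condition on all other spins. The density of $\varphi_x$ is proportional to $\exp(\beta\varphi_x h_x)\,\mathrm{d}\rho(\varphi_x)$, where $h_x=\sum_{y\sim x}\varphi_y$ (with $\eta$ outside $\Lambda$). The normaliser is at least $\rho$-mass near $0$, since $\rho$ is even, and $\cosh\ge1$. The numerator is bounded by $e^{a\varphi_x^2}e^{\beta^2h_x^2/(4a)}$. Thus the conditional density of $\varphi_x$ with respect to $\rho_a$ is at most $C\exp(\beta^2h_x^2/(4a))$. Since $h_x^2\le D\sum_{y\sim x}\varphi_y^2$, this factor propagates the influence of neighbours with contraction factor of order $(2D\beta/a)^2$ per step.

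\textbf{Step 3: iterate and integrate out $\Lambda\setminus\Lambda'$.} We would prove by induction on the size of $\Lambda$ the following bound. The joint density on $\Lambda'$ with respect to $\prod\rho_a$ is bounded by $\prod_{x\in\Lambda'}\exp\big(B\,A(x,\Lambda,\eta,\beta,a)^2\big)$. This requires a Gaussian-domination step: we must control $\langle e^{\epsilon\varphi_y^2}\rangle$ under the measure conditioned outside $y$, using the super-Gaussian tails (ii), uniformly in $\eta$. The bound for $y$ in terms of boundary data decays like $|\eta_z|(2D\beta/a)^{d_G(y,z)}$, which gives exactly $A$.

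\textbf{Main obstacle.} The hard part is closing the induction with a constant $B$ uniform in $\Lambda$, i.e.\ summing the geometric contributions over paths without blowup. The difficulty is sharpest in the borderline case $a=2D\beta$, where the contraction is not strict. That is precisely the content of \cite[Theorem 4.1]{PV26}, which I would invoke directly for the constants. The remaining check is that our notation ($\rho_a$, $A$, free product measure $\nu^0_{\Lambda',0,\rho_a}$) matches theirs.
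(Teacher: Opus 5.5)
Your proposal matches the paper. The paper gives no independent argument here and simply records the statement as a consequence of \cite[Theorem 4.1]{PV26}, so your reduction to that theorem, after matching the notation for $\rho_a$, $A$ and $\nu^0_{\Lambda',0,\rho_a}$, is exactly the intended proof. Your Ruelle-type sketch in Steps 1--3 is only heuristic, since the uniform-in-$\Lambda$ induction of Step 3 is precisely what is being outsourced to \cite{PV26}, but that is no different from how the paper itself treats this proposition.
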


Let us remark that the measure $\nu_{\Lambda', 0, \rho_{a}}^{0}$ above is a product measure, since $\beta=0$. Motivated by \eqref{eq: regularity statement}, we define regularity for infinite-volume measures as follows.

\begin{definition}
    We say an infinite-volume measure $\nu$ on $\mathbb{R}^{V}$ is \emph{regular} (with respect to $\rho$) if there exist $a, B > 0$ such that for any finite $\Lambda \subset V$ and any $\psi \in \mathbb{R}^{\Lambda}$,
    \begin{equation}
        \label{eq: infinite-vol regularity}
        \mathrm{d} \nu[\varphi|_{\Lambda} = \psi] \leq e^{B} \mathrm{d} \nu_{\Lambda, 0, \rho_{a}}^{0}[\psi].
    \end{equation}
\end{definition}

We now introduce the plus measure, which can be thought of as the maximal regular infinite-volume Gibbs measure. Recall that $\Lambda_k$ denotes the ball of radius $k$ in graph distance and let $\eta^{+} \in \mathbb{R}^{V}$ be given by $\eta^{+}_{x} = \sqrt{\mathrm{\log}(|\Lambda_{d_{G}(o,x)}|)}$. It was proved in \cite[Proposition 5.8]{PV26} that the measures $\nu_{\Lambda, \beta, \rho}^{\eta^{+}}$ converge weakly to a regular Gibbs measure as $\Lambda \nearrow V$, so we can define the plus measure
\begin{equation*}
    \nu_{\beta, \rho}^{+} \coloneq \lim_{\Lambda \nearrow V} \nu_{\Lambda, \beta, \rho}^{\eta^{+}}.
\end{equation*}
We mention that an alternative way to define $\nu_{\beta, \rho}^{+}$ by taking the limit of finite-volume measures with free boundary conditions and a shifted single-site measure at the boundary of $\Lambda$ was considered in \cite[Section 5.4]{PV26}. An advantage of this alternative construction is that the finite-volume measures are regular up to the boundary.

In the next proposition, we state some useful properties of the plus measure. Below, we say that $\mu$ is \textit{stochastically dominated} by $\nu$, and write $\mu \preceq \nu$, if $\mu[f] \leq \nu[f]$ for any increasing function $f$.
\begin{proposition}
    \label{Prop: plus measure spin model}
    Let $\beta \geq 0$ and let $\rho$ be an admissible single-site measure.
    \begin{enumerate}[(i)]
        \item Any regular infinite-volume Gibbs measure is stochastically dominated by $\nu^{+}_{\beta, \rho}$.
        \item For any boundary conditions $\eta \in \mathbb{R}^{V}$ such that there exists $\lambda \in \mathbb{R}$ with $\eta^{+}_x \leq \eta_x \leq e^{\lambda d_{G}(o, x)}$ for all $x \in V$,
        $\nu_{\Lambda, \beta, \rho}^{\eta} \rightarrow \nu_{\beta, \rho}^{+}$ as $\Lambda \nearrow V$.
    \end{enumerate}
\end{proposition}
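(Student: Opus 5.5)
The plan is to derive both parts from monotonicity in boundary conditions (Griffiths/FKG-type), the regularity estimate of Proposition~\ref{prop: regularity}, and the convergence of $\nu^{\eta^+}_{\Lambda,\beta,\rho}$ to $\nu^+_{\beta,\rho}$ from \cite[Proposition 5.8]{PV26}.

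The basic tool is the FKG lattice condition. The Hamiltonian is ferromagnetic: the interaction $\beta\varphi_x\varphi_y$ is supermodular and the single-site measures are products. Hence $\nu^{\eta}_{\Lambda,\beta,\rho}$ satisfies the FKG lattice condition on $\mathbb{R}^\Lambda$. Moreover, $\eta\le\eta'$ pointwise implies $\nu^{\eta}_{\Lambda}\preceq\nu^{\eta'}_{\Lambda}$, since the density ratio $\exp(\beta\sum_{x\in\Lambda,y\notin\Lambda,xy\in E}\varphi_x(\eta'_y-\eta_y))$ is increasing in $\varphi$ (Holley). The same argument gives domain monotonicity: for $\Lambda\subset\Lambda'$, the measure $\nu^{\eta}_{\Lambda'}$ restricted to $\Lambda$ is a mixture over $\xi$ of $\nu^{\xi}_{\Lambda}$.

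\textbf{Part (ii).} Fix $\eta$ with $\eta^+\le\eta\le e^{\lambda d_G(o,\cdot)}$ and an increasing local bounded function $f$ supported in $\Lambda_k$. The lower bound $\nu^{\eta}_\Lambda[f]\ge\nu^{\eta^+}_\Lambda[f]\to\nu^+[f]$ is immediate from boundary monotonicity.

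For the upper bound, note that the boundary values grow only exponentially. Pick $a>2D\beta e^{\lambda}$ large enough so that $A(x,\Lambda_n,\eta,\beta,a)=1$ for $x$ at distance $>\epsilon n$ from the boundary, as noted before Proposition~\ref{prop: regularity}. Take $m$ with $k\ll m\ll n$ and use the DLR/Markov property: $\nu^{\eta}_{\Lambda_n}[f]=\nu^{\eta}_{\Lambda_n}\big[\langle f\rangle^{\xi}_{\Lambda_m}\big]$, where $\xi=\varphi|_{\Lambda_n\setminus\Lambda_m}$. The aim is to compare with $\eta^+$ on $\Lambda_m$. By regularity, $\xi$ near $\partial\Lambda_m$ is stochastically controlled by the product measure $\rho_a$, which has super-Gaussian tails. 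Therefore, with probability tending to one, $|\xi_y|\le\eta^+_y=\sqrt{\log|\Lambda_{d(o,y)}|}$ for all $y$ in a suitable shell. This uses a union bound on the shell, since $\rho_a$-tails beat $e^{-C t^2}$ for every $C$.

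On that good event, monotonicity gives $\langle f\rangle^{\xi}_{\Lambda_m}\le\langle f\rangle^{\eta^+}_{\Lambda_m}$. The problem is that $\xi$ is only controlled on a shell and not everywhere outside $\Lambda_m$. To handle this, I would use the Markov property with a random outermost shell. Alternatively, I would apply the bound to $\tilde\eta:=\max(\xi,\eta^+)$, noting that only the neighbours of $\Lambda_m$ matter for $\langle\cdot\rangle_{\Lambda_m}$. On the bad event I bound $|f|\le\|f\|_\infty$. This gives $\limsup_n\nu^\eta_{\Lambda_n}[f]\le\nu^+[f]+o(1)$, the error vanishing as $m\to\infty$.

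Increasing local functions determine the limit by a standard argument. Tightness comes from Proposition~\ref{prop: regularity}, and monotone-coupling considerations then yield weak convergence.

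\textbf{Part (i).} Let $\nu$ be a regular Gibbs measure and $f$ increasing and local. By DLR, $\nu[f]=\int\langle f\rangle^{\xi}_{\Lambda_m}\,d\nu(\xi)$. Regularity \eqref{eq: infinite-vol regularity} bounds the law of $\xi$ on $\partial^{\mathrm{ext}}\Lambda_m$ by $e^B$ times a $\rho_a$-product. The same union bound then shows $\nu(\exists y\in\partial^{\mathrm{ext}}\Lambda_m:|\xi_y|>\eta^+_y)\to0$. On the complement, monotonicity gives $\langle f\rangle^{\xi}_{\Lambda_m}\le\langle f\rangle^{\eta^+}_{\Lambda_m}$. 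Letting $m\to\infty$ yields $\nu[f]\le\nu^+[f]$.

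\textbf{Main obstacle.} The key estimate is the union bound. We need $|\partial^{\mathrm{ext}}\Lambda_m|\cdot\sup_y\rho_a(|u|>\eta^+_y)\to0$ with $\eta^+_y=\sqrt{\log|\Lambda_{m+1}|}$. Because $\rho_a$ has super-Gaussian tails, $\rho_a(|u|>t)\le C_K e^{-Kt^2}$ for any $K$. This gives a bound of order $|\Lambda_{m+1}|^{1-K}\to0$, which is exactly why $\eta^+$ is chosen with this square-root-log growth. A secondary subtlety is that $\langle\cdot\rangle_{\Lambda_m}$ depends only on boundary values adjacent to $\Lambda_m$, so replacing $\xi$ there by $\eta^+$ is legitimate.
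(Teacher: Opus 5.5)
Your proof is correct, but it is organised differently from the paper's.

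\textbf{The paper's route.} The paper does not prove (i); it cites \cite[Proposition 5.8]{PV26}. It then gets (ii) by compactness. Regularity gives tightness of $(\nu^{\eta}_{\Lambda,\beta,\rho})_{\Lambda}$, and every subsequential limit is a regular Gibbs measure. By (i) such a limit is dominated by $\nu^{+}_{\beta,\rho}$, while monotonicity in boundary conditions gives the reverse domination.

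\textbf{Your route.} You prove (i) directly. You use DLR on $\Lambda_m$, then single-site regularity and a union bound showing that the spins on $\partial^{\mathrm{ext}}\Lambda_m$ lie below $\eta^{+}$ with probability $1-O(|\Lambda_{m+1}|^{1-K})$. You finish with monotonicity in boundary conditions and $\nu^{\eta^{+}}_{\Lambda_m}\to\nu^{+}_{\beta,\rho}$. For the upper bound in (ii), you run the same estimate inside the finite volume. This uses the domain Markov property of $\nu^{\eta}_{\Lambda}$ at an intermediate scale and Proposition~\ref{prop: regularity}, where $A(y,\Lambda,\eta,\beta,a)\to 1$ for fixed $y$ because $\eta$ grows at most exponentially. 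So (i) and (ii) become two instances of one estimate, and (ii) no longer depends on (i).

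\textbf{Comparison.} Your version is self-contained. It also avoids having to check that subsequential weak limits satisfy the DLR equation. With unbounded spins and unbounded boundary interaction, that check requires controlling the boundary term in the limit, again via regularity, and the paper treats it in one line. In your argument the DLR equation is used only at finite volume, where it is exact. The paper's route is shorter given the citation. It also makes the conceptual point that (ii) is a consequence of the maximality of $\nu^{+}_{\beta,\rho}$ among regular Gibbs measures.

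\textbf{Two minor remarks.}
\begin{enumerate}
\item Your concern that $\xi$ is controlled ``only on a shell'' is not a real issue, since $\langle\cdot\rangle^{\xi}_{\Lambda_m}$ depends only on $\xi|_{\partial^{\mathrm{ext}}\Lambda_m}$. The random outermost shell is therefore unnecessary.
\item When you let $m\to\infty$, take $f$ continuous as well as increasing and local. Weak convergence $\nu^{\eta^{+}}_{\Lambda_m}\to\nu^{+}_{\beta,\rho}$ is only available for such $f$, and these functions suffice to determine stochastic domination.
\end{enumerate}
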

\begin{proof}
    See \cite[Proposition 5.8]{PV26} for a proof of (i).
    For (ii), notice that the assumption on $\eta$ implies that for any fixed $x \in V$ and any $a$ large enough, $A(x, \Lambda, \eta, \beta, a) \rightarrow 1$ as $\Lambda \nearrow V$, so by \eqref{eq: regularity statement} the family of measures $(\nu_{\Lambda, \beta, \rho}^{\eta})_{\Lambda \Subset V}$ is tight. Any subsequential limit $\nu$ is regular by \eqref{eq: regularity statement} and is a Gibbs measure by the domain Markov property for the measures $\nu_{\Lambda, \beta, \rho}^{\eta}$, so $\nu \preceq \nu_{\beta, \rho}^{+}$ by (i). On the other hand, $\nu_{\Lambda, \beta, \rho}^{\eta} \succeq \nu_{\Lambda, \beta, \rho}^{\eta^+}$ for all finite $\Lambda \subset V$ by monotonicity in boundary conditions (see Proposition \ref{prop:stoc_monotonicity} below), so $\nu \succeq \nu_{\beta, \rho}^{+}$. We thus obtain $\nu_{\Lambda, \beta, \rho}^{\eta}[H] \rightarrow \nu_{\beta, \rho}^{+}[H]$ for any increasing event $H$ depending on finitely many spins, and full convergence follows because these events generate the $\sigma-$algebra.
\end{proof}

\subsection{Correlation inequalities}\label{sec:correlation_ineq}

We now present various classical correlation inequalities that will be used in this paper (see \cite{random_tangled} and references therein for proofs and background). We begin with correlations of increasing functions.

\begin{proposition}[FKG inequalities, {\cite[Propositions~B.1--.2]{random_tangled} and \cite[Corollary~6.4]{LammersOtt2021}}] \label{prop: spin model FKG} Let $\Lambda \subset V$ be finite, $\beta\geq0$,  and $\eta\in \mathbb R^{V}$. Then, for any increasing and bounded functions $F,G:\mathbb{R}^\Lambda\to \mathbb{R}$,
\begin{equation*}
    \langle F(\varphi)G(\varphi)\rangle^{\eta}_{\Lambda,\beta}\geq \langle F(\varphi)\rangle^{\eta}_{\Lambda,\beta}\langle G(\varphi)\rangle^{\eta}_{\Lambda,\beta}.
\end{equation*}
The same holds for the absolute value field if $\eta_x\geq 0$ for every $x \in \Lambda$, namely
\begin{equation*}
    \langle F(|\varphi|)G(|\varphi|) \rangle^{\eta}_{\Lambda,\beta}\geq \langle F(|\varphi|) \rangle^{\eta}_{\Lambda,\beta} \langle G(|\varphi|) \rangle^{\eta}_{\Lambda,\beta}.
\end{equation*}
\end{proposition}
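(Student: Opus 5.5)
The plan is to derive both inequalities from the classical lattice-condition (Holley/FKG) criterion, after approximating $\rho$ by measures with finite support.

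\textbf{Reduction to finite state spaces.} Fix $\Lambda$, $\beta$ and $\eta$. Choose even, finitely supported measures $\rho_N$ converging weakly to $\rho$, with uniformly super-Gaussian tails. For instance, truncate $\rho$ to $[-N,N]$, discretise symmetrically on a grid of mesh $1/N$, and assign to each grid point the $\rho$-mass of the corresponding cell. The Gibbs weights $\exp(-\beta H^\eta_\Lambda)$ are continuous. The tails are controlled uniformly by assumption (ii) on $\rho$. Together these give $\nu^{\eta}_{\Lambda,\beta,\rho_N}\to\nu^{\eta}_{\Lambda,\beta,\rho}$ weakly. First prove the inequalities for continuous bounded increasing $F,G$ and pass to the limit. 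General bounded increasing functions then follow from a standard approximation: increasing functions can be approximated $\nu$-a.e. by continuous increasing ones, or one can reduce to indicators of increasing events.

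\textbf{Spin FKG.} For a finitely supported $\rho_N$, the measure on the finite distributive lattice $(\mathrm{supp}\,\rho_N)^\Lambda \subset \mathbb{R}^\Lambda$ with the coordinatewise order has weight
\[
\pi(\varphi)=\exp\Bigl(\beta\sum_{xy}\varphi_x\varphi_y+\beta\sum\varphi_x\eta_y\Bigr)\prod_x \rho_N(\{\varphi_x\}).
\]
The product part is a product measure, the boundary term is a sum of single-site terms, and each pair interaction is supermodular, since $\varphi_x\varphi_y$ satisfies $(\varphi\vee\psi)_x(\varphi\vee\psi)_y+(\varphi\wedge\psi)_x(\varphi\wedge\psi)_y\ge \varphi_x\varphi_y+\psi_x\psi_y$. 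Hence $\pi$ satisfies the FKG lattice condition, and the Holley/FKG theorem gives positive association. Note that no sign condition on $\eta$ is needed here.

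\textbf{Absolute value FKG.} This is the main step. Integrate out the signs: write $\varphi_x=\sigma_x a_x$ with $a_x\ge0$. By evenness of $\rho_N$, the law of $a=|\varphi|$ is proportional to
\[
\prod_x \tilde\rho_N(\mathrm{d}a_x)\; W(a), \qquad W(a)=\sum_{\sigma\in\{\pm1\}^\Lambda}\exp\Bigl(\beta\sum_{xy}\sigma_x\sigma_y a_xa_y+\beta\sum \sigma_x a_x\eta_y\Bigr),
\]
which is an Ising partition function with couplings $J_{xy}=\beta a_xa_y$ and fields $h_x=\beta a_x\sum_{y}\eta_y\ge0$ (for the relevant neighbours $y$), where we use the hypothesis that the boundary values are nonnegative.

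We need $\log W$ to be supermodular in $a$ on the lattice $\mathbb{R}_+^\Lambda$. It suffices to check that the mixed second derivatives $\partial_{a_u}\partial_{a_v}\log W$ are nonnegative for $u\ne v$; for the finitely supported case one works on the product of finite chains and uses a smooth extension. By the chain rule, these derivatives are combinations of Ising truncated correlations $\langle\sigma_x\sigma_y;\sigma_{x'}\sigma_{y'}\rangle$ and $\langle \sigma_x\sigma_y;\sigma_{u}\rangle$, with nonnegative coefficients such as $\beta a_y$, $\beta a_{y'}$ and $\beta\eta$, plus a first-order term $\beta\langle\sigma_u\sigma_v\rangle$ when $uv$ is an edge. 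Griffiths' second inequality (GKS-II), valid for ferromagnetic couplings and nonnegative fields, makes all of these nonnegative. This gives the lattice condition for the $a$-marginal, so FKG applies; the approximation argument then concludes as before.

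The main obstacle is this supermodularity computation and keeping track of where $\eta\ge0$ is used. As an alternative, the literature (Lammers--Ott) proves the statement via monotonicity of the conditional law of $|\varphi_x|$ given the other absolute values, which is also a route I would keep in reserve.
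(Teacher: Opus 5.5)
Your proposal is correct. The lattice condition with the supermodular pair term $\beta\varphi_x\varphi_y$ gives the first inequality. For $|\varphi|$, your mixed-derivative computation of $\log W$ is right: GKS-I handles the term $\beta\langle\sigma_u\sigma_v\rangle$, GKS-II handles the truncated correlations, and $\eta\geq 0$ on the boundary is exactly what makes the Ising fields $\beta a_x h_x$ nonnegative. The paper gives no proof of its own and simply cites Gunaratnam--Panagiotis--Panis--Severo and Lammers--Ott; your argument (log-supermodularity of the sign-field Ising partition function via Griffiths' inequalities, plus a routine approximation) is, as far as I recall, the standard route to those results.
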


As a consequence of the FKG inequalities above, we obtain the following monotonicity properties of correlations: the first inequality is standard, and the second inequality follows from a straightforward adaptation of the proof of \cite[Lemma 2.13]{random_tangled}. 

\begin{proposition}\label{prop:stoc_monotonicity}
Let $\Lambda \subset V$ be finite, and $\beta\geq0$. Let also $\eta_1,\eta_2\in \mathbb R^{V}$ be such that $\eta_1\leq \eta_2$. Then, for any increasing function $F:\mathbb{R}^{\Lambda}\to\mathbb{R}$ we have 
\begin{equation*}
\langle F(\varphi) \rangle^{\eta_1}_{\Lambda,\beta}\leq \langle F(\varphi) \rangle^{\eta_2}_{\Lambda,\beta}.  
\end{equation*}
The same holds for the absolute value field if $|\eta_1| \leq \eta_2$, namely for any increasing function $F: (\mathbb{R}^+)^{\Lambda}\to\mathbb{R}$ we have 
\begin{equation*}
\langle F(|\varphi|) \rangle^{\eta_1}_{\Lambda,\beta}\leq \langle F(|\varphi|) \rangle^{\eta_2}_{\Lambda,\beta}. 
\end{equation*}
\end{proposition}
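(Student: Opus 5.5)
The plan is to derive both inequalities from the FKG inequalities of Proposition~\ref{prop: spin model FKG}, by viewing the boundary field as an exponential tilt of a single reference measure. Fix $\Lambda$, $\beta$, and $\eta_1 \le \eta_2$. For $t\in[0,1]$ set $\eta^t = \eta_1 + t(\eta_2-\eta_1)$. Then
\[
\frac{\mathrm{d}\nu^{\eta^t}_{\Lambda,\beta}}{\mathrm{d}\nu^{\eta_1}_{\Lambda,\beta}}(\varphi) \propto \exp\Bigl(\beta t\sum_{xy\in E,\, x\in\Lambda,\, y\notin\Lambda} \varphi_x(\eta_2-\eta_1)_y\Bigr),
\]
and the exponent is an increasing function $W$ of $\varphi$, since every coefficient $(\eta_2-\eta_1)_y$ is nonnegative. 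First suppose $F$ is bounded. Then
\[
\frac{\mathrm{d}}{\mathrm{d}t}\langle F\rangle^{\eta^t}_{\Lambda,\beta} = \mathrm{Cov}^{\eta^t}_{\Lambda,\beta}(F,W).
\]
Differentiation under the integral is justified by the super-Gaussian tails of $\rho$. The covariance is nonnegative by FKG. Since $W$ is unbounded, we apply FKG to the truncations $W\wedge M\vee(-M)$, which are bounded and increasing, and pass to the limit by dominated convergence, again using the super-Gaussian tails. Integrating over $t\in[0,1]$ gives the first claim. A general increasing $F$ is handled by the truncations $F\wedge M\vee(-M)$ and monotone convergence, assuming integrability. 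Alternatively, one can skip the derivative and write $\langle F\rangle^{\eta_2} = \langle F e^{W}\rangle^{\eta_1}/\langle e^{W}\rangle^{\eta_1}$, which is at least $\langle F\rangle^{\eta_1}$ by a single application of FKG to $F$ and $e^{W}$.

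For the absolute value field, we first reduce to $\eta_1\ge0$. Condition on $\mathsf a=|\varphi|$. The signs then form an Ising model with couplings $\beta\mathsf a_x\mathsf a_y$ and fields $h_x=\beta\mathsf a_x\sum_{y\notin\Lambda,\, y\sim x}(\eta_1)_y$. The marginal density of $\mathsf a$ is therefore proportional to
\[
Z^{\mathrm{Ising}}(\mathsf a,h)\prod_{x\in\Lambda}\mathrm{d}\rho|_{\mathbb R^+}(\mathsf a_x).
\]
By the GHS/Griffiths-type fact that the Ising partition function $\sum_\sigma \exp\bigl(\sum J\sigma\sigma+\sum h\sigma\bigr)$, expanded in $\prod\cosh$ form, is increasing in each $|h_x|$, the law of $|\varphi|$ under $\eta_1$ is dominated by its law under $|\eta_1|$. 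Here one must check that this domination is a stochastic domination of $\mathsf a$, not merely a comparison of partition functions. The plan for that is to write the density ratio
\[
\frac{Z(\mathsf a,|h|)}{Z(\mathsf a,h)}
\]
and show it is increasing in $\mathsf a$. If this fails directly, the fallback is to follow the argument of \cite[Lemma 2.13]{random_tangled}.

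With $0\le|\eta_1|\le\eta_2$, interpolate $\eta^t$ from $|\eta_1|$ to $\eta_2$ as before. The tilt now involves the sign field. Integrating out the signs at fixed $\mathsf a$, the density of $\mathsf a$ under $\eta^t$ is proportional to $Z^{\mathrm{Ising}}(\mathsf a,h^t)$, with $h^t$ nonnegative and increasing in $t$. The derivative in $t$ equals
\[
\mathrm{Cov}\bigl(F(\mathsf a),\,\textstyle\sum_x \partial_t h^t_x\,\langle\sigma_x\rangle_{\mathsf a,h^t}\,\bigr),
\]
where the covariance is taken in the absolute value marginal and the partial derivative $\partial_t h^t_x$ is proportional to $\mathsf a_x$. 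The function $\mathsf a_x\langle\sigma_x\rangle_{\mathsf a,h^t}$ is increasing in $\mathsf a$: increasing $\mathsf a$ increases both the couplings and the fields, so Griffiths' second inequality shows $\langle\sigma_x\rangle\ge0$ is increasing. The absolute-value FKG inequality of Proposition~\ref{prop: spin model FKG}, which applies because $\eta^t\ge0$, then makes the covariance nonnegative.

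The main obstacle is this last monotonicity of $\mathsf a\mapsto \mathsf a_x\langle\sigma_x\rangle_{\mathsf a,h}$, together with the reduction from $\eta_1$ to $|\eta_1|$. Both rest on Griffiths inequalities for the conditional Ising model with random couplings. These are classical, but they need care because the couplings and fields depend jointly on $\mathsf a$.
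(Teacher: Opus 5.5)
\textbf{Verdict: there is a genuine gap.} It lies in the reduction from a signed $\eta_1$ to $|\eta_1|$, which is where the content of the second statement lies.

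What is already correct: your first inequality (tilting by $e^{W}$ and applying FKG, with truncation) is the standard argument the paper has in mind. Your interpolation for nonnegative boundary conditions $0\le|\eta_1|\le\eta_2$ is also correct. Of the two obstacles you flag, the monotonicity of $\mathsf{a}\mapsto\mathsf{a}_x\langle\sigma_x\rangle_{\mathsf{a},h^t}$ follows at once from Griffiths I and II, since $h^t\ge0$.

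\textbf{Why the fact you invoke is not enough.} You leave the reduction as a plan, with a fallback to the very lemma the paper cites. The fact you invoke only compares partition functions pointwise: $Z(\mathsf{a},h)\le Z(\mathsf{a},h')$ for $|h|\le h'$. (Also, $Z$ is not a function of the $|h_x|$: two coupled spins give different values for fields $(h,h)$ and $(h,-h)$.) A pointwise comparison of unnormalised densities gives no stochastic ordering.

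\textbf{What you actually need.} Write $s_z(\eta)=\sum_{y\notin\Lambda,\,y\sim z}\eta_y$ and $h(\eta)_z=\beta\mathsf{a}_z s_z(\eta)$. Set $R(\mathsf{a})=Z(\mathsf{a},h(\eta_1))/Z(\mathsf{a},h(\eta'))$ with $\eta'=|\eta_1|$. You need $R$ to be decreasing in $\mathsf{a}$, and
\[
\partial_{\mathsf{a}_z}\log R=\beta\sum_{w\in\Lambda,\,w\sim z}\mathsf{a}_w\bigl(\langle\sigma_z\sigma_w\rangle_{h(\eta_1)}-\langle\sigma_z\sigma_w\rangle_{h(\eta')}\bigr)+\beta\bigl(s_z(\eta_1)\langle\sigma_z\rangle_{h(\eta_1)}-s_z(\eta')\langle\sigma_z\rangle_{h(\eta')}\bigr).
\]
Controlling this sign means comparing Ising correlations at a signed field with those at a dominating nonnegative field. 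Neither Griffiths II nor GHS provides this.

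The missing tool is Ginibre's inequality: for $J\ge0$ and $|h|\le h'$,
\[
|\langle\sigma_A\rangle_{J,h}|\le\langle\sigma_A\rangle_{J,h'}.
\]
This is the Lebowitz-type inequality the paper uses for Proposition~\ref{prop:monotonicity in bc}. It follows from Griffiths I after the substitution $\sigma'=t\sigma$ in the duplicated system.

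Direction also matters. Absolute-value FKG is only available under nonnegative boundary conditions. So you must view the $\eta_1$-law of $|\varphi|$ as the $\eta'$-law tilted by the decreasing factor $R$, and apply FKG under $\eta'$. The reverse (tilting the $\eta_1$-law and using FKG under $\eta_1$) is not available.

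\textbf{A one-step argument.} Once Ginibre's inequality is available, your two-step structure is unnecessary. The same display with $\eta'=\eta_2$ is nonpositive, because $|h(\eta_1)|\le h(\eta_2)$ (as $|s_z(\eta_1)|\le s_z(\eta_2)$). Hence $\langle\sigma_z\sigma_w\rangle_{h(\eta_2)}\ge\langle\sigma_z\sigma_w\rangle_{h(\eta_1)}$ and $s_z(\eta_2)\langle\sigma_z\rangle_{h(\eta_2)}\ge|s_z(\eta_1)|\,|\langle\sigma_z\rangle_{h(\eta_1)}|$. Then
\[
\langle F(|\varphi|)\rangle^{\eta_1}=\frac{\langle F R\rangle^{\eta_2}}{\langle R\rangle^{\eta_2}}\le\langle F(|\varphi|)\rangle^{\eta_2}
\]
by absolute-value FKG under $\eta_2\ge0$; here $R\le1$ by the partition-function comparison, so integrability is not an issue. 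The paper offers only a pointer to \cite[Lemma 2.13]{random_tangled}. This single density-ratio comparison is a natural way to carry that out, and your interpolation step becomes a special case of it.
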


We now turn to spin correlations. In what follows, given $\Lambda\subset V$ finite and $A:\Lambda\rightarrow \mathbb{N}$, we write $\varphi_A\coloneqq \prod_{x\in \Lambda} \varphi_x^{A_x}$.

\begin{proposition}\label{prop:monotonicity in bc} Let $A: \Lambda\rightarrow \mathbb{N}$. Then, for any $\eta_1,\eta_2\in \mathbb R^{V}$ such that $|\eta_1|\leq \eta_2$,
\begin{equation*}
    \langle \varphi_A\rangle^{\eta_1}_{\Lambda,\beta}\leq \langle \varphi_A\rangle^{\eta_2}_{\Lambda,\beta}.
\end{equation*}
\end{proposition}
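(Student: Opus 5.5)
We prove Proposition~\ref{prop:monotonicity in bc} with a Griffiths-type argument: the dependence on the boundary conditions is moved into a linear external field, and we show that $\langle\varphi_A\rangle$ is increasing in that field and even in its sign.

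\textbf{Step 1: rewriting the boundary term as a field.} For $x\in\Lambda$, set
\[
h_x(\eta)=\beta\sum_{y\notin\Lambda,\,y\sim x}\eta_y .
\]
Then
\[
\langle\varphi_A\rangle^{\eta}_{\Lambda,\beta}=\frac{1}{Z}\int\varphi_A\,e^{\beta\sum_{xy\subset\Lambda}\varphi_x\varphi_y+\sum_x h_x\varphi_x}\prod_{x\in\Lambda}\mathrm d\rho(\varphi_x).
\]
The hypothesis $|\eta_1|\le\eta_2$ gives $|h(\eta_1)|\le h(\eta_1')\le h(\eta_2)$ coordinatewise, where $\eta_1'=|\eta_1|$. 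It therefore suffices to prove two facts about $F(h):=\langle\varphi_A\rangle_h$:
\begin{enumerate}[(a)]
\item $F(h)\le F(|h|)$ for every field $h$;
\item $F$ is nondecreasing in each coordinate on $\{h\ge 0\}$.
\end{enumerate}
Given these, we conclude
\[
F(h(\eta_1))\le F(|h(\eta_1)|)\le F(h(\eta_2)).
\]

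\textbf{Step 2: monotonicity on nonnegative fields (b).} We have
\[
\partial_{h_z}F=\langle\varphi_A\varphi_z\rangle_h-\langle\varphi_A\rangle_h\langle\varphi_z\rangle_h.
\]
This is nonnegative by the second Griffiths (GKS) inequality for fields $h\ge0$. GKS holds for even single-site measures through the standard duplicated-variable argument. Write $\varphi=(q+t)/\sqrt2$ and $\varphi'=(q-t)/\sqrt2$; expand both weights in power series, using that all couplings and fields are nonnegative; then use the fact that for an even $\rho$ the product $\rho\otimes\rho$ gives nonnegative mixed moments to the monomials in $q,t$ that arise. The super-Gaussian tails of $\rho$ (assumption (ii)) justify exchanging the series with the integrals. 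Alternatively, one can deduce GKS from the random cluster/sign representation of Section~\ref{section: random cluster}, which has not yet been reached at this point in the paper.

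\textbf{Step 3: sign symmetry (a).} We expand $e^{\sum_x h_x\varphi_x}$ in powers of $h$:
\[
Z_h\,F(h)=\sum_{n}\frac{h^n}{n!}\,Z_0\,\langle\varphi_{A+n}\rangle_0,
\qquad
Z_h=\sum_{n}\frac{h^n}{n!}\,Z_0\,\langle\varphi_{n}\rangle_0 .
\]
By GKS I at zero field, every coefficient $\langle\varphi_{A+n}\rangle_0$ and $\langle\varphi_n\rangle_0$ is nonnegative. By evenness of $\rho$, only multi-indices with the right parities contribute. Replacing $h$ by $|h|$ therefore increases the numerator termwise. The denominator also increases, so this termwise comparison alone is inadequate. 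The cleanest fix is again duplication: write
\[
F(|h|)-F(h)=\frac{\langle\varphi_A\rangle'_{|h|}\otimes\langle\cdot\rangle'_{h}-\cdots}{\cdots},
\]
that is, express $Z_{|h|}Z_h\bigl(F(|h|)-F(h)\bigr)$ as
\[
\int\bigl(\varphi_A-\varphi'_A\bigr)\,e^{\sum_x |h_x|\varphi_x+h_x\varphi'_x+\dots}\,\mathrm d\rho^{\otimes}\,\mathrm d\rho^{\otimes}.
\]
Then use the flip $\varphi'_x\mapsto\operatorname{sgn}(h_x)\varphi'_x$. Because $\rho$ is even, this flip is measure-preserving, but it does not preserve the interaction. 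A simpler route is a continuity/interpolation argument: take $h^s=(1-s)h+s|h|$ and note that only coordinates with $h_x<0$ move. Since this path passes through sign changes of the field, it cannot be handled by (b) alone. Instead, we plan to reduce to the case where all of $h$ is of one sign on each component via the Ginibre-type inequality
\[
\langle\varphi_A\varphi'_B\pm\dots\rangle\ge0,
\]
namely $\langle\varphi_A\rangle_{|h|}\ge|\langle\varphi_A\rangle_h|$. This is known to hold for even single-site measures in the Ginibre class.

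\textbf{Main obstacle.} The main obstacle is Step 3 for general even $\rho$. Ginibre's condition requires the duplicated moments $\int(u\pm v)^m\dots$ to be nonnegative, which holds for all even $\rho$ only in the form needed for GKS. If the direct Ginibre route fails, we would fall back on the random cluster representation with the absolute value field: condition on $\mathsf a=|\varphi|$, so that signs form an Ising model with couplings $\beta\mathsf a_x\mathsf a_y$ and fields $h_x\mathsf a_x$. For the Ising model, $|\langle\sigma_A\rangle_h|\le\langle\sigma_A\rangle_{|h|}$ is classical. We then combine this with the second part of Proposition~\ref{prop:stoc_monotonicity}, which says that the law of $\mathsf a$ under $|\eta_1|$ is dominated by that under $\eta_2$, together with the fact that the conditional Ising correlation is increasing in $\mathsf a$ (by GKS II). This settles the comparison.
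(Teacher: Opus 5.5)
Your fallback in the last paragraph is correct and is essentially the paper's proof. You condition on $\sfa=|\varphi|$ and compare the conditional Ising correlations; your two steps $\langle\sigma_A\rangle_h\le\langle\sigma_A\rangle_{|h|}\le\langle\sigma_A\rangle_{h'}$ together are exactly the Lebowitz inequality for $|h|\le h'$ that the paper invokes. You then use that $\sfa\mapsto\sfa_A\langle\sigma_A\mid\sfa\rangle^{\eta_2}$ is increasing by GKS (since $\eta_2\ge0$), and conclude with the second part of Proposition~\ref{prop:stoc_monotonicity}. One correction to your wording: the outer average is over the law of $\sfa$ under $\eta_1$, not under $|\eta_1|$. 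These two laws differ in general, because the conditional Ising partition function depends on the signs of $\eta_1$. What Proposition~\ref{prop:stoc_monotonicity} gives under $|\eta_1|\le\eta_2$ is precisely domination of the $\eta_1$-law by the $\eta_2$-law, which is what the chain needs.

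The obstacle you describe in Step~3 is not real, however. Your primary duplicated-variable route closes in one step for every even $\rho$, with no need to split into (a) and (b). Set $h=h(\eta_1)$ and $h'=h(\eta_2)$, and write $Z_hZ_{h'}\bigl(\langle\varphi_A\rangle_{h'}-\langle\varphi_A\rangle_h\bigr)$ as the duplicated integral of $\varphi_A-\varphi'_A$ against the weight with field $h'$ on $\varphi$ and $h$ on $\varphi'$. In the variables $q=(\varphi+\varphi')/\sqrt2$, $t=(\varphi-\varphi')/\sqrt2$ the exponent is
\[
\beta\sum_{xy}(q_xq_y+t_xt_y)+\sum_x\Bigl(\tfrac{h'_x+h_x}{\sqrt2}\,q_x+\tfrac{h'_x-h_x}{\sqrt2}\,t_x\Bigr),
\]
whose coefficients are nonnegative exactly because $|h|\le h'$. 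In particular, in your own Step~3 integrand, a site with $h_x<0$ contributes $|h_x|(\varphi_x-\varphi'_x)=\sqrt2\,|h_x|\,t_x$, so no sign flip is needed. Also, $\varphi_A-\varphi'_A$ is a polynomial in $(q,t)$ with nonnegative coefficients. The moment $\iint q^mt^n\,\mathrm d\rho\,\mathrm d\rho$ vanishes unless $m$ and $n$ are both even (by the symmetries $\varphi\leftrightarrow\varphi'$ and $(\varphi,\varphi')\mapsto(-\varphi,-\varphi')$), and is nonnegative otherwise. Hence every term of the expansion, which converges absolutely by the super-Gaussian tails, is nonnegative. This uses only the single-site positivity you already invoke for GKS~II, so no special Ginibre class is involved. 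Your claim that this positivity holds ``only in the form needed for GKS'' is therefore mistaken. This route is self-contained and bypasses both the sign/absolute-value decomposition and Proposition~\ref{prop:stoc_monotonicity}. The paper instead reduces to classical Ising inequalities and reuses the absolute-value monotonicity it has already established.
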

\begin{proof}
Let $\textup{sgn}(t)=\mathbbm{1}_{t\geq 0}-\mathbbm{1}_{t<0}$ and write $\sigma_x=\textup{sgn}(\varphi_x)$. Conditionally on $|\varphi|$, the sign field $(\sigma_x)_{x\in \Lambda}$ is distributed according to an Ising model. For the Ising model, the statement of the proposition follows from the monotonicity inequality of Lebowitz \cite{Lebowitz1977CoexistencePhasesIsing}, i.e.\ for any $\psi\in (\mathbb{R}^+)^{\Lambda}$
\[
\langle \sigma_A \mid |\varphi|=\psi\rangle^{\eta_1}_{\Lambda,\beta}\leq \langle \sigma_A \mid |\varphi|=\psi\rangle^{\eta_2}_{\Lambda,\beta}.
\]
Moreover, $\langle \sigma_A \mid |\varphi|=\psi\rangle^{\eta_2}_{\Lambda,\beta}$ is an increasing function of $\psi$ by monotonicity in the coupling constants, since $\eta_2\geq 0$. Then we use the monotonicity in the absolute value field of Proposition~\ref{prop:stoc_monotonicity} to conclude that 
\[
\langle \varphi_A\rangle^{\eta_1}_{\Lambda,\beta}\leq \langle |\varphi|_A \langle \sigma_A\mid |\varphi|\rangle^{\eta_2}_{\Lambda,\beta}\rangle^{\eta_1}_{\Lambda,\beta}\leq \langle |\varphi|_A \langle \sigma_A\mid |\varphi|\rangle^{\eta_2}_{\Lambda,\beta}\rangle^{\eta_2}_{\Lambda,\beta}=\langle \varphi_A\rangle^{\eta_2}_{\Lambda,\beta},
\]
as desired.
\end{proof}

\section{Random cluster measure}\label{section: random cluster}

In this section, we introduce the random cluster representation of our spin models, the Edwards--Sokal coupling that relates them, and certain correlation inequalities that will be useful throughout the paper.

\subsection{Definition of the random cluster measure}

Consider a finite subset $\Lambda \subset V$ and let $\overline{E}(\Lambda) = \{xy \in E : x \in \Lambda \text{ or } y \in \Lambda\}$. Recall that $\partial^{\mathrm{ext}} \Lambda = \{x \in V \setminus \Lambda : d_G(x, \Lambda) = 1\}$ and set $\overline{\Lambda} = \Lambda \cup \partial^{\mathrm{ext}}\Lambda$. We define boundary conditions on $\Lambda$ as a pair $(\xi, \sfb)$, where $\xi = \{\xi_1, \ldots, \xi_{|\xi|}\}$ is a partition of $\partial^{\mathrm{ext}} \Lambda$, and $\sfb \in (\mathbb{R}^{+})^{\partial^{\mathrm{ext}} \Lambda}$.
We call the connected components of the random subgraph $(\overline{\Lambda}, \{e \in \overline{E}(\Lambda) : \omega_e = 1\})$ the clusters of $\omega$, where we identify any clusters that contain vertices in the same element of the partition $\xi$. Denote by $k^{\xi}(\omega)$ the number of such clusters. We say $x$ is connected to $y$ and write $x \xleftrightarrow{\omega} y$ (or just $x \longleftrightarrow y$) if $x$ and $y$ are in the same cluster of $\omega$.

A random cluster representation for the $\varphi^{4}$ model on $\mathbb{Z}^{d}$ was introduced in \cite{well_behaved}, and it can be defined in the same way for the spin model on $G$ with any admissible single-site measure.

\begin{definition}
\label{Def: random cluster}
Let $\Lambda \subset V$ be finite with boundary conditions $(\xi, \sfb)$ on $\Lambda$, $\beta \geq 0$, and $\rho$ an admissible single-site measure.
The random cluster measure on $\Lambda$ is the probability measure $\Psi_{\Lambda, \beta, \rho}^{(\xi, \sfb)}$ on pairs $(\sfa, \omega) \in (\mathbb{R}^{+})^{\overline{\Lambda}} \times \{0,1\}^{\overline{E}(\Lambda)}$ satisfying $\sfa_x = \sfb_x$ for all $x \in \partial^{\mathrm{ext}} \Lambda$ defined by
\begin{equation*}
    \mathrm{d} \Psi_{\Lambda, \beta, \rho}^{(\xi, \sfb)}[(\mathsf{a}, \omega)] = \frac{\mathbbm{1}_{\{\sfa|_{\partial^{\mathrm{ext}} \Lambda} = \sfb\}}2^{k^{\xi}(\omega)}}{Z_{\Lambda, \beta, \rho}^{(\xi, \sfb)}} \left( \prod_{xy \in \overline{E}(\Lambda)} \sqrt{1-p_{xy}(\mathsf{a})} \left(\frac{p_{xy}(\mathsf{a})}{1-p_{xy}(\mathsf{a})}\right)^{\omega_{xy}}\right) \prod_{x \in \Lambda} \mathrm{d} \rho(\sfa_x),
\end{equation*}
where $p_{xy}(\mathsf{a}) = 1 - \exp(-2\beta \mathsf{a}_x \mathsf{a}_y)$ and $Z_{\Lambda, \beta, \rho}^{(\xi, \sfb)}$ is the normalising constant that makes $\Psi_{\Lambda, \beta, \rho}^{(\xi, \sfb)}$ a probability measure. We will write $\chi_{\Lambda, \beta, \rho}^{(\xi, \sfb)}$ and $\Phi_{\Lambda, \beta, \rho}^{(\xi, \sfb)}$ for the marginal distributions of $\sfa$ and $\omega$ respectively. As for the spin model we will often omit $\rho$ from the notation. We also allow the boundary field $\sfb$ to be an element of $(\mathbb{R}^{+})^{V}$ instead of $(\mathbb{R}^{+})^{\partial^{\mathrm{ext}} \Lambda}$, as this is more convenient when defining boundary conditions for a family of measures with varying $\Lambda$. 
\end{definition}

We define two special kinds of boundary conditions: the free boundary conditions $(\xi, \sfb)$ with $\xi = \{ \{x\} : x \in \partial^{\mathrm{ext}} \Lambda\}$ and $\sfb \equiv 0$, and the wired boundary conditions with $\xi = \{\partial^{\mathrm{ext}} \Lambda\}$. We write $\Psi_{\Lambda, \beta}^0$ for the random cluster measure with free boundary conditions and $\Psi_{\Lambda, \beta}^{(w, \sfb)}$ for the random cluster measure with wired boundary conditions. 

As observed in \cite{well_behaved}, it follows from Definition \ref{Def: random cluster} that $\Psi^{(\xi, \sfb)}_{\Lambda, \beta}$ satisfies the domain Markov property when conditioning on the value of $\sfa$ outside a set of vertices $\Lambda' \subset \Lambda$ and the value of $\omega$ on edges with both endpoints outside $\Lambda'$,
meaning that
for every $\sfa' \in (\mathbb{R}^{+})^{\Lambda'}, \mathsf{a}'' \in (\mathbb{R}^{+})^{\overline{\Lambda} \setminus\Lambda'}$ and every $\omega' \in \{0, 1\}^{\overline{E}(\Lambda')}, \omega'' \in \{0, 1\}^{\overline{E}(\Lambda) \setminus \overline{E}(\Lambda')}$, we have
\begin{equation}
    \label{eq: DMP}
    \mathrm{d} \Psi^{(\xi, \sfb)}_{\Lambda, \beta} \left[\sfa|_{\Lambda'} = \sfa', \omega|_{\overline{E}(\Lambda')} = \omega' \ \Big| \ \sfa|_{\overline{\Lambda} \setminus \Lambda'} = \sfa'', \omega|_{\overline{E}(\Lambda) \setminus \overline{E}(\Lambda')} = \omega'' \right] = \mathrm{d} \Psi_{\Lambda', \beta}^{(\xi^{\omega''}, \sfa'')}[(\sfa', \omega')],
\end{equation}
where $\xi^{\omega''}$ is the partition of $\partial^{\mathrm{ext}} \Lambda'$ in which two vertices are in the same element if and only if they are in the same cluster of $\omega''$ in the graph $(\overline{\Lambda} \setminus \Lambda', \overline{E}(\Lambda) \setminus \overline{E}(\Lambda'))$ after identifying clusters containing vertices in the same element of $\xi$.

\subsection{Edwards--Sokal coupling}
In this section, we introduce the coupling between the spin model and the random cluster representation. We first describe informally the idea behind the coupling, before stating the formal result in Proposition \ref{prop: coupling} below.

Consider $\varphi \sim \nu_{\Lambda, \beta}^{\eta}$, where the boundary conditions $\eta$ are non-negative, and let $\sfa_x = |\varphi_x|$ and $\sigma_x = \mathrm{sgn}(\varphi_x)$. The assumption that $\rho$ is even means that conditionally on the absolute value field $\sfa$, $\sigma$ is distributed according to an Ising model on $\Lambda$ with random coupling constants $J_{x,y} = \sfa_x \sfa_y$. Using the Edwards--Sokal coupling \cite{Edwards-Sokal} between the Ising model and FK random cluster percolation, we can sample a percolation configuration $\omega$ by setting $\omega_{xy} = 0$ for any edge $xy$ such that $\sigma_x \neq \sigma_y$, and $\omega_{xy} = 1$ with probability $p_{xy}(\sfa)$ independently of all other edges for edges $xy$ with $\sigma_x = \sigma_y$. The random cluster measure $\Psi_{\Lambda, \beta}^{(w, \eta)}$ is defined so that the resulting pair $(\sfa, \omega)$ will be distributed according to $\Psi_{\Lambda, \beta}^{(w, \eta)}$.

Let us now define the Ising and random cluster models on a subgraph $H = (V(H), E(H))$ of $G$ with inhomogeneous coupling constants. Let $\beta \geq 0$, $\sfa \in (\mathbb{R}^{+})^{V(H)}$, and let $\xi$ be a partition of $\partial H \coloneq \{x \in V(H) : xy \in E \text{ for some } y \in V \setminus V(H)\}$. Recall that $p_{xy}(\sfa) = 1-e^{-2 \beta \sfa_x \sfa_y}$ and $k^{\xi}(\omega)$ denotes the number of clusters in $\omega$ after identifying any clusters that contain vertices in the same element of the partition $\xi$. The random cluster model on $H$ with coupling constants given by $\sfa \in (\mathbb{R}^{+})^{V(H)}$ is the probability measure  $\phi^{\xi}_{H, \beta, \sfa}$ on configurations $\omega \in \{0, 1\}^{E(H)}$ given by

\begin{equation*}
    \phi^{\xi}_{H, \beta, \sfa}[\omega] = \frac{1}{Z^{\mathrm{RC}, \xi}_{H, \beta, \sfa}} \prod_{xy \in E(H)} \left( \frac{p_{xy}(\sfa)}{1 - p_{xy}(\sfa)} \right)^{\omega_{xy}} 2^{k^{\xi}(\omega)},
\end{equation*}
where $Z^{\mathrm{RC}, \xi}_{H, \beta, \sfa}$ is a normalising constant.

The Ising model $\langle \cdot \rangle_{H, \beta, \sfa}^{\mathrm{Ising}, \xi}$ on $H$ is the probability measure on configurations $\sigma \in \{0, 1\}^{V(H)}$ given by
\begin{equation*}
    \langle \sigma \rangle_{H, \beta, \sfa}^{\mathrm{Ising}, \xi}
    = \frac{\mathbbm{1}_{\{\sigma \sim_{\mathrm{ext}} \xi\}}}{Z^{\mathrm{Ising}, \xi}_{H, \beta, \sfa}} \prod_{xy \in E(H)} e^{\beta \sfa_x \sfa_y \sigma_x \sigma_y},
\end{equation*}
where $Z^{\mathrm{Ising}, \xi}_{H, \beta, \sfa}$ is a normalising constant and $\{\sigma \sim_{\mathrm{ext}} \xi\}$ is the event that $\sigma_x$ is constant for every vertex $x$ in the same element of $\xi$. If $H = (\overline{\Lambda}, \overline{E}(\Lambda))$, then we abuse notation and write $\Lambda$ instead of $H$ in the subscripts.

It follows from the Edwards--Sokal coupling that we have the following relation between the partition functions:
\begin{equation}
    \label{eq: partition functions}
    Z^{\mathrm{Ising}, \xi}_{H, \beta, \sfa} = Z^{\mathrm{RC}, \xi}_{H, \beta, \sfa} \prod_{xy \in E(H)} e^{- \beta \sfa_x \sfa_y}.
\end{equation}
Using this relation, we obtain the following coupling between the spin model and its random cluster representation.
\begin{proposition}[{\cite[Proposition 4.6]{well_behaved}}]
    \label{prop: coupling}
    Let $\Lambda \subset V$ finite, $\beta \geq 0$, and $(\xi, \sfb)$ boundary conditions on $\Lambda$. Then
    \begin{equation*}
        \mathrm{d} \Psi_{\Lambda, \beta}^{(\xi, \sfb)}[(\sfa, \omega)] = \phi_{\Lambda, \beta, \sfa}^{\xi}[\omega] \mathrm{d} \chi_{\Lambda, \beta}^{(\xi, \sfb)}[\sfa],
    \end{equation*}
    and
    \begin{equation*}
        \mathrm{d} \chi_{\Lambda, \beta}^{(\xi, \sfb)}[\sfa] = 
        \mathbbm{1}_{\{\sfa|_{\partial^{\mathrm{ext}} \Lambda} = \sfb\}}
        \frac{Z^{\mathrm{Ising}, \xi}_{\Lambda, \beta, \sfa}}{Z_{\Lambda, \beta}^{(\xi, \sfb)}} \prod_{x \in \Lambda} \mathrm{d} \rho(\sfa_x).
    \end{equation*}
    Moreover, $\chi_{\Lambda, \beta}^{(w, \sfb)}$ is the law of the absolute value field under $\nu_{\Lambda, \beta}^{\sfb}$.
\end{proposition}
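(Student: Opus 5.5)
The plan is to factor the density in Definition~\ref{Def: random cluster} and integrate out $\omega$ for a fixed absolute value field $\sfa$. The first step is to note that $\sqrt{1-p_{xy}(\sfa)}=e^{-\beta\sfa_x\sfa_y}$. With this, the density of $\Psi^{(\xi,\sfb)}_{\Lambda,\beta}$ at $(\sfa,\omega)$ becomes
\[
\frac{\mathbbm{1}_{\{\sfa|_{\partial^{\mathrm{ext}}\Lambda}=\sfb\}}}{Z^{(\xi,\sfb)}_{\Lambda,\beta}}\Big(\prod_{xy\in\overline{E}(\Lambda)}e^{-\beta\sfa_x\sfa_y}\Big)\Big(2^{k^{\xi}(\omega)}\prod_{xy\in\overline{E}(\Lambda)}\Big(\tfrac{p_{xy}(\sfa)}{1-p_{xy}(\sfa)}\Big)^{\omega_{xy}}\Big)\prod_{x\in\Lambda}\mathrm{d}\rho(\sfa_x).
\]
The second bracket is exactly $Z^{\mathrm{RC},\xi}_{\Lambda,\beta,\sfa}\,\phi^{\xi}_{\Lambda,\beta,\sfa}[\omega]$. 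Summing over $\omega$ therefore shows that the $\sfa$-marginal has density
\[
\mathbbm{1}_{\{\sfa|_{\partial^{\mathrm{ext}}\Lambda}=\sfb\}}\,Z^{\mathrm{RC},\xi}_{\Lambda,\beta,\sfa}\prod_{xy}e^{-\beta\sfa_x\sfa_y}\,\big/\,Z^{(\xi,\sfb)}_{\Lambda,\beta}.
\]
By the partition function identity \eqref{eq: partition functions}, this equals the claimed expression $Z^{\mathrm{Ising},\xi}_{\Lambda,\beta,\sfa}/Z^{(\xi,\sfb)}_{\Lambda,\beta}$ times $\prod_x\mathrm{d}\rho(\sfa_x)$. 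Dividing the joint density by this marginal gives the conditional law $\phi^{\xi}_{\Lambda,\beta,\sfa}[\omega]$, which proves the first two identities.

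For the last claim, I would compute the law of $|\varphi|$ under $\nu^{\sfb}_{\Lambda,\beta}$ directly. Take a bounded measurable $F$ on $(\mathbb{R}^+)^\Lambda$ and write $\varphi_x=\sigma_x\sfa_x$ with $\sfa_x=|\varphi_x|$. Since $\rho$ is even, for any $g$ we have
\[
\int g(u)\,\mathrm{d}\rho(u)=\int_{[0,\infty)}\tfrac12\sum_{s=\pm1}g(s a)\,\mathrm{d}\tilde\rho(a),
\]
where $\tilde\rho=2\rho|_{(0,\infty)}+\rho(\{0\})\delta_0$ is the pushforward of $\rho$ under $|\cdot|$. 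This identity holds at the atom at $0$ as well, because there both signs give the same value.

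Applying this in every coordinate, $\langle F(|\varphi|)\rangle^{\sfb}_{\Lambda,\beta}$ becomes, up to the factor $2^{-|\Lambda|}$, an integral of $F(\sfa)$ against $\prod_x\mathrm{d}\tilde\rho(\sfa_x)$. The integrand is
\[
\sum_{\sigma\in\{\pm1\}^\Lambda}\exp\Big(\beta\sum_{x,y\in\Lambda}\sfa_x\sfa_y\sigma_x\sigma_y+\beta\sum_{x\in\Lambda,\,y\notin\Lambda}\sfa_x\sfb_y\sigma_x\Big).
\]
The sign sum is the Ising partition function on $\overline\Lambda$ with couplings $\sfa_x\sfa_y$ and with the boundary spins fixed to $+1$. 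It therefore equals $\tfrac12 Z^{\mathrm{Ising},w}_{\Lambda,\beta,\sfa}$ evaluated at $\sfa|_{\partial^{\mathrm{ext}}\Lambda}=\sfb$. Indeed, for wired boundary conditions the partition $\xi=\{\partial^{\mathrm{ext}}\Lambda\}$ forces all boundary spins to be equal, and the global flip $\sigma\mapsto-\sigma$ shows that the all-$-$ boundary contributes the same as the all-$+$ boundary.

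Consequently the law of $|\varphi|$ has density proportional to $\mathbbm{1}_{\{\sfa|_{\partial^{\mathrm{ext}}\Lambda}=\sfb\}}Z^{\mathrm{Ising},w}_{\Lambda,\beta,\sfa}\prod_x\mathrm{d}\rho(\sfa_x)$. All constants, namely the powers of $2$ and the ratio between $\tilde\rho$ and $\rho$ on $\mathbb{R}^+$, are absorbed by normalisation. This requires interpreting $\mathrm{d}\rho(\sfa_x)$ on $\mathbb{R}^+$ in Definition~\ref{Def: random cluster} consistently, as $\tilde\rho$ (equivalently, up to constant factors on each piece). By the second identity this is $\chi^{(w,\sfb)}_{\Lambda,\beta}$, since both are probability measures.

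The argument is essentially bookkeeping, and I expect no real obstacle. The only delicate point is the treatment of the possible atom of $\rho$ at $0$, and the matching normalisation between $\rho$ on $\mathbb{R}$ and the measure used on $\mathbb{R}^+$. I would handle this through the pushforward $\tilde\rho$ as above and check that the constants cancel after normalising.
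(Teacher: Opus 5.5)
Your proposal is correct and follows the route the paper indicates. You factor out $\prod_{xy}e^{-\beta\sfa_x\sfa_y}$, recognise $Z^{\mathrm{RC},\xi}_{\Lambda,\beta,\sfa}\,\phi^{\xi}_{\Lambda,\beta,\sfa}[\omega]$, apply \eqref{eq: partition functions}, and obtain the last claim by unfolding the sign field as in the Edwards--Sokal description before the proposition.

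One correction to your bookkeeping. When $\rho(\{0\})>0$ (e.g.\ Blume--Capel), the ratio $\mathrm{d}\tilde\rho/\mathrm{d}\rho|_{[0,\infty)}$ is $2$ on $(0,\infty)$ and $1$ at $0$. The product over sites therefore produces the non-constant factor $2^{\#\{x\in\Lambda:\,\sfa_x>0\}}$, which normalisation cannot absorb, and the two pieces cannot be rescaled independently. So the reference measure in Definition~\ref{Def: random cluster} must be a single constant multiple of $\tilde\rho=|\cdot|_{*}\rho$. That is the interpretation you adopt, and it is the one under which the final claim holds.
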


The Edwards--Sokal coupling allows us to write the correlation between $\varphi_x$ and $\varphi_y$ in terms of the probability that $x$ and $y$ are in the same cluster in the random cluster representation, similarly to the case of the Ising and Potts models (see \cite{Grimmett_RC}). We state this property for the measure with wired boundary conditions because that is where we use it later, but the following also holds for the measure with free boundary conditions.

\begin{corollary}
    \label{Cor: coupling}
    Let $\Lambda \subset V$ finite, $\beta \geq 0$, and $\sfb \in (\mathbb{R}^{+})^{\partial^{\mathrm{ext}} \Lambda}$. For every $x,y \in \Lambda,$ we have
    \begin{align*}
        \nu_{\Lambda, \beta}^{\mathsf{b}} [\varphi_x] = \Psi_{\Lambda, \beta}^{(w, \mathsf{b})}[\sfa_x \mathbbm{1}_{x \longleftrightarrow \partial^{\mathrm{ext}}\Lambda}]
    \end{align*}
    and
    \begin{align*}
    \nu_{\Lambda, \beta}^{\mathsf{b}} [\varphi_x \varphi_y] = \Psi_{\Lambda, \beta}^{(w, \mathsf{b})}[\sfa_x \sfa_y \mathbbm{1}_{x \longleftrightarrow y}].
    \end{align*}
\end{corollary}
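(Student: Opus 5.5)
We condition on the absolute value field and reduce both identities to the classical Edwards--Sokal statements for an Ising model with inhomogeneous couplings. By Proposition~\ref{prop: coupling}, the law of $\sfa$ under $\Psi_{\Lambda,\beta}^{(w,\sfb)}$ is $\chi_{\Lambda,\beta}^{(w,\sfb)}$, which coincides with the law of $|\varphi|$ under $\nu_{\Lambda,\beta}^{\sfb}$. Given $\sfa$, the conditional law of $\omega$ is $\phi^{w}_{\Lambda,\beta,\sfa}$. It therefore suffices to show, for every fixed $\sfa$ (with $\sfa|_{\partial^{\mathrm{ext}}\Lambda}=\sfb$), that
\[
\nu_{\Lambda,\beta}^{\sfb}[\sigma_x \mid |\varphi|=\sfa]=\phi^{w}_{\Lambda,\beta,\sfa}[x\longleftrightarrow \partial^{\mathrm{ext}}\Lambda],\qquad
\nu_{\Lambda,\beta}^{\sfb}[\sigma_x\sigma_y \mid |\varphi|=\sfa]=\phi^{w}_{\Lambda,\beta,\sfa}[x\longleftrightarrow y].
\]
Once this is established, multiplying by $\sfa_x$ (respectively $\sfa_x\sfa_y$) and integrating against $\chi_{\Lambda,\beta}^{(w,\sfb)}$ gives both claims, because $\varphi_x=\sfa_x\sigma_x$.

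The first step is to identify the conditional law of the signs. Since $\rho$ is even, the density of $\nu_{\Lambda,\beta}^{\sfb}$ written in terms of $(|\varphi|,\sigma)$ factorises as $\prod_{xy}e^{\beta\sfa_x\sfa_y\sigma_x\sigma_y}$ times a function of $\sfa$. Here the boundary vertices $y\in\partial^{\mathrm{ext}}\Lambda$ carry $\sfa_y=\sfb_y\ge 0$ and the fixed sign $\sigma_y=+1$. Hence, given $\sfa$, the field $\sigma$ is the Ising model $\langle\cdot\rangle^{\mathrm{Ising},w}_{\Lambda,\beta,\sfa}$ with couplings $\sfa_x\sfa_y$, with all boundary spins equal to $+$. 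The wired Ising measure allows the common boundary sign to be $\pm$, so its plus version is obtained by conditioning on $\sigma|_{\partial^{\mathrm{ext}}\Lambda}=+$. The same global-flip argument identifies the normalisation: the sum over configurations with boundary $+$ equals $\tfrac12 Z^{\mathrm{Ising},w}$. This constant is absorbed in $Z_{\Lambda,\beta}^{(w,\sfb)}$ and is consistent with the formula for $\chi$ in Proposition~\ref{prop: coupling}.

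The second step is the standard Edwards--Sokal argument on the joint space. Consider the weight
\[
\prod_{xy}\Big[(1-p_{xy})\mathbbm{1}_{\omega_{xy}=0}+p_{xy}\mathbbm{1}_{\omega_{xy}=1}\mathbbm{1}_{\sigma_x=\sigma_y}\Big],\qquad \sigma\sim_{\mathrm{ext}} w.
\]
Its $\sigma$-marginal is proportional to the Ising weight. Its $\omega$-marginal is proportional to $\prod_{xy} p^{\omega}(1-p)^{1-\omega}\,2^{k^{w}(\omega)}$, which is $\phi^{w}_{\Lambda,\beta,\sfa}$; this is exactly the origin of \eqref{eq: partition functions}. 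Conditionally on $\omega$, the spins are constant on clusters, and the clusters receive independent uniform signs, except that the cluster containing $\partial^{\mathrm{ext}}\Lambda$ receives one common uniform sign.

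We then condition on the boundary sign being $+$. This event has probability $1/2$ for every $\omega$, so conditioning on it leaves the $\omega$-marginal unchanged. It also fixes the sign of the boundary cluster to $+$ and keeps the other clusters independent and uniform. Consequently $E[\sigma_x\mid\omega]=\mathbbm{1}_{x\leftrightarrow\partial^{\mathrm{ext}}\Lambda}$. For the pair, $E[\sigma_x\sigma_y\mid\omega]=1$ if $x\leftrightarrow y$. Otherwise it is $0$, because at least one of $x,y$ then lies in a non-boundary cluster carrying an independent uniform sign. Averaging over $\omega$ yields the two displayed identities.

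We expect the main obstacle to be bookkeeping rather than any genuine difficulty. One must check carefully that the boundary vertices with $\sfb_y=0$ are handled consistently, since then $p_{xy}=0$ and those edges are always closed, which is harmless. One must also check that the normalising constants in Definition~\ref{Def: random cluster} (the factors $\sqrt{1-p}$ and $2^{k}$) match those coming from the spin side. Both checks reduce to the relation \eqref{eq: partition functions} together with the factor $1/2$ from fixing the boundary sign, and Proposition~\ref{prop: coupling} already encodes them.
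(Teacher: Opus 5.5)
Your proposal is correct and follows the route the paper has in mind. The paper gives no separate proof: it deduces the corollary from Proposition~\ref{prop: coupling} (conditionally on $\sfa$, $\omega\sim\phi^{w}_{\Lambda,\beta,\sfa}$ and the signs form a plus-boundary Ising model with couplings $\sfa_x\sfa_y$), combined with the classical Edwards--Sokal identities for the Ising model, exactly as you do.

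The only cosmetic point concerns the event $\{\sfa_x=0\}$. With the paper's convention $\mathrm{sgn}(0)=+1$, your conditional identity would read $1=0$ there. You should either take the sign of $\varphi_x$ uniform on that event, or simply note that it is multiplied by $\sfa_x=0$.
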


\subsection{Infinite-volume measures}
We now define the random cluster measure at infinite volume. Let us first state the definition of an increasing function in the context of the random cluster measure.

\begin{definition}
    \label{Def: increasing}
    We say that $f: (\mathbb{R}^{+})^{\overline{\Lambda}} \times \{0, 1\}^{\overline{E}(\Lambda)} \rightarrow \mathbb{R}$ is an increasing function if for any $\mathsf{a}, \mathsf{a}' \in \mathbb{R}^{\overline{\Lambda}}$ and any $\omega, \omega' \in \{0, 1 \}^{\overline{E}(\Lambda)}$ with $\mathsf{a} \leq \mathsf{a}'$ and $\omega \leq \omega'$ pointwise,
    $f(\mathsf{a}, \omega) \leq f(\mathsf{a}', \omega').$
    We say an event $A$ is an increasing event if $\mathbbm{1}_{A}$ is an increasing function. For measures $\mu, \mu'$ on $(\mathbb{R}^{+})^{\overline{\Lambda}} \times \{0, 1\}^{\overline{E}(\Lambda)}$, we say that $\mu$ is stochastically dominated by $\mu'$ and write $\mu \preceq \mu'$ if $\mu[f] \leq \mu'[f]$ for any bounded increasing function $f$.
\end{definition}

To prove convergence to an infinite-volume measure, we will need to use monotonicity in boundary conditions of the measures $\Psi_{\Lambda, \beta}^{(\xi, \sfb)}$, which was established in \cite[Proposition 4.10]{well_behaved} and which we state in the following proposition. 
\begin{proposition}
    \label{prop: bc monotonicity for RC}
    Let $\Lambda \subset V$ be finite and let $(\xi_1, \sfb_1), (\xi_2, \sfb_2)$ be boundary conditions on $\Lambda$ such that $\sfb_1 \leq \sfb_2$ and each element of the partition $\xi_1$ is a subset of an element in $\xi_2$. Then
    $\Psi_{\Lambda, \beta}^{(\xi_1, \sfb_1)} \preceq \Psi_{\Lambda, \beta}^{(\xi_2, \sfb_2)}$.
\end{proposition}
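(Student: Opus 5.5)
The plan is to disintegrate $\Psi^{(\xi,\sfb)}_{\Lambda,\beta}$ via Proposition~\ref{prop: coupling} into the $\sfa$-marginal $\chi^{(\xi,\sfb)}_{\Lambda,\beta}$ and the conditional law $\phi^{\xi}_{\Lambda,\beta,\sfa}$ of $\omega$. The domination is then proved in two steps: first for the conditional edge measures, then for the absolute value marginals.

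\textbf{Step 1: the conditional edge measures.} Let $f$ be bounded and increasing in $(\sfa,\omega)$, and set $g_\xi(\sfa):=\phi^{\xi}_{\Lambda,\beta,\sfa}[f(\sfa,\cdot)]$. For fixed $\sfa$, $\phi^\xi_{\Lambda,\beta,\sfa}$ is an inhomogeneous FK measure with $q=2$ and edge weights $p_{xy}(\sfa)=1-e^{-2\beta\sfa_x\sfa_y}$, which are increasing in $\sfa$. The classical random cluster theory (FKG lattice condition and Holley's inequality) then gives two facts. First, $\phi^\xi_{\Lambda,\beta,\sfa}$ is stochastically increasing in the edge weights, hence in $\sfa$. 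Second, it is increasing in the boundary partition when $\xi_1$ refines $\xi_2$. Combined with monotonicity of $f$ in $\sfa$, this shows that $g_{\xi}$ is increasing in $\sfa$ and that $g_{\xi_1}\le g_{\xi_2}$ pointwise. Note that $\sfa$ on $\partial^{\mathrm{ext}}\Lambda$ is frozen to $\sfb$, and $g$ is also increasing in these boundary values. Therefore
\[
\Psi^{(\xi_1,\sfb_1)}[f]=\chi^{(\xi_1,\sfb_1)}[g_{\xi_1}]\le \chi^{(\xi_1,\sfb_1)}[g_{\xi_2}],
\]
and it suffices to show $\chi^{(\xi_1,\sfb_1)}\preceq\chi^{(\xi_2,\sfb_2)}$ as measures on $(\mathbb R^+)^\Lambda$. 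We then apply this to the increasing function $g_{\xi_2}(\cdot\,;\sfb_1)\le g_{\xi_2}(\cdot\,;\sfb_2)$.

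\textbf{Step 2: the absolute value marginals.} By Proposition~\ref{prop: coupling}, $\chi^{(\xi,\sfb)}$ has density proportional to $e^{F_{\xi,\sfb}(\sfa)}$ with respect to $\prod_{x\in\Lambda}\mathrm d\rho(\sfa_x)$ restricted to $\mathbb R^+$ (using evenness of $\rho$), where $F_{\xi,\sfb}=\log Z^{\mathrm{Ising},\xi}_{\Lambda,\beta,\sfa}$ and $\sfa=\sfb$ on the boundary. I will verify Holley's criterion in the form of a lattice condition for such tilted product measures on a product of totally ordered spaces. Concretely, I need:
\begin{enumerate}[(a)]
\item $F_{\xi,\sfb}$ is supermodular in $\sfa$;
\item $F_{\xi_2,\sfb_2}-F_{\xi_1,\sfb_1}$ is increasing in $\sfa$.
\end{enumerate}
Both reduce to Ising correlation inequalities after differentiating. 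Indeed,
\[
\partial_{\sfa_x}F_{\xi,\sfb}=\beta\sum_{y\sim x}\sfa_y\langle\sigma_x\sigma_y\rangle^{\mathrm{Ising},\xi}_{\Lambda,\beta,\sfa},
\]
with $\sfa_y=\sfb_y$ for boundary $y$. The coupling constants $\beta\sfa_x\sfa_y$ are nonnegative, and $\langle\sigma_x\sigma_y\rangle$ is increasing in every coupling by GKS~II. This gives the nonnegative mixed derivatives needed for (a). For (b), I use two observations. Passing from $\xi_1$ to $\xi_2$ amounts to adding infinite ferromagnetic couplings between boundary vertices, and raising $\sfb_1$ to $\sfb_2$ raises the couplings on boundary edges. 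By GKS~II both changes increase every $\langle\sigma_x\sigma_y\rangle$, and also increase the explicit prefactor $\sfa_y=\sfb_y$. So $\partial_{\sfa_x}(F_{\xi_2,\sfb_2}-F_{\xi_1,\sfb_1})\ge0$.

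\textbf{Main obstacle.} The main technical obstacle is applying Holley's inequality when the reference measure $\rho$ is a general (possibly atomic, unbounded) measure on $\mathbb R^+$ rather than a finite lattice. I would first discretize: replace each $\sfa_x$ by its value on a finite grid with truncation at a large level, where the lattice condition follows from (a)--(b) on a finite distributive lattice. I would then pass to the limit using the super-Gaussian tails of $\rho$, which give integrability of $e^{F}$ uniformly, together with weak convergence. Alternatively, one can invoke Preston's continuous version of Holley's theorem for measures with supermodular log-density relative to a product of measures on $\mathbb R$.
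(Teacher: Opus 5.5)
Your proof is correct and follows essentially the same route the paper relies on: the paper cites \cite[Proposition 4.10]{well_behaved} and mirrors that argument in the proof of Proposition~\ref{prop: weak monotonicity}. It orders the $\sfa$-marginals using that the ratio of Ising partition functions is increasing in $\sfa$ (checked via logarithmic derivatives) together with FKG for the $\sfa$-marginal, and then lifts to the joint law using monotonicity of the classical random cluster measure in its edge weights and boundary partition.

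The differences are only in packaging. You use GKS~II where the paper uses Edwards--Sokal plus boundary-condition monotonicity for the standard random cluster model. You also invoke a continuous Holley (Preston) theorem, whereas the paper writes $\chi^{(\xi_2,\sfb_2)}$ as an increasing tilt of $\chi^{(\xi_1,\sfb_1)}$ and applies the FKG inequality of Proposition~\ref{prop: FKG}. That FKG inequality is proved coordinate by coordinate by induction, which sidesteps the discretisation issue you flag.
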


Recall from Section \ref{section: infinite-volume} that $\eta^{+}_{x} = \sqrt{\log(|\Lambda_{d_G(o, x)}|)}$. We now show that the measures $\Psi_{\Lambda, \beta}^{(w, \eta^{+})}$ converge weakly as $\Lambda \nearrow V$ to a measure that we call $\Psi_{\beta}^{1}$. Moreover, the wired measure with any boundary field $\sfb$ that grows faster than $\eta^{+}$ but not faster than exponentially will also converge to $\Psi_{\beta}^{1}$.

\begin{proposition}\label{prop: plus convergence}
    For any $\beta \geq 0$, there exists a probability measure $\Psi_{\beta}^{1}$ on $(\mathbb{R}^+)^{V} \times \{0, 1\}^{E}$ such that for any $\lambda \in \mathbb{R}$ and any $\sfb \in \mathbb{R}^{V}$ with $\eta^{+}_x \leq \sfb_x \leq e^{\lambda d_G(o, x)}$ for all $x \in V$,
    \begin{equation*}
        \Psi_{\Lambda, \beta}^{(w, \sfb)} \rightarrow \Psi_{\beta}^{1}
        \text{ as } \Lambda \nearrow V.
    \end{equation*}
\end{proposition}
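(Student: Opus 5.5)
The plan is to copy the proof of Proposition~\ref{Prop: plus measure spin model}(ii) on the random cluster side. There are three steps: a tightness bound for the absolute value field, monotonicity in the volume for wired boundary conditions, and identification of the limit through the boundary field $\eta^{+}$.

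\textbf{Monotonicity in the volume.} Take finite $\Lambda \subset \Lambda'$ and $\sfb \geq 0$. Condition $\Psi_{\Lambda', \beta}^{(w, \sfb)}$ on the absolute value field and the edge states outside $\Lambda$. By the domain Markov property \eqref{eq: DMP}, the conditional law on $\Lambda$ is $\Psi_{\Lambda, \beta}^{(\xi^{\omega''}, \sfa'')}$. Its partition $\xi^{\omega''}$ is finer than the wired one, and its boundary field is $\sfa''|_{\partial^{\mathrm{ext}}\Lambda}$. Proposition~\ref{prop: bc monotonicity for RC} then lets us compare the restriction of $\Psi_{\Lambda', \beta}^{(w, \sfb)}$ to $\Lambda$ with $\Psi_{\Lambda, \beta}^{(w, \sfb')}$ whenever $\sfb'$ pointwise dominates the realised field on $\partial^{\mathrm{ext}}\Lambda$.

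This comparison does not by itself give monotonicity. Unlike the Ising case, the field $\sfa''$ at the boundary of $\Lambda$ is random and not bounded by $\sfb$, so the sequence $\Psi_{\Lambda, \beta}^{(w, \sfb)}$ is not obviously monotone in $\Lambda$. I would therefore not argue monotonicity directly. Instead I would go through the spin model, using Proposition~\ref{prop: coupling} and Proposition~\ref{Prop: plus measure spin model}.

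\textbf{Tightness and the $\sfa$-marginal.} By Proposition~\ref{prop: coupling}, $\chi_{\Lambda, \beta}^{(w, \sfb)}$ is the law of $|\varphi|$ under $\nu_{\Lambda, \beta}^{\sfb}$. Proposition~\ref{Prop: plus measure spin model}(ii) applies because $\eta^{+}\leq \sfb\leq e^{\lambda d_G(o,\cdot)}$, so these laws converge to the law of $|\varphi|$ under $\nu_{\beta}^{+}$. The limit does not depend on $\sfb$. Proposition~\ref{prop: regularity} supplies tightness, and also uniform integrability of $\sfa_x$ on finite sets.

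\textbf{Convergence of the edges.} For the edge configuration, it suffices to show that $\Psi_{\Lambda, \beta}^{(w, \sfb)}[\mathbbm{1}_{\omega|_F \equiv 1}\, g(\sfa|_K)]$ converges for finite $F$, $K$ and bounded continuous $g$, because such functions determine the law. Under the coupling, $\omega$ given $(\sfa, \sigma)$ is obtained by independent $p_{xy}(\sfa)$-coins on agreeing edges. Write $V(F)$ for the set of endpoints of edges in $F$. Then
\[
\Psi_{\Lambda, \beta}^{(w, \sfb)}[\mathbbm{1}_{\omega|_F\equiv 1}\, g(\sfa|_K)] = \nu_{\Lambda,\beta}^{\sfb}\Big[\mathbbm{1}_{\{\sigma \text{ constant on each edge of } F\}} \prod_{xy\in F} p_{xy}(|\varphi|)\, g(|\varphi|_K)\Big].
\]
This holds for edges inside $\Lambda$, which is the case once $\Lambda$ is large. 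The right-hand side is an expectation of a local function of $\varphi$. The indicator is a function of the signs, which can be written via $\mathrm{sgn}(\varphi_x)\mathrm{sgn}(\varphi_y)$, and this is discontinuous only on $\{\varphi_x = 0\}$. There $p_{xy}$ vanishes whenever $x \in V(F)$, so the integrand is continuous at every point. Weak convergence of $\nu_{\Lambda,\beta}^{\sfb}$ to $\nu^+_\beta$ from Proposition~\ref{Prop: plus measure spin model}(ii) therefore gives convergence to a limit independent of $\sfb$.

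This identifies a consistent family of finite-dimensional limits, since the approximating measures are consistent. Kolmogorov's extension theorem, together with the tightness from the second step, then yields the probability measure $\Psi_\beta^{1}$. The step I expect to need the most care is justifying that the integrand is continuous $\nu^+_\beta$-almost everywhere in the presence of atoms at $0$, as in Blume--Capel. Where $\varphi_x=0$ for some $x \in V(F)$, the factor $p_{xy}$ of an edge of $F$ at $x$ vanishes, so the product tends to $0$ and the function is continuous there, which settles it.
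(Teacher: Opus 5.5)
Your argument is correct, but it follows a genuinely different route from the paper's.

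\textbf{The paper's route.} The paper stays on the random-cluster side. For an increasing local event $H$ and $\Lambda'\subset\Lambda$, the domain Markov property and Proposition~\ref{prop: bc monotonicity for RC} give $\Psi^{(w,\sfb)}_{\Lambda,\beta}[H]\le\Psi^{(w,\eta^{+})}_{\Lambda',\beta}[H]+\chi^{(w,\sfb)}_{\Lambda,\beta}[F^{\mathsf{c}}_{\Lambda'}]$. Here $F_{\Lambda'}$ is the event that the realised field on $\partial^{\mathrm{ext}}\Lambda'$ lies below $\eta^{+}$. This error term is exactly the obstruction you identified in your first paragraph, the random and unbounded boundary field. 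The paper controls it using the regularity of $\nu^{+}_{\beta}$, Markov's inequality for $e^{3|\varphi_x|^2}$, and the summability of $\sum_k|\Lambda_k|^{-2}$. Combined with $\Psi^{(w,\eta^{+})}_{\Lambda,\beta}\preceq\Psi^{(w,\sfb)}_{\Lambda,\beta}$, this sandwiches the limits.

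\textbf{Your route.} You move everything to the spin side. By Edwards--Sokal, each $\mathbbm{1}_{\omega|_F\equiv 1}\,g(\sfa|_K)$ becomes a bounded local spin observable. Your observation that $p_{xy}(|\varphi|)$ vanishes exactly where the sign indicator can jump makes this observable continuous everywhere, so Proposition~\ref{Prop: plus measure spin model}(ii) transfers directly. This avoids random-cluster monotonicity in boundary conditions and the tail estimate. It also identifies $\Psi^{1}_{\beta}$ explicitly as the Edwards--Sokal image of $\nu^{+}_{\beta}$, which is the content of Remark~\ref{remark: infinite coupling}. You could even define $\Psi^{1}_{\beta}$ that way and dispense with Kolmogorov's theorem.

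\textbf{What the paper's route buys.} The sandwich yields convergence of $\Psi^{(w,\sfb)}_{\Lambda,\beta}[H]$ for every increasing local event $H$, not only for continuity sets. This is a genuinely stronger statement when $|\varphi_x|$ has atoms.

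\textbf{Two small points.} To pass from the functions $\mathbbm{1}_{\omega|_F\equiv1}\,g(\sfa|_K)$ to all bounded continuous local functions, you should mention the inclusion--exclusion expansion of $\mathbbm{1}_{\omega|_F=\zeta}$. The uniform integrability remark is not needed.
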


\begin{proof}
    Let $\Lambda' \subset \Lambda$ be finite subsets of $V$ and let $H$ be an increasing event depending only on $(\sfa|_{\Lambda'}, \omega|_{\overline{E}(\Lambda')})$. Let $F_{\Lambda'}$ be the event that $\sfa_x \leq \eta^{+}_{x}$ for all $x$ in $\partial^{\mathrm{ext}}\Lambda'$.
    Applying the domain Markov property and Proposition \ref{prop: bc monotonicity for RC}, we get
    \begin{equation}
        \label{eq: plus convergence 0}
        \Psi_{\Lambda, \beta}^{(w, \sfb)}[H] = \int_{(\mathbb{R}^+)^{\overline{\Lambda}} \times \{0,1\}^{\overline{E}(\Lambda)}} \Psi_{\Lambda', \beta}^{(w(\Lambda)^{\tilde{\omega}}, \tilde{\sfa})}[H] \mathrm{d} \Psi_{\Lambda, \beta}^{(w, \sfb)}[(\tilde{\sfa}, \tilde{\omega})]
        \leq \Psi_{\Lambda', \beta}^{(w, \eta^{+})}[H] + \chi_{\Lambda, \beta}^{(w, \sfb)}[F_{\Lambda'}^{\mathsf{c}}],
    \end{equation}
    where $w(\Lambda)^{\tilde{\omega}}$ is the partition of $\partial^{\mathrm{ext}} \Lambda'$ where two vertices are in the same element if and only if they are in the same cluster of $\tilde{\omega}$ in the graph $(\overline{\Lambda} \setminus \Lambda', \overline{E}(\Lambda) \setminus \overline{E}(\Lambda'))$ after identifying all clusters that intersect $\partial \Lambda$.
    If we can show that 
    \begin{equation}
        \label{eq: plus convergence 1}
        \lim_{\Lambda' \nearrow V} \lim_{\Lambda \nearrow V} \chi_{\Lambda, \beta}^{(w, \sfb)}[F_{\Lambda'}^{\mathsf{c}}] = 0,
    \end{equation}
    then taking first $\limsup_{\Lambda \nearrow V}$ and then $\liminf_{\Lambda' \nearrow V}$ in \eqref{eq: plus convergence 0} and using the monotonicity in boundary conditions of Proposition \ref{prop: bc monotonicity for RC}, we have that $\Psi_{\Lambda, \beta}^{(w, \sfb)}[H]$ and $\Psi_{\Lambda, \beta}^{(w, \eta^{+})}[H]$ converge to the same limit, which we define to be $\Psi_{\beta}^{1}[H]$. We can then extend $\Psi_{\beta}^{1}$ to a measure on the $\sigma-$algebra generated by increasing events depending on finitely many vertices and edges. 
    
    To prove \eqref{eq: plus convergence 1}, note that by the Edwards--Sokal coupling, $\chi_{\Lambda, \beta}^{(w, \sfb)}$ is the law of the absolute value field in the spin model $\nu_{\Lambda, \beta}^{\sfb}$, so by Proposition \ref{Prop: plus measure spin model},
    \begin{equation*}
        \lim_{\Lambda \nearrow V} \chi_{\Lambda, \beta}^{(w, \sfb)} [F_{\Lambda'}^{\mathsf{c}}] 
        = \nu^{+}_{\beta}[|\varphi_x| \geq \eta^{+}_x \text{ for some } x \in \partial^{\mathrm{ext}} \Lambda'].
    \end{equation*}
    The regularity property \eqref{eq: infinite-vol regularity} of $\nu_{\beta}^{+}$ together with a union bound yields for some constants $a, B >0$,
    \begin{align*}
        \lim_{\Lambda \nearrow V} \chi_{\Lambda, \beta}^{(w, \sfb)} [F_{\Lambda'}^{\mathsf{c}}]
         &\leq e^{B} \sum_{x \in \partial^{\mathrm{ext}} \Lambda'} \nu_{\{x\}, 0, \rho_a}^{0} [|\varphi_x| \geq \eta_{x}^+].
    \end{align*}
    Now let $g(\Lambda') = \min_{x \in \partial^{\mathrm{ext}} \Lambda'} d_G(o, x)$. Applying Markov's inequality to the random variable $e^{3 |\varphi_x|^2}$, we obtain
    \begin{align*}
        \lim_{\Lambda \nearrow V} \chi_{\Lambda, \beta}^{(w, \sfb)} [F_{\Lambda'}^{\mathsf{c}}]
        &\leq e^{B} \nu_{\{x\}, 0, \rho_a}^{0}[e^{3|\varphi_x|^2}] \sum_{x \in \partial^{\mathrm{ext}} \Lambda'} e^{-3 |\eta_{x}^+|^2}\\
        &\leq e^{B} \nu_{\{x\}, 0, \rho_a}^{0}[e^{3|\varphi_x|^2}] \sum_{k = g(\Lambda')}^{\infty} |\Lambda_k|^{-2} \longrightarrow 0 \text{ as } \Lambda' \nearrow V.
    \end{align*}
\end{proof}

\begin{remark}
    \label{remark: infinite coupling}
    The coupling between the spin model and its random cluster representation extends to the infinite-volume measures $\nu_{\beta}^{+}$ and $\Psi_{\beta}^{1}$. In particular, from Corollary \ref{Cor: coupling} we get $ \nu_{\beta}^{+} [\varphi_x] = \Psi_{\beta}^{1}[\sfa_x \mathbbm{1}_{x \longleftrightarrow \infty}]$, where $x \longleftrightarrow \infty$ is the event that the cluster containing $x$ is infinite, so the critical point $\beta_c =\inf\{\beta\geq 0: \, \nu_{\beta}^{+} [\varphi_o] > 0\}$ for the spin model coincides with the critical point $\inf\{\beta \geq 0 : \, \Psi_{\beta}^{1}[o \longleftrightarrow \infty] > 0\}$ for the existence of an infinite cluster in the random cluster representation.
\end{remark}

\subsection{Correlation inequalities}
\label{Section: monotonicity}
Unlike the classical random cluster model, the edge marginal $\Phi_{\Lambda, \beta}^{(\xi, \sfb)}$ of our model does not in general satisfy monotonicity when conditioning on the state of edges. The main goal of this subsection is to show that $\Psi_{\Lambda, \beta}^{(\xi, \sfb)}$ does satisfy a weaker monotonicity property when conditioning on the states of edges together with their endpoints, encapsulated in Definition \ref{Def: weak monotonicity} below. Along the way we will prove some useful correlation inequalities of the random cluster representation.
\begin{definition}
    \label{Def: weak monotonicity}
    Let $\mu$ be a measure on pairs $(\sfa, \omega) \in (\mathbb{R}^{+})^{\overline{\Lambda}} \times \{0,1\}^{\overline{E}(\Lambda)}$ and recall Definition \ref{Def: increasing}.
    \begin{itemize}
    \item We say that $\mu$ is \textit{weakly monotonic} if $\sfa|_{\partial^{\mathrm{ext}}\Lambda}$ is deterministic and for any $\Lambda' \subset \Lambda$, $E' \subset \{xy \in \overline{E}(\Lambda') : x,y \in \Lambda' \cup \partial^{\mathrm{ext}} \Lambda\}$, and any $\sfa_1, \sfa_2 \in (\mathbb{R}^{+})^{\Lambda'}, \omega_1, \omega_2 \in \{0, 1\}^{E'}$ with $\sfa_1 \leq \sfa_2$ and $\omega_1 \leq \omega_2$ pointwise,
    \begin{equation*}
        \mu[\ \cdot \mid \sfa|_{\Lambda'} = \sfa_1, \omega|_{E'} = \omega_1] \preceq \mu[\ \cdot \mid \sfa|_{\Lambda'} = \sfa_2, \omega|_{E'} = \omega_2].
    \end{equation*}
    \item
    We say that $\mu$ satisfies the FKG inequality if for any increasing square-integrable functions $f, g: (\mathbb{R}^{+})^{\overline{\Lambda}} \times \{0, 1\}^{\overline{E}(\Lambda)} \rightarrow \mathbb{R}$, we have
    $\mu[f \cdot g] \geq \mu[f] \mu[g].$
    \end{itemize}
\end{definition}

In the next proposition, we show that weak monotonicity implies a stronger form of the FKG inequality that allows for conditioning on the states of vertices and on edges whose endpoints are both conditioned on.

\begin{proposition}
    \label{prop: monotonicity implies FKG}
    Let $\mu$ be a weakly monotonic measure on $(\mathbb{R}^{+})^{\overline{\Lambda}} \times \{0,1\}^{\overline{E}(\Lambda)}$. For any $\Lambda' \subset \Lambda$, $E' \subset \{xy \in \overline{E}(\Lambda') : x,y \in \Lambda' \cup \partial^{\mathrm{ext}} \Lambda\}$, and any configurations $\eta \in (\mathbb{R}^{+})^{\Lambda'}$, $\theta \in \{0,1\}^{E'}$, the conditional measure $\mu[\ \cdot \mid \sfa|_{\Lambda'} = \eta, \omega|_{E'} = \theta]$ satisfies the FKG inequality.
\end{proposition}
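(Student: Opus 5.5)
We prove the FKG inequality for the conditional measure $\mu' := \mu[\ \cdot \mid \sfa|_{\Lambda'} = \eta, \omega|_{E'} = \theta]$ by the standard martingale/induction argument used to derive FKG from monotonicity of conditional laws (as for Harris' inequality). The key observation is that conditioning $\mu'$ further on the values of additional vertices and on admissible edges is again a conditioning of $\mu$ of the type allowed in Definition~\ref{Def: weak monotonicity}. So $\mu'$ inherits weak monotonicity on the remaining variables. We then reveal the remaining variables one at a time, in an \emph{admissible} order: the absolute value $\sfa_z$ of every vertex $z \in \Lambda\setminus\Lambda'$ first, and afterwards the edges of $\overline{E}(\Lambda)\setminus E'$. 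Once all of $\Lambda$ is revealed, every edge of $\overline{E}(\Lambda)$ has both endpoints in $\Lambda \cup \partial^{\mathrm{ext}}\Lambda$. Since $\sfa|_{\partial^{\mathrm{ext}}\Lambda}$ is deterministic, each intermediate conditioning is of the form $\sfa|_{\Lambda''} = \cdot$, $\omega|_{E''} = \cdot$ with $\Lambda' \subset \Lambda''$, $E' \subset E''$ and $E''$ consisting of edges with both endpoints in $\Lambda''\cup\partial^{\mathrm{ext}}\Lambda$.

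We proceed by induction on the number of unrevealed variables, and it suffices to show the following. Let $X$ be the next variable revealed, and let $f,g$ be increasing. Then $F(x) := \mu'[f \mid X = x]$ and $G(x):=\mu'[g\mid X=x]$ are increasing in $x$, by weak monotonicity applied to the two conditionings that differ only in the value of $X$. The conditional measures $\mu'[\cdot\mid X=x]$ satisfy FKG by the induction hypothesis, since they are again conditionings of $\mu$ of the admissible type with one fewer free variable. The base case is trivial, because with everything revealed the measure is a point mass. We then write
\[
\mu'[fg] = \mu'\big[\mu'[fg\mid X]\big] \geq \mu'[F(X)G(X)] \geq \mu'[F(X)]\,\mu'[G(X)] = \mu'[f]\mu'[g].
\]
The last inequality is the one-dimensional Chebyshev (Harris) inequality for two increasing functions of a single real or $\{0,1\}$-valued variable. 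That inequality holds for any law, by the symmetrisation $\iint (F(x)-F(y))(G(x)-G(y))\,\mathrm{d}\mu(x)\,\mathrm{d}\mu(y)\geq 0$.

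The main obstacle is measure-theoretic, since the $\sfa$ variables are continuous and conditioning on $\sfa|_{\Lambda'}=\eta$ is a conditioning on a null event. We interpret the conditional laws via the explicit densities in Definition~\ref{Def: random cluster}, or via a regular version of the conditional distribution, as the definition of weak monotonicity implicitly does. We then check that the tower property and the inductive conditional laws are consistent with these versions for almost every value. To extend from bounded to square-integrable $f,g$, we truncate to $f\wedge M \vee -M$, which remains increasing, and pass to the limit by dominated convergence. A finer point is that the induction hypothesis is applied to the measures $\mu'[\cdot\mid X = x]$, which must be expressible as conditionings of $\mu$ itself. This is why we state and prove the claim directly for the whole family of admissible conditionings of $\mu$, and induct on $|\Lambda\setminus\Lambda''| + |\overline{E}(\Lambda)\setminus E''|$.
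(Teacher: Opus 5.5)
Your proposal is correct and is essentially the paper's proof. The paper also inducts over all admissible conditionings $(\Lambda',E')$, starting from the trivial full conditioning and removing one edge, or one vertex not incident to $E'$, at a time (the reverse of your revealing order), and it combines the inductive FKG for the finer conditioning with weak monotonicity in the removed variable; its two-copy symmetrisation computation is exactly your tower-property-plus-one-variable-Chebyshev step written out.
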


\begin{proof}
    We proceed by induction on $\Lambda'$ and $E'$, following a similar approach to the proof of \cite[Proposition 4.8]{well_behaved}. First observe that the statement is trivially true when $\Lambda' = \Lambda$ and $E' = \overline{E}(\Lambda)$.
    Now assume that for some $\Lambda' \subset \Lambda$ and $E' \subset \{xy \in \overline{E}(\Lambda') : x,y \in \Lambda' \cup \partial^{\mathrm{ext}} \Lambda\}$, we have for any configurations $\eta \in (\mathbb{R}^+)^{\Lambda'}$, $\theta \in \{0, 1\}^{E'}$ and any increasing square-integrable functions $f, g,$
    \begin{equation*}
        \mu[fg \mid \sfa|_{\Lambda'} = \eta, \omega|_{E'} = \theta] \geq 
        \mu[f \mid \sfa|_{\Lambda'} = \eta, \omega|_{E'} = \theta]
        \mu[g \mid \sfa|_{\Lambda'} = \eta, \omega|_{E'} = \theta].
    \end{equation*}
    
    We aim to show that the FKG inequality is satisfied by the measure conditioned on  the configuration in $(\Lambda', E'\setminus \{xy\})$ for any edge $xy \in E'$, and also by the measure conditioned on the configuration in $(\Lambda' \setminus \{x\}, E')$ for any vertex $x \in \Lambda'$ that is not incident to any edge in $E'$. 
    Since any $(\Lambda', E')$ with $\Lambda' \subset \Lambda$ and $E' \subset \{xy \in \overline{E}(\Lambda') : x,y \in \Lambda' \cup \partial^{\mathrm{ext}} \Lambda\}$ can be reached from $(\Lambda, \overline{E}(\Lambda))$ by repeatedly removing either an edge or a vertex that is not incident to any edge, the desired result follows.
    
    Let us first consider the case when we remove an edge $xy$ from the conditioning. Write $\mu_{\Lambda', E'}^{\eta, \theta} = \mu[\ \cdot \mid \sfa|_{\Lambda'} = \eta, \omega|_{E'} = \theta]$.
    For any $\eta \in (\mathbb{R}^+)^{\Lambda'}$, $\theta \in \{0, 1\}^{E' \setminus \{xy\}}$, sampling two independent configurations $(\sfa, \omega), (\sfa', \omega') \sim \mu_{\Lambda', E' \setminus \{xy\}}^{\eta, \theta}$, we have 
    \begin{equation}
         \label{eq: FKG induction 0}
         \begin{split}
         \mu_{\Lambda', E' \setminus \{xy\}}^{\eta, \theta} \otimes \mu_{\Lambda', E' \setminus \{xy\}}^{\eta, \theta} [(f(\sfa, \omega) - f(\sfa', \omega'))(g(\sfa, \omega) - g(\sfa', \omega'))] = \\
         2 (\mu_{\Lambda', E' \setminus \{xy\}}^{\eta, \theta}[f\cdot g] - \mu_{\Lambda', E' \setminus \{xy\}}^{\eta, \theta}[f] \mu_{\eta, \theta}[g]).
         \end{split}
    \end{equation}
    We claim that for any $s, s' \in \{0,1\}$ and any increasing functions $f, g$,
    \begin{equation}
        \label{eq: FKG induction 1}
        \mu_{\Lambda', E'}^{\eta, \theta \Delta s} \otimes \mu_{\Lambda', E'}^{\eta, \theta \Delta s'} [(f(\sfa, \omega) - f(\sfa', \omega'))(g(\sfa, \omega) - g(\sfa', \omega'))] \geq 0,
    \end{equation}
    where $\theta \Delta t \in \{0, 1\}^{E'}$ is given by $\theta \Delta t|_{E' \setminus \{xy\}} = \theta|_{E' \setminus \{xy\}}$ and $(\theta \Delta t)_{xy} = t$. 
    After averaging \eqref{eq: FKG induction 1} over $s$ and $s'$, we obtain that the left hand side of \eqref{eq: FKG induction 0} is positive, which implies that $\mu_{\Lambda', E' \setminus \{xy\}}^{\eta, \theta}$ satisfies the FKG inequality. Since $\eta$ and $\theta$ were arbitrary, this gives the desired result for $(\Lambda', E' \setminus \{xy\}$). To prove \eqref{eq: FKG induction 1}, let $\tilde{\mu}^{t} = \mu_{\Lambda', E'}^{\eta, \theta \Delta t}$ and observe that
    \begin{align}
    \nonumber
    \text{LHS of }\eqref{eq: FKG induction 1}
    &=	
    \tilde{\mu}^{s}[fg] + \tilde{\mu}^{s'}[fg]- \tilde{\mu}^{s}[f]\tilde{\mu}^{s'}[g]
    - \tilde{\mu}^{s'}[f]\tilde{\mu}^{s}[g]
    \\
    \nonumber
    &=
    \tilde{\mu}^{s}[fg] - \tilde{\mu}^{s}[f]\tilde{\mu}^{s}[g]+\tilde{\mu}^{s'}[fg] - \tilde{\mu}^{s'}[f]\tilde{\mu}^{s'}[g]
    + \Big(\tilde{\mu}^{s}[f]-\tilde{\mu}^{s'}[f]\Big)\Big(\tilde{\mu}^{s}[g]-\tilde{\mu}^{s'}[g]\Big)
    \\
    \label{eq: FKG induction 2}
    &\geq
    \Big(\tilde{\mu}^{s}[f]-\tilde{\mu}^{s'}[f]\Big)\Big(\tilde{\mu}^{s}[g]-\tilde{\mu}^{s'}[g]\Big),
    \end{align}
    where we used the inductive hypothesis in the last inequality. By weak monotonicity, $\tilde{\mu}^{t}[F]$ is an increasing function of $t$ for any increasing function $F$, so $\tilde{\mu}^{s}[f]-\tilde{\mu}^{s'}[f]$ and $\tilde{\mu}^{s}[g]-\tilde{\mu}^{s'}[g]$ have the same sign and the last line of \eqref{eq: FKG induction 2} is positive. 

    Now consider the case when we remove a vertex $x$ from the conditioning.
    For $t \geq 0$, let $\eta \Delta t \in (\mathbb{R}^+)^{\Lambda'}$ be given by $\eta \Delta t|_{\Lambda' \setminus \{x\}} = \eta|_{\Lambda' \setminus \{x\}}$ and $(\eta \Delta t)_{x} = t$.
    By the same calculations as above, to show the desired statement for $(\Lambda' \setminus\{x\}, E')$ assuming the case $(\Lambda', E')$ is true, it suffices to check that $\mu_{\Lambda', E'}^{\eta \Delta t, \theta}[F]$ is an increasing function of $t$ for any $\eta \in (\mathbb{R}^{+})^{\Lambda' \setminus \{x\}}$, $\theta \in \{0, 1\}^{E'}$, and any increasing $F$, which is true by weak monotonicity.
\end{proof}

\begin{remark}
    \label{remark: extra vertex removal}
    In the proof of Proposition \ref{prop: monotonicity implies FKG}, the reason we restrict to removing vertices $z$ from $\Lambda'$ that are not incident to any edges in $E'$ is so that we can apply weak monotonicity again to the configuration in $(\Lambda' \setminus \{z\}, E')$. There is no need to make this restriction if we are not going to remove any further vertices or edges from the conditioning, so we can remove one extra vertex from the final conditioning and get that the FKG inequality holds for $\mu[\ \cdot \mid \sfa|_{\Lambda' \setminus \{z\}} = \eta, \omega|_{E'} = \theta]$ for any $\Lambda' \subset \Lambda$, $z\in \Lambda'$, $E' \subset \{xy \in \overline{E}(\Lambda') : x,y \in \Lambda' \cup \partial^{\mathrm{ext}} \Lambda\}$, and any configurations $\eta, \theta$.
\end{remark}

In order to show that $\Psi_{\Lambda, \beta}^{(\xi, \sfb)}$ is weakly monotonic, we first prove that the necessary FKG inequality is satisfied.

\begin{proposition}
    \label{prop: FKG}
    Let $\Lambda \subset V$ be finite with boundary conditions $(\xi, \sfb)$ on $\Lambda$. For any $\Lambda' \subset \Lambda$, $E' \subset \{xy \in \overline{E}(\Lambda') : x,y \in \Lambda' \cup \partial^{\mathrm{ext}} \Lambda\}$, and any configurations $\eta \in (\mathbb{R}^{+})^{\Lambda'}$, $\theta \in \{0,1\}^{E'}$, the conditional measure $\Psi_{\Lambda, \beta}^{(\xi, \sfb)}[\ \cdot \mid \sfa|_{\Lambda'} = \eta, \omega|_{E'} = \theta]$ satisfies the FKG inequality.
\end{proposition}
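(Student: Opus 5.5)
The plan is to prove the FKG inequality for the conditional measure by verifying the FKG lattice condition (Holley's criterion) for its density on the mixed space $(\mathbb{R}^+)^{\overline{\Lambda}}\times\{0,1\}^{\overline{E}(\Lambda)}$. By the domain Markov property \eqref{eq: DMP}, together with conditioning on edges whose endpoints are fixed, the conditional measure $\Psi_{\Lambda,\beta}^{(\xi,\sfb)}[\,\cdot \mid \sfa|_{\Lambda'}=\eta,\omega|_{E'}=\theta]$ again has a density of the same form, with respect to $\prod_{x\in\Lambda\setminus\Lambda'}\mathrm{d}\rho(\sfa_x)$ times counting measure. Up to constants, this density is
\[
W(\sfa,\omega)=2^{k(\omega)}\prod_{xy}\big(e^{2\beta \sfa_x\sfa_y}-1\big)^{\omega_{xy}} e^{-\beta \sfa_x\sfa_y},
\]
where the product is over the free edges, the edges in $E'$ are fixed, and $k$ counts clusters with $\theta$ and $\xi$ incorporated. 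One first discretises $\rho$, or works directly on the product of totally ordered spaces, and reduces to the lattice condition
\[
W(\sfa\vee\sfa',\omega\vee\omega')\,W(\sfa\wedge\sfa',\omega\wedge\omega')\ \geq\ W(\sfa,\omega)\,W(\sfa',\omega').
\]
Since the reference measure is a product measure, the standard multivariate FKG theorem (Preston / Holley, extended to products of totally ordered spaces, e.g.\ as in \cite{LammersOtt2021}) then gives the inequality. It suffices to check the condition for pairs differing in two coordinates, provided $W$ is positive on a sublattice; zeros of $W$ occur only when $\omega_{xy}=1$ and $\sfa_x\sfa_y=0$, and that set is handled by a short argument.

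The factor $2^{k(\omega)}$ is log-supermodular in $\omega$, exactly as for the classical random cluster model with $q=2\geq1$, by the standard fact that $k(\omega\vee\omega')+k(\omega\wedge\omega')\geq k(\omega)+k(\omega')$. The remaining factor is a product over edges of
\[
g(\sfa_x,\sfa_y,\omega_{xy})=e^{-\beta \sfa_x\sfa_y}\big(e^{2\beta \sfa_x\sfa_y}-1\big)^{\omega_{xy}}.
\]
This function is log-supermodular in the three variables. For $\omega=0$, $-\beta uv$ is submodular in $(u,v)$, which is the wrong sign. This is the main obstacle, and it is why the naive argument fails and why marginal monotonicity fails.

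I would resolve this by \emph{not} imposing the lattice condition on the full $(\sfa,\omega)$ density. Instead I would sum out $\omega$ given $\sfa$ via \eqref{eq: partition functions}, which gives $W$ summed over $\omega$ equal to $Z^{\mathrm{Ising}}_{\beta,\sfa}$, a ferromagnetic Ising partition function. By Proposition~\ref{prop: coupling}, the measure then splits as $\phi^{\xi}_{\beta,\sfa}[\omega]$ times $\chi[\mathrm{d}\sfa]$. The proof then has three steps.
\begin{enumerate}
\item The $\sfa$-marginal satisfies FKG. This is the second part of Proposition~\ref{prop: spin model FKG}, applied to the spin model conditioned on the relevant fixed spins and edges; this conditioning is again a spin model, with boundary fields and extra ferromagnetic couplings coming from the wired clusters of $\theta$.
\item For fixed $\sfa$, $\phi_{\beta,\sfa}$ is a classical random cluster measure with $q=2$, so it satisfies FKG, and it is stochastically increasing in $\sfa$ because $p_{xy}(\sfa)$ is increasing in $\sfa$.
\item Combine the two. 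For increasing $f,g$, the conditional expectations $F(\sfa)=\phi_{\sfa}[f(\sfa,\cdot)]$ and $G(\sfa)=\phi_{\sfa}[g(\sfa,\cdot)]$ are increasing in $\sfa$. Then
\[
\mu[fg]=\chi\big[\phi_{\sfa}[fg]\big]\ \geq\ \chi[FG]\ \geq\ \chi[F]\,\chi[G].
\]
\end{enumerate}
The delicate points are that the conditioning on $\omega|_{E'}=\theta$ is compatible with this decomposition, and that the $\sfa$-marginal under the conditioning remains an absolute-value field of a ferromagnetic spin model with nonnegative fields. Both hold because every conditioned edge has both endpoints' $\sfa$ values fixed, so the factors from $E'$ are constants or merge vertices into wired boundary clusters. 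Verifying this compatibility is the step I expect to require the most care.
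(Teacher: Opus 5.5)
Your Steps 2 and 3 are the final part of the paper's proof: condition on $\sfa$, use FKG and $p$-monotonicity of $\phi^{\xi^{\theta}}_{G',\beta,\sfa}$, then use FKG for the $\sfa$-marginal. So everything rests on Step 1, which is where the paper does essentially all of its work, and there the justification you give does not hold.

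Proposition~\ref{prop: spin model FKG} concerns the homogeneous model $\langle\cdot\rangle^{\eta}_{\Lambda,\beta}$ with \emph{pinned} boundary spins $\eta\geq 0$. The $\sfa$-marginal of the conditioned measure has density proportional to $Z^{\mathrm{Ising},\xi^{\theta}}_{G',\beta,\sfa}$ with respect to $\prod_{x\in\Lambda\setminus\Lambda'}\mathrm{d}\rho(\sfa_x)$, where $\sfa=\eta$ on $\Lambda'$ and $\sfa=\sfb$ on $\partial^{\mathrm{ext}}\Lambda$. The conditioning fixes only absolute values. Each $\theta$-cluster of fixed vertices, and each element of $\xi$, carries its own sign, which is summed over. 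The global spin flip lets you pin one class to $+$, not all of them.

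So the conditioned object is not a spin model with nonnegative boundary fields. It is a ferromagnetic model on an auxiliary graph in which every class is an unpinned $\pm1$ spin. Each class is coupled to a free neighbour $x$ with strength $\beta\sum_{z}\sfa_z$ (sum over class members adjacent to $x$), and to other classes through unconditioned edges between fixed vertices. This lies outside the cited proposition already for $\Lambda'=E'=\emptyset$ when $\xi$ has two classes carrying positive $\sfb$. Decomposing over the class signs does not rescue it: for mixed-sign boundary data the absolute value field need not be FKG, and mixtures of FKG measures need not be FKG.

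What is missing is an absolute-value FKG in this generality, and you can prove it directly from the partition function. For free $u\neq v$,
\[
\partial_{\sfa_u}\partial_{\sfa_v}\log Z^{\mathrm{Ising},\xi^{\theta}}_{G',\beta,\sfa}=\beta\,\mathbbm{1}_{uv\in E(G')}\langle\sigma_u\sigma_v\rangle+\beta^{2}\sum_{w\sim u}\sum_{r\sim v}\sfa_w\sfa_r\big(\langle\sigma_u\sigma_w\sigma_v\sigma_r\rangle-\langle\sigma_u\sigma_w\rangle\langle\sigma_v\sigma_r\rangle\big)\geq 0,
\]
where neighbours are taken in $G'$ and $\langle\cdot\rangle=\langle\cdot\rangle^{\mathrm{Ising},\xi^{\theta}}_{G',\beta,\sfa}$. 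Nonnegativity follows from GKS I and II, which apply because the $\xi^{\theta}$-constraints are limits of infinite ferromagnetic couplings. Hence the density is log-supermodular in the free coordinates, and the lattice criterion for densities with respect to a product measure on $(\mathbb{R}^{+})^{\Lambda\setminus\Lambda'}$ gives Step 1.

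With this supplied, your argument is complete and takes a different, more direct route than the paper's. The paper inducts on $(\Lambda',E')$, removing one edge or one vertex from the conditioning at a time. At each step it shows that the tilt relating the two conditional $\sfa$-marginals is increasing. For an edge this tilt is a ratio of Ising partition functions with two boundary partitions, handled by boundary-condition comparison for the random cluster model; for a vertex it is a ratio at two values of one coordinate, handled by GKS II. Combined with the inductive FKG, this gives the needed monotonicity and hence FKG at the next level.
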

\begin{proof}
    Write $\mu_{\Lambda', E'}^{\eta, \theta} = \Psi_{\Lambda, \beta}^{(\xi, \sfb)}[\ \cdot \mid \sfa|_{\Lambda'} = \eta, \omega|_{E'} = \theta]$.
    We first consider the case when $f,g$ are increasing functions that depend on $\sfa$ only, and aim to prove that for all $\Lambda' \subset \Lambda$ and $E' \subset \{xy \in \overline{E}(\Lambda') : x,y \in \Lambda' \cup \partial^{\mathrm{ext}} \Lambda\}$,
    \begin{equation}
        \label{eq: FKG induction 3}
        \forall \eta \in (\mathbb{R}^+)^{\Lambda'}, \theta \in \{0,1\}^{E'}, \qquad
        \mu_{\Lambda', E'}^{\eta, \theta}[f(\sfa) g(\sfa)] \geq \mu_{\Lambda', E'}^{\eta, \theta}[f(\sfa)] \mu_{\Lambda', E'}^{\eta, \theta}[g(\sfa)].
    \end{equation}
     
    We proceed by induction on $(\Lambda', E')$, as in the proof of Proposition \ref{prop: monotonicity implies FKG}. By \eqref{eq: FKG induction 0}, \eqref{eq: FKG induction 1}, \eqref{eq: FKG induction 2} (and identical calculations for the measure with a vertex removed from the conditioning) it suffices to show that for any $(\Lambda', E')$ such that \eqref{eq: FKG induction 3} holds, the following two properties are satisfied for any configurations $\eta \in (\mathbb{R}^+)^{\Lambda'}, \theta \in \{0,1\}^{E'}$:
    \begin{enumerate}[(i)]
        \item For any increasing $f: (\mathbb{R}^{+})^{\overline{\Lambda}} \rightarrow \mathbb{R}$, any edge $xy \in E'$, and any $s, s' \in \{0,1\}$ with $s \geq s',$
        \begin{equation*}
            \mu_{\Lambda', E'}^{\eta, \theta \Delta s}[f(\sfa)] \geq \mu_{\Lambda', E'}^{\eta, \theta \Delta s'}[f(\sfa)],
        \end{equation*}
        where $\theta \Delta s$ and $\theta \Delta s'$ are the configurations obtained from $\theta$ by setting $\theta_{xy} = s$ and $\theta_{xy} = s'$ respectively.
        \item
        For any increasing $f: (\mathbb{R}^{+})^{\overline{\Lambda}} \rightarrow \mathbb{R}$, any vertex $x \in \Lambda'$ that is not incident to any edge in $E'$, and any $s, s' \in \mathbb{R}^+$ with $s \geq s',$
        \begin{equation*}
            \mu_{\Lambda', E'}^{\eta \Delta s, \theta}[f(\sfa)] \geq \mu_{\Lambda', E'}^{\eta \Delta s', \theta}[f(\sfa)],
        \end{equation*}
        where $\eta \Delta s$ and $\eta \Delta s'$ are the configurations obtained from $\eta$ by setting $\eta_{x} = s$ and $\eta_{x} = s'$, respectively.
    \end{enumerate}
    Let us first prove (i). Fix $\Lambda', E', \eta, \theta, xy, s, s'$ as above and
    let $G' = (V(G'), E(G'))$ be the graph with edge set $\overline{E}(\Lambda) \setminus E'$ and vertex set consisting of the endpoints of these edges.
    Define the random variable
    \begin{equation*}
        Z(\sfa) = \frac{Z_{G', \beta, \sfa}^{\mathrm{Ising}, \xi^{\theta \Delta s}}}{Z_{G', \beta, \sfa}^{\mathrm{Ising}, \xi^{\theta \Delta s'}}},
    \end{equation*}
    where $\xi^{\theta'}$ is the partition of $\partial G'$ in which two vertices are in the same element if and only if they are in the same cluster of $\theta'$ after identifying clusters containing vertices in the same element of $\xi$.
    Taking the logarithmic derivative of $Z$ with respect to $\sfa_z$ for $z \in \Lambda$, we have
    \begin{equation}
        \label{eq: logarithmic derivative}
        \frac{\mathrm{d}}{\mathrm{d} \sfa_z} \log(Z(\sfa)) = \beta \sum_{\substack{w \in V(G') \\ w \sim z}} \sfa_w \left(  \langle \sigma_w \sigma_z \rangle_{G', \beta, \sfa}^{\mathrm{Ising}, \xi^{\theta \Delta s}} - \langle \sigma_w \sigma_z \rangle_{G', \beta, \sfa}^{\mathrm{Ising}, \xi^{\theta \Delta s'}} \right).
    \end{equation}
    Using the Edwards--Sokal coupling and monotonicity in boundary conditions for the standard random cluster model (\cite[Lemma 4.14]{Grimmett_RC}), \eqref{eq: logarithmic derivative} is positive and hence $Z$ is an increasing function.
    Now by a direct calculation using \eqref{eq: partition functions} we have
    \begin{equation*}
        \mu_{\Lambda', E'}^{\eta, \theta \Delta s}[f(\sfa)]
        = \frac{\mu_{\Lambda', E'}^{\eta, \theta \Delta s'}[f(\sfa) Z(\sfa)]}{\mu_{\Lambda', E'}^{\eta, \theta \Delta s'}[Z(\sfa)]},
    \end{equation*}
    so (i) follows from \eqref{eq: FKG induction 3} applied to $\mu_{\Lambda', E'}^{\eta, \theta \Delta s}$. 

    We now prove (ii). For $\sfa \in (\mathbb{R}^+)^{\overline{\Lambda}}$, let $\sfa \Delta s \in (\mathbb{R}^+)^{\overline{\Lambda}}$ be given by $(\sfa \Delta s)_{x} = s$ and $(\sfa \Delta s)|_{\overline{\Lambda} \setminus \{x\}} = \sfa|_{\overline{\Lambda} \setminus \{x\}}$. A similar calculation using \eqref{eq: partition functions} gives
    \begin{equation*}
        \mu_{\Lambda', E'}^{\eta \Delta s, \theta}[f(\sfa)]
        = \frac{\mu_{\Lambda', E'}^{\eta \Delta s', \theta}[f(\sfa) Z(\sfa)]}{\mu_{\Lambda', E'}^{\eta \Delta s', \theta}[Z(\sfa)]},
        \qquad\text{where }
        Z(\sfa) = \frac{Z_{G', \beta, \sfa \Delta s}^{\mathrm{Ising}, \xi^{\theta}}}{Z_{G', \beta, \sfa}^{\mathrm{Ising}, \xi^{\theta}}}.
    \end{equation*}
    It remains to prove that $Z$ is an increasing function, since (ii) then follows by applying \eqref{eq: FKG induction 3} to $\mu_{\Lambda', E'}^{\eta \Delta s, \theta}$.
    For $z \in \Lambda,$ we have
    \begin{equation}
        \label{eq: logarithmic derivative 2}
        \frac{\mathrm{d}}{\mathrm{d} \sfa_z} \log(Z(\sfa)) = \beta \sum_{\substack{w \in V(G') \\ w \sim z}} (\sfa \Delta s)_w  \langle \sigma_w \sigma_z \rangle_{G', \beta, \sfa \Delta s}^{\mathrm{Ising}, \xi^{\theta}} - \sfa_w\langle \sigma_w \sigma_z \rangle_{G', \beta, \sfa}^{\mathrm{Ising}, \xi^{\theta}}.
    \end{equation}
    By monotonicity of $\langle \sigma_w \sigma_z \rangle_{G', \beta, \sfa}^{\mathrm{Ising}, \xi^{\theta}}$ in the coupling constants $\sfa_x \sfa_y$, we conclude that the above is positive when $\sfa_x = s'$. Hence $Z$ is an increasing function, completing the proof of (ii) and of the statement for the case when $f, g$ depend only on $\sfa$.

    We now consider the case when $f, g : (\mathbb{R}^{+})^{\overline{\Lambda}} \times \{0,1\}^{\overline{E}(\Lambda)} \rightarrow \mathbb{R}$ are increasing functions depending on $\sfa$ and $\omega$. Let $f_{\sfa}(\omega) = f(\sfa, \omega)$ and $g_{\sfa}(\omega) = g(\sfa, \omega)$. Using the FKG inequality for the standard random cluster model (\cite[Theorem 3.8]{Grimmett_RC}),
    \begin{align*}
        \mu_{\Lambda', E'}^{\eta, \theta}[f \cdot g] = \int \phi_{G', \beta, \sfa}^{\xi}[f_{\sfa}(\omega) g_{\sfa}(\omega)] \mathrm{d} \mu_{\Lambda', E'}^{\eta, \theta}[\sfa]
        &\geq \int \phi_{G', \beta, \sfa}^{\xi}[f_{\sfa}(\omega)] \phi_{G', \beta, \sfa}^{\xi}[g_{\sfa}(\omega)]\mathrm{d} \mu_{\Lambda', E'}^{\eta, \theta}[\sfa].
    \end{align*}
    Since $\phi_{G', \beta, \sfa}^{\xi}[f_{\sfa}(\omega)]$ and $\phi_{G', \beta, \sfa}^{\xi}[g_{\sfa}(\omega)]$ are increasing functions of $\sfa$, we can apply \eqref{eq: FKG induction 3} to deduce that the right hand side above is at least $\mu_{\Lambda', E'}^{\eta, \theta}[f] \mu_{\Lambda', E'}^{\eta, \theta}[g]$.
\end{proof}

We now prove that $\Psi_{\Lambda, \beta}^{(\xi, \sfb)}$ is weakly monotonic using the FKG inequality of Proposition \ref{prop: FKG} above.

\begin{proposition}
\label{prop: weak monotonicity}
Let $\Lambda \subset V$ be finite with boundary conditions $(\xi, \sfb)$ on $\Lambda$. Then $\Psi_{\Lambda, \beta}^{(\xi, \sfb)}$ is weakly monotonic.
\end{proposition}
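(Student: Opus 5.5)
We have to show that for $\Lambda' \subset \Lambda$, an admissible $E'$, and $\sfa_1 \leq \sfa_2$, $\omega_1 \leq \omega_2$,
\[
\Psi_{\Lambda,\beta}^{(\xi,\sfb)}[\,\cdot \mid \sfa|_{\Lambda'}=\sfa_1, \omega|_{E'}=\omega_1] \preceq \Psi_{\Lambda,\beta}^{(\xi,\sfb)}[\,\cdot \mid \sfa|_{\Lambda'}=\sfa_2, \omega|_{E'}=\omega_2].
\]
By transitivity of $\preceq$, it suffices to change one coordinate at a time. That is, we either raise $\sfa_x$ from $s'$ to $s$ for a single $x \in \Lambda'$ with the rest of the conditioning fixed, or flip a single edge $xy \in E'$ from $0$ to $1$. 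Since the intermediate conditionings still have the same form $(\Lambda', E')$, this reduction is legitimate. Then, for a fixed single-coordinate change, we must show $\mu^{\mathrm{low}}[F] \leq \mu^{\mathrm{high}}[F]$ for every bounded increasing $F(\sfa,\omega)$.

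\textbf{Step 1: the $\sfa$-marginal.} In both cases, the computation in the proof of Proposition~\ref{prop: FKG} gives
\[
\mu^{\mathrm{high}}[h(\sfa)] = \frac{\mu^{\mathrm{low}}[h(\sfa) Z(\sfa)]}{\mu^{\mathrm{low}}[Z(\sfa)]}
\]
for functions $h$ of the unconditioned absolute values, with $Z$ an increasing function of $\sfa$. Here $Z$ is either a ratio of Ising partition functions with boundary partitions $\xi^{\theta\Delta s}, \xi^{\theta\Delta s'}$, or a ratio with $\sfa_x$ replaced by $s$ versus $s'$. For the vertex case one should note that $x$ need not avoid $E'$ here. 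The Radon–Nikodym factor is still the same Ising partition function ratio, and its monotonicity in the other coordinates follows from the logarithmic derivative \eqref{eq: logarithmic derivative 2} and monotonicity of Ising correlations in the couplings. I expect this to be the main point to check carefully: the derivative computation must go through when $x$ is an endpoint of conditioned edges, where the conditioned edges only enter through the partition $\xi^\theta$. Proposition~\ref{prop: FKG} says $\mu^{\mathrm{low}}$ satisfies FKG for functions of $\sfa$. Hence $\mu^{\mathrm{low}}[hZ] \geq \mu^{\mathrm{low}}[h]\mu^{\mathrm{low}}[Z]$ for increasing $h$, which gives domination of the $\sfa$-marginals.

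\textbf{Step 2: the edges given $\sfa$.} By Proposition~\ref{prop: coupling} and the domain Markov property \eqref{eq: DMP}, conditionally on the full field $\sfa$ and on $\omega|_{E'}=\theta$, the remaining edges follow the standard random cluster measure $\phi^{\xi^{\theta}}_{G',\beta,\sfa}$ on $G'$. Write $\tilde F(\sfa) = \phi^{\xi^{\theta}}_{G',\beta,\sfa}[F(\sfa,\cdot)]$. For fixed $\theta$, $\tilde F$ is increasing in $\sfa$. This holds because $F$ is increasing in $\sfa$, and $\phi$ is stochastically increasing in the edge parameters $p_{xy}(\sfa)$ (standard comparison for random cluster models with $q=2\geq 1$). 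In the edge case, moreover, $\phi^{\xi^{\theta\Delta 1}} \succeq \phi^{\xi^{\theta\Delta 0}}$ by monotonicity in boundary conditions, since the partition only gets coarser and $F(\sfa,\omega)$ with $\omega_{xy}$ raised is larger. In the vertex case, the $p$'s on edges at $x$ increase. Therefore $\tilde F^{\mathrm{high}} \geq \tilde F^{\mathrm{low}}$ pointwise, and both are increasing in $\sfa$.

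\textbf{Step 3: combining.} We obtain
\[
\mu^{\mathrm{high}}[F] = \mu^{\mathrm{high}}[\tilde F^{\mathrm{high}}] \geq \mu^{\mathrm{high}}[\tilde F^{\mathrm{low}}] \geq \mu^{\mathrm{low}}[\tilde F^{\mathrm{low}}] = \mu^{\mathrm{low}}[F],
\]
using Step 2 for the first inequality and Step 1 for the second. Finally, $\sfa|_{\partial^{\mathrm{ext}}\Lambda}=\sfb$ is deterministic, which completes the verification of Definition~\ref{Def: weak monotonicity}.
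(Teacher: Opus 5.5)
Your proof is correct and is essentially the paper's argument. Tilting the lower conditional measure by an increasing ratio of Ising partition functions and applying Proposition~\ref{prop: FKG} gives domination of the $\sfa$-marginals, and monotonicity of the standard random cluster measure in the edge parameters and in boundary conditions then handles the edges given $\sfa$. The only difference is that you pass through single-coordinate changes, whereas the paper compares $(\sfa_1,\omega_1)$ and $(\sfa_2,\omega_2)$ in one step via $Z(\sfa)=Z^{\mathrm{Ising},\xi_2}_{G',\beta,\sfa\cup\sfa_2}/Z^{\mathrm{Ising},\xi_1}_{G',\beta,\sfa\cup\sfa_1}$, so the case of vertices incident to $E'$, which you rightly checked separately, is covered by the same log-derivative computation.
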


\begin{proof}
    Let $\Lambda', E', \sfa_1, \sfa_2, \omega_1, \omega_2$ be as in Definition \ref{Def: weak monotonicity} and let $G' = (V(G'), E(G'))$ be the graph with edge set $\overline{E}(\Lambda) \setminus E'$ and vertex set consisting of the endpoints of these edges. For $i \in \{1,2\}$, let $\xi_i = \xi^{\omega_i}$ be the partition of $\partial G'$ where two vertices are in the same element if and only if they are in the same cluster of $\omega_i$ after identifying clusters that contain vertices in the same element of $\xi$.
    
    We first prove stochastic domination of the absolute value field. Let $f: (\mathbb{R}^{+})^{\overline{\Lambda}} \rightarrow \mathbb{R}$ be a bounded increasing function that depends only on $\sfa$ and define 
    \[
    Z(\sfa) = \frac{Z_{G', \beta, \sfa \cup \sfa_2}^{\mathrm{Ising}, \xi_2}}{Z_{G', \beta, \sfa \cup \sfa_1}^{\mathrm{Ising}, \xi_1}},
    \]
    where $\sfa \cup \sfa_i$ is equal to $\sfa_i$ on $\Lambda'$ and equal to $\sfa$ elsewhere. 
    Using \eqref{eq: partition functions}, we have
    \begin{equation*}
        \Psi_{\Lambda, \beta}^{(\xi, \sfb)}[f(\sfa) \mid \sfa|_{\Lambda'} = \sfa_2, \omega|_{E'} = \omega_2] = \frac{\Psi_{\Lambda, \beta}^{(\xi, \sfb)}[f(\sfa) Z(\sfa) \mid \sfa|_{\Lambda'} = \sfa_1, \omega|_{E'} = \omega_1]}{\Psi_{\Lambda, \beta}^{(\xi, \sfb)}[Z(\sfa) \mid \sfa|_{\Lambda'} = \sfa_1, \omega|_{E'} = \omega_1]}.
    \end{equation*}
By taking the logarithmic derivative of $Z(\sfa)$ as in \eqref{eq: logarithmic derivative}, we deduce that $Z(\sfa)$ is an increasing function of $\sfa$, so by Proposition \ref{prop: FKG},
    \begin{equation}
        \label{eq: weak monotonicity abs}
        \Psi_{\Lambda, \beta}^{(\xi, \sfb)}[f(\sfa) \mid \sfa|_{\Lambda'} = \sfa_2, \omega|_{E'} = \omega_2] \geq \Psi_{\Lambda, \beta}^{(\xi, \sfb)}[f(\sfa) \mid \sfa|_{\Lambda'} = \sfa_1, \omega|_{E'} = \omega_1].
    \end{equation}
    To prove the full stochastic domination, let $g: (\mathbb{R}^{+})^{\overline{\Lambda}} \times \{0, 1\}^{\overline{E}(\Lambda)} \rightarrow \mathbb{R}$ be an increasing function and set $f(\sfa) = \phi_{G', \beta, \sfa}^{\xi_2}[g_{\sfa}]$, where $g_{\sfa}(\omega) = g(\sfa, \omega)$. By monotonicity properties of the standard random cluster model, $f$ is an increasing function of $\sfa$, so applying \eqref{eq: weak monotonicity abs} we obtain
    \begin{align*}
        \Psi_{\Lambda, \beta}^{(\xi, \sfb)}[g(\sfa, \omega) \mid \sfa|_{\Lambda'} = \sfa_2, \omega|_{E'} = \omega_2] &= \Psi_{\Lambda, \beta}^{(\xi, \sfb)}[\phi^{\xi_2}_{G', \beta. \sfa}[g_{\sfa}] \mid \sfa|_{\Lambda'} = \sfa_2, \omega|_{E'} = \omega_2]\\
        &\geq \Psi_{\Lambda, \beta}^{(\xi, \sfb)}[\phi^{\xi_2}_{G', \beta. \sfa}[g_{\sfa}] \mid \sfa|_{\Lambda'} = \sfa_1, \omega|_{E'} = \omega_1]\\
        &\geq \Psi_{\Lambda, \beta}^{(\xi, \sfb)}[\phi^{\xi_1}_{G', \beta. \sfa}[g_{\sfa}] \mid \sfa|_{\Lambda'} = \sfa_1, \omega|_{E'} = \omega_1]\\
        &=\Psi_{\Lambda, \beta}^{(\xi, \sfb)}[g(\sfa, \omega) \mid \sfa|_{\Lambda'} = \sfa_1, \omega|_{E'} = \omega_1],
    \end{align*}
    where in the third line we have used monotonicity in boundary conditions for the standard random cluster model.
\end{proof}

\begin{remark}
    We could allow the random cluster measure to be defined on configurations $(\sfa, \omega) \in (\mathbb{R}^{+})^{\overline{\Lambda}} \times \{0,1\}^{\overline{E}(\Lambda) \cup E'}$, where $E'$ consists of some edges $xy$ outside $\overline{E}(\Lambda)$ for which $p_{xy}$ takes the deterministic value given by the boundary conditions. Then by the domain Markov property, weak monotonicity of $\Psi_{\Lambda, \beta}^{(\xi, \sfb)}$ is equivalent to monotonicity in boundary conditions for this class of measures, which explains why the proof of Proposition \ref{prop: weak monotonicity} closely resembles that of \cite[Proposition 4.10]{well_behaved}.
\end{remark}

Having completed the proof of weak monotonicity, we now state two further correlation inequalities which will be useful in later proofs.
In the next proposition, we show that the FKG inequality holds for the measure $\Psi_{\Lambda, \beta}^{(\xi, \sfb)}$ conditional on $\omega_{xy} = 1$, a case which is not covered by Proposition \ref{prop: FKG}. 

\begin{proposition}
    \label{prop: open edge FKG}
    For any edge $xy \in \overline{E}(\Lambda)$, $\Psi_{\Lambda, \beta}^{(\xi, \sfb)}[\ \cdot \mid \omega_{xy} = 1]$ satisfies the FKG inequality.
\end{proposition}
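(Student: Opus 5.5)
We want FKG for $\mu^1 := \Psi_{\Lambda,\beta}^{(\xi,\sfb)}[\,\cdot \mid \omega_{xy}=1]$, for increasing $f,g$ of $(\sfa,\omega)$. The obstacle is that the conditioning on $\omega_{xy}$ is not covered by Proposition \ref{prop: FKG}, since $\sfa_x,\sfa_y$ are left free. The plan is to reduce to that proposition by first conditioning on $(\sfa_x,\sfa_y)$ as well, and then integrating these two variables out by hand.

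\textbf{Step 1: the conditional measures are FKG.} By Proposition \ref{prop: FKG}, applied with $\Lambda'=\{x,y\}\cap\Lambda$ and $E'=\{xy\}$, for every $(s,t)$ the measure
\[
\mu^{s,t}:=\Psi[\,\cdot\mid \sfa_x=s,\sfa_y=t,\omega_{xy}=1]
\]
satisfies FKG. By weak monotonicity (Proposition \ref{prop: weak monotonicity}), $\mu^{s,t}[F]$ is increasing in $(s,t)$ for every increasing $F$.

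\textbf{Step 2: reduce to the law of $(\sfa_x,\sfa_y)$.} Let $\pi$ be the law of $(\sfa_x,\sfa_y)$ under $\mu^1$. Then
\[
\mu^1[fg]=\int \mu^{s,t}[fg]\,d\pi \geq \int \mu^{s,t}[f]\,\mu^{s,t}[g]\,d\pi .
\]
The maps $(s,t)\mapsto\mu^{s,t}[f]$ and $(s,t)\mapsto\mu^{s,t}[g]$ are increasing, so it suffices to show that $\pi$ satisfies FKG on $(\mathbb{R}^+)^2$.

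\textbf{Step 3: $\pi$ is FKG.} This is the main step. Write $\pi_0$ for the law of $(\sfa_x,\sfa_y)$ under the unconditioned $\Psi$. By Proposition \ref{prop: FKG} with $\Lambda'=\emptyset$, $E'=\emptyset$, the measure $\pi_0$ is FKG. We have
\[
\frac{d\pi}{d\pi_0}(s,t)\propto \Psi[\omega_{xy}=1\mid \sfa_x=s,\sfa_y=t].
\]
The right-hand side is increasing in $(s,t)$ by weak monotonicity. A positive increasing tilt of an FKG measure need not be FKG in general, so this alone is not enough. Instead we use the explicit form
\[
\Psi[\omega_{xy}=1\mid \sfa_x=s,\sfa_y=t]=p_{xy}(s,t)\,\bigl(1+\langle\sigma_x\sigma_y\rangle^{s,t}\bigr)/2,
\]
where $\langle\sigma_x\sigma_y\rangle^{s,t}$ is averaged over the remaining absolute values via the Edwards--Sokal coupling.

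A cleaner route avoids computing the conditional probability. Write
\[
\Psi[\,\cdot\,;\omega_{xy}=1]=\int p_{xy}(\sfa)\,\Psi[\,\cdot\,;\sigma_x=\sigma_y\mid\sfa]\,d\chi .
\]
That is, work in the joint spin/edge coupling of $(\varphi,\omega)$. Conditioning on $\omega_{xy}=1$ amounts to conditioning on $\sigma_x=\sigma_y$ and then tilting by $p_{xy}(\sfa)=1-e^{-2\beta\sfa_x\sfa_y}$. The law of $\sfa$ given $\sigma_x=\sigma_y$ is the absolute value law of the spin model with the constraint $\varphi_x\varphi_y\geq0$. Equivalently, it is the absolute-value marginal of $\Psi$ conditioned on $x,y$ lying in the same wired class. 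That marginal is FKG by the argument of Proposition \ref{prop: FKG}: the logarithmic-derivative computation \eqref{eq: logarithmic derivative} with $x,y$ identified in the partition.

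The remaining tilt is $p_{xy}(\sfa)/\bigl(\text{normalisation}\bigr)$, which is a function of $\sfa_x\sfa_y$ only. For $\pi$ on $(\mathbb{R}^+)^2$ to stay FKG it suffices that the tilt be log-supermodular, i.e.\ that $\partial_s\partial_t\log p(st)\geq0$. We check this directly:
\[
\partial_t\log p(st) = \frac{2\beta s\,e^{-2\beta st}}{1-e^{-2\beta st}} = \frac{2\beta s}{e^{2\beta st}-1},
\]
and, with $u=2\beta st$,
\[
\partial_s\!\left(\frac{2\beta s}{e^{2\beta st}-1}\right)=\frac{2\beta\bigl(e^{u}-1-u e^{u}\bigr)}{(e^{u}-1)^2}\leq 0,
\]
since $e^{u}-1-ue^{u}\leq0$ for $u\geq0$. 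So $\log p(st)$ is submodular, and this route fails as stated.

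Therefore the fallback, and the likely intended argument, is to carry the edge $xy$ inside the Ising weight rather than tilting. Conditioning on $\omega_{xy}=1$ is equivalent to removing the edge $xy$ from $G'$, identifying $x$ and $y$ in the boundary partition, and multiplying by the factor $(1-e^{-2\beta\sfa_x\sfa_y})$. Using $Z^{\mathrm{RC}}=Z^{\mathrm{Ising}}e^{\beta\sum\sfa\sfa}$, the full $\sfa$-density of $\mu^1$ is then
\[
\bigl(e^{2\beta\sfa_x\sfa_y}-1\bigr)\, Z^{\mathrm{Ising},\,x\equiv y}_{G'\setminus xy}(\sfa)\prod d\rho .
\]
The factor $e^{2\beta\sfa_x\sfa_y}-1$ is log-supermodular: $\log(e^{v}-1)$ has $\partial_s\partial_t\geq0$ for $v=2\beta st$, by the reverse of the computation above. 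We then run the induction of Proposition \ref{prop: FKG} for this density to obtain FKG of the $\sfa$-marginal. Finally, we lift to functions of $(\sfa,\omega)$ exactly as in the last paragraph of that proof, using the FKG inequality for the standard random cluster measure with $x\equiv y$. The main obstacle is verifying the increasing property of the ratio $Z(\sfa)$ in the induction steps when the prefactor $e^{2\beta\sfa_x\sfa_y}-1$ is present. When $x$ or $y$ is conditioned, this factor only adds the monotone term $\partial\log(e^{2\beta st}-1)$ to the log-derivative, so the argument should go through.
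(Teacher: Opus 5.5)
Your final route is essentially the paper's proof: there, FKG for $\Psi_{\Lambda,\beta}^{(\xi,\sfb)}[\,\cdot\mid \sfa_x=s,\omega_{xy}=1]$ comes from Proposition~\ref{prop: FKG} and Remark~\ref{remark: extra vertex removal}, and the only new step is removing $x$. That step uses the Ising ratio of \eqref{eq: logarithmic derivative 2} on $G'$ with $x,y$ merged, together with the fact that $\sinh(\beta s\sfa_y)/\sinh(\beta s'\sfa_y)$ is increasing in $\sfa_y$ for $s\geq s'$, which is exactly the log-supermodularity you identify.

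One correction: the weight carried by the open edge $xy$ is $p_{xy}/\sqrt{1-p_{xy}}=e^{\beta\sfa_x\sfa_y}-e^{-\beta\sfa_x\sfa_y}$, not $e^{2\beta\sfa_x\sfa_y}-1$, because you dropped its factor $\sqrt{1-p_{xy}}$. Since $\log\sinh(\beta st)$ is also supermodular in $(s,t)$, the argument is unaffected.
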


\begin{proof}
    By Proposition \ref{prop: weak monotonicity} and Remark \ref{remark: extra vertex removal}, we know that $\Psi_{\Lambda, \beta}^{(\xi, \sfb)}[\ \cdot \mid \sfa_x = s, \omega_{xy} = 1]$ satisfies the FKG inequality for any $s \in \mathbb{R}^{+}$, which proves the case when $xy$ has one endpoint outside $\Lambda$. To remove the vertex $x$ from the conditioning in the case when both $x$ and $y$ are in $\Lambda$, we proceed as in the proof of Proposition \ref{prop: FKG} and first aim to show that for any $s \geq s'$ and any increasing function $f$ that depends only on $\sfa$,
    \begin{equation}
        \label{eq: open edge FKG 1}
        \Psi_{\Lambda, \beta}^{(\xi, \sfb)}[f(\sfa) \mid \sfa_x =s, \omega_{xy} = 1]
        \geq \Psi_{\Lambda, \beta}^{(\xi, \sfb)}[f(\sfa) \mid \sfa_x =s', \omega_{xy} = 1].
    \end{equation}
    Let $G' = (\overline{\Lambda}, \overline{E}(\Lambda) \setminus \{xy\})$ and let $\xi^{xy}$ be the partition obtained from $\xi$ by merging the element containing $x$ with the element containing $y$. Also let $\sfa \Delta s \in (\mathbb{R}^{+})^{\overline{\Lambda}}$ be the configuration obtained from $\sfa$ by setting $\sfa_x$ equal to $s$. We then have
    \begin{equation}
        \label{eq: open edge FKG 2}
        \Psi_{\Lambda, \beta}^{(\xi, \sfb)}[f(\sfa) \mid \sfa_x =s, \omega_{xy} = 1]
        = \frac{\Psi_{\Lambda, \beta}^{(\xi, \sfb)}[f(\sfa) Z(\sfa) \mid \sfa_x =s', \omega_{xy} = 1]}{\Psi_{\Lambda, \beta}^{(\xi, \sfb)}[Z(\sfa) \mid \sfa_x =s', \omega_{xy} = 1]},
    \end{equation}
    where
    \begin{equation*}
        Z(\sfa) = \frac{p_{xy}(\sfa \Delta s) \sqrt{1- p_{xy}(\sfa)} Z_{G', \beta, \sfa \Delta s}^{\mathrm{Ising}, \xi^{xy}}}{p_{xy}(\sfa)\sqrt{1- p_{xy}(\sfa \Delta s)} Z_{G', \beta, \sfa}^{\mathrm{Ising}, \xi^{xy}}}.
    \end{equation*}
    By taking the logarithmic derivative as in \eqref{eq: logarithmic derivative 2}, $\frac{Z_{G', \beta, \sfa \Delta s}^{\mathrm{Ising}, \xi^{xy}}}{Z_{G', \beta, \sfa}^{\mathrm{Ising}, \xi^{xy}}}$ is an increasing function of $\sfa$ when $\sfa_x = s'$. We also have that
    \begin{equation*}
        \frac{p_{xy}(\sfa \Delta s) \sqrt{1- p_{xy}(\sfa)}}{p_{xy}(\sfa)\sqrt{1- p_{xy}(\sfa \Delta s)}} = \frac{e^{\beta s \sfa_y} - e^{- \beta s \sfa_y}}{e^{\beta s' \sfa_y} - e^{-\beta s' \sfa_y}}
    \end{equation*}
    is an increasing function of $\sfa_y$. Hence $Z$ is an increasing function, and applying the FKG inequality in \eqref{eq: open edge FKG 2} yields \eqref{eq: open edge FKG 1}, which (following the same steps as in \eqref{eq: FKG induction 0}, \eqref{eq: FKG induction 1}, and \eqref{eq: FKG induction 2}) implies that for any increasing functions $f, g$ that depend only on $\sfa$,
    \begin{equation}
        \label{eq: open edge FKG 3}
        \Psi_{\Lambda, \beta}^{(\xi, \sfb)} [f(\sfa) g(\sfa) \mid \omega_{xy} =1] \geq \Psi_{\Lambda, \beta}^{(\xi, \sfb)} [f(\sfa) \mid \omega_{xy} = 1]\Psi_{\Lambda, \beta}^{(\xi, \sfb)} [g(\sfa) \mid \omega_{xy} = 1].
    \end{equation}
    To extend \eqref{eq: open edge FKG 3} to the case when $f$ and $g$ depend on both $\sfa$ and $\omega$, we write
    \begin{equation*}
        \Psi_{\Lambda, \beta}^{(\xi, \sfb)}[f(\sfa, \omega) g(\sfa, \omega) \mid \omega_{xy} = 1] = \Psi_{\Lambda, \beta}^{(\xi, \sfb)}[ \phi_{G', \beta, \sfa}^{\xi^{xy}}[f_{\sfa} g_{\sfa}] \mid \omega_{xy} = 1],
    \end{equation*}
    where $f_{\sfa}(\omega) = f(\sfa, \omega)$, $g_{\sfa}(\omega) = g(\sfa, \omega)$,
    and use the FKG inequality of the standard random cluster model. Since $\phi_{G', \beta, \sfa}^{\xi^{xy}}[f_{\sfa}]$ and $ \phi_{G', \beta, \sfa}^{\xi^{xy}}[g_{\sfa}]$ are increasing functions of $\sfa$, we can then apply \eqref{eq: open edge FKG 3} to conclude.
\end{proof}

In the following lemma, we prove that the random cluster measure satisfies assumption \eqref{eq: zero conditioning} in the statement of the OSSS inequality, which roughly speaking states that closing a vertex is less costly than closing the edges around it. 

\begin{lemma}
    \label{Lemma: zero conditioning}
    For any finite $\Lambda \subset V$, any boundary conditions $(\xi, \sfb)$ on $\Lambda$, and any $\beta \geq 0$, $\Psi_{\Lambda, \beta}^{(\xi, \sfb)}$ satisfies \eqref{eq: zero conditioning}.
\end{lemma}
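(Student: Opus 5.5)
The plan is to compare both conditional measures with a single reference measure, namely the measure on the graph with $z$ effectively deleted, and then to use the FKG inequality of Proposition~\ref{prop: FKG}. Let $E_z=\{xz\in\overline{E}(\Lambda): x\sim z\}$ and write $\Psi=\Psi_{\Lambda,\beta}^{(\xi,\sfb)}$.

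\textbf{Step 1: the measure $\mu_0=\Psi[\,\cdot\mid \sfa_z=0]$.} From Definition~\ref{Def: random cluster}, setting $\sfa_z=0$ gives $p_{xz}(\sfa)=0$ for every $x\sim z$. Hence every edge of $E_z$ is closed almost surely, and $\sqrt{1-p_{xz}}=1$ for these edges. The vertex $z$ is then an isolated cluster, which contributes only a constant factor $2$ to $2^{k^\xi(\omega)}$. So under $\mu_0$, the pair $(\sfa|_{\overline\Lambda\setminus\{z\}},\omega|_{\overline E(\Lambda)\setminus E_z})$ has density proportional to
\[
2^{k'(\omega)}\prod_{xy\notin E_z}\sqrt{1-p_{xy}(\sfa)}\Big(\tfrac{p_{xy}(\sfa)}{1-p_{xy}(\sfa)}\Big)^{\omega_{xy}}\prod_{x\in\Lambda\setminus\{z\}}\mathrm d\rho(\sfa_x),
\]
where $k'$ counts the clusters of the graph with $z$ removed. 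If $\rho(\{0\})=0$, I interpret $\Psi[\,\cdot\mid\sfa_z=0]$ via this density (the regular conditional version at $\sfa_z=0$), which is continuous in $\sfa_z$.

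\textbf{Step 2: the edge-closed conditioning is a tilt of $\mu_0$.} Take the joint density of $\Psi$ on the event $\{\omega_{xz}=0\ \forall x\sim z\}$. On this event $z$ is again an isolated cluster, and the only dependence on $\sfa_z$ is through the factor
\[
\prod_{x\sim z}\sqrt{1-p_{xz}(\sfa)}=\exp\Big(-\beta\sfa_z\sum_{x\sim z}\sfa_x\Big)
\]
and through $\mathrm d\rho(\sfa_z)$. Integrating out $\sfa_z$ (note that $f$ depends only on $\omega$, and $\omega|_{E_z}\equiv0$ in both conditionings) gives
\[
\Psi[f\mid \omega_{xz}=0\ \forall x\sim z]=\frac{\mu_0[f\,W]}{\mu_0[W]},\qquad W(\sfa)=\int_{\mathbb R^+}\exp\Big(-\beta u\sum_{x\sim z}\sfa_x\Big)\,\mathrm d\rho(u).
\]
Here $\int_{\mathbb R^+}\cdot\,\mathrm d\rho$ should be understood with the evenness normalisation used for the absolute value field. $W$ is finite and positive, and it is a decreasing function of $\sfa$, since $u\ge0$ and $\beta\ge0$. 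It depends only on the values $\sfa_x$ of the neighbours of $z$ (including fixed boundary values when $x\in\partial^{\mathrm{ext}}\Lambda$, where it is then simply constant in those coordinates).

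\textbf{Step 3: FKG.} By Proposition~\ref{prop: FKG} with $\Lambda'=\{z\}$, $E'=\emptyset$ and $\eta=0$, the measure $\mu_0$ satisfies the FKG inequality on pairs $(\sfa,\omega)$. Since $f$ is increasing and $-W$ is increasing (and bounded), we get $\mu_0[fW]\le\mu_0[f]\,\mu_0[W]$. Dividing by $\mu_0[W]>0$ yields exactly \eqref{eq: zero conditioning}.

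The only genuinely delicate point is Step 2, specifically the bookkeeping of the normalisations. One must check that the cluster count, the factor $2$ for the isolated vertex $z$, and the boundary partition $\xi$ coincide in the two conditionings, so that the ratio really is a pure tilt of $\mu_0$ by $W$. One must also handle the conditioning on the null event $\{\sfa_z=0\}$ when $\rho$ has no atom at $0$. The FKG step itself is immediate from the earlier results.
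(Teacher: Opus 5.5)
Your proof is correct and follows essentially the same route as the paper. Both arguments write the measure conditioned on $C_z$ as a tilt of the $z$-deleted measure $\Psi[\,\cdot \mid \sfa_z=0]$ (which the paper identifies with $\Psi_{\Lambda\setminus\{z\},\beta}^{(\xi,\sfb\cup 0_z)}$ via the domain Markov property) by the decreasing weight $\prod_{x\sim z}\sqrt{1-p_{xz}(\sfa)}$, and then conclude by the FKG inequality; the only cosmetic difference is that the paper applies FKG for each fixed $\sfa_z$ before integrating against $\mathrm{d}\rho(\sfa_z)$, whereas you integrate first to form $W$ and apply FKG once.
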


\begin{proof}
    Let $C_z$ be the event that all edges incident to $z$ are closed and let $\sfb \cup 0_z \in (\mathbb{R}^{+})^{z \cup \partial^{\mathrm{ext}} \Lambda}$ be equal to $\sfb$ on $\partial^{\mathrm{ext}} \Lambda$ and $0$ on $z$.
    Let $N(z, \xi)$ be the number of elements of $\xi$ that only contain vertices which are not in $\partial^{\mathrm{ext}} (\Lambda \setminus \{z\})$.
    Observe that
    \begin{align*}
        \Psi_{\Lambda, \beta}^{(\xi, \sfb)} [f(\omega) \mathbbm{1}_{C_z}]
        &= \frac{2^{N(z, \xi)} Z_{\Lambda \setminus \{z\}, \beta}^{(\xi, \sfb\cup0_z)}}{Z_{\Lambda, \beta}^{(\xi, \sfb)}} \int_{\sfa_z = 0}^{\infty} \Psi_{\Lambda \setminus \{z\} , \beta}^{(\xi, \sfb \cup 0_z)} 
        \left[ f(\omega) \prod_{x \sim z} \sqrt{1- p_{xz}(\sfa)} \right] \mathrm{d} \rho(\mathsf{a}_z).
    \end{align*}
    Since $\prod_{x \sim z} \sqrt{1- p_{xz}(\mathsf{a})}$ is a decreasing function (for fixed $\mathsf{a}_z$), we can apply the FKG inequality to get
    \begin{align*}
        \Psi_{\Lambda, \beta}^{(\xi, \sfb)} [f(\omega) \mathbbm{1}_{C_z}] &\leq
        \frac{2^{N(z, \xi)} Z_{\Lambda \setminus \{z\}, \beta}^{(\xi, \sfb\cup0_z)}}{Z_{\Lambda, \beta}^{(\xi, \sfb)}} \Psi_{\Lambda \setminus \{z\} , \beta}^{(\xi, \sfb \cup 0_z)}[f(\omega)] \int_{\mathsf{a}_z = 0}^{\infty} \Psi_{\Lambda \setminus \{z\} , \beta}^{(\xi, \sfb \cup 0_z)} 
        \left[ \prod_{x \sim z} \sqrt{1- p_{xz}(\mathsf{a})} \right] \mathrm{d} \rho(\mathsf{a}_z)\\
        &= \Psi_{\Lambda \setminus \{z\} , \beta}^{(\xi, \sfb \cup 0_z)}[f(\omega)] \Psi_{\Lambda, \beta}^{(\xi, \sfb)}[C_z],
    \end{align*}
    hence
    \begin{equation*}
        \Psi_{\Lambda, \beta}^{(\xi, \sfb)}[f(\omega) \mid C_z] \leq \Psi_{\Lambda \setminus \{z\} , \beta}^{(\xi, \sfb \cup 0_z)}[f(\omega)] = \Psi_{\Lambda, \beta}^{(\xi, \sfb)}[f(\omega) \mid \mathsf{a}_z = 0],
    \end{equation*}
    where the last equality follows from the domain Markov property \eqref{eq: DMP}.
\end{proof}

\section{Non-triviality of phase transition}\label{sec:non-triviality}

In this section, we show that the phase transition on a graph $G$ of bounded degree is non-trivial if the phase transition for Bernoulli percolation is non-trivial. To do this, we will use the coupling between the spin model and the random cluster representation, in particular the fact that $\beta_c = \inf\{\beta \geq 0 : \, \Psi_{\beta}^{1}[o \longleftrightarrow \infty] > 0\}$ by Remark \ref{remark: infinite coupling}. We write $\mathbb{P}_p$ to denote the law of Bernoulli bond percolation, where each edge is open with probability $p$ independently of other edges, and write $o \longleftrightarrow \infty$ for the event that the cluster containing $o$ is infinite. Let $p_c$ denote the corresponding percolation threshold, i.e.\ $p_c=\inf\{p\in [0,1]: \, \mathbb{P}_p[o\longleftrightarrow\infty]>0\}$.
We denote the vertex and edge marginals of $\Psi_{\beta}^{1}$ by $\chi_{\beta}^{1}$ and $\Phi_{\beta}^{1}$ respectively. 

\begin{proposition}\label{prop:non-trivial}
Let $G=(V,E)$ be a connected graph of bounded degree. Then $\beta_c>0$. Furthermore, if $p_c<1$, then $\beta_c<\infty$. In particular, $\beta_c\in (0,\infty)$ on $\mathbb{Z}^d$ for every $d\geq 2$, and more generally, on every transitive graph of superlinear growth.
\end{proposition}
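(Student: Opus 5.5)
Both bounds will be proved by comparing the edge marginal $\Phi^1_\beta$ with Bernoulli bond percolation, using Remark~\ref{remark: infinite coupling}: $\beta_c$ is the threshold for $\Psi^1_\beta[o\longleftrightarrow\infty]>0$. Since $o\longleftrightarrow\infty$ is increasing and depends only on edges, it suffices to show $\Phi^1_\beta\preceq\mathbb{P}_{p}$ with $p<p_c$ for small $\beta$, and $\Phi^1_\beta\succeq$ a supercritical percolation for large $\beta$. The supercritical percolation will be Bernoulli bond percolation intersected with an independent site thinning.

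\emph{Upper bound, $\beta_c>0$.} Work in finite volume with $\Psi^{(w,\sfb)}_{\Lambda,\beta}$, $\sfb$ of the type in Proposition~\ref{prop: plus convergence}. For $x,y\in\Lambda$, a path of length $k$ from $o$ out to distance $k$ can be open only if each of its edges $xy$ is open. Conditionally on $\sfa$, the random-cluster measure $\phi^\xi_{\Lambda,\beta,\sfa}$ is dominated by independent percolation with parameters $p_{xy}(\sfa)\le 2\beta\sfa_x\sfa_y$. Hence
\[
\Psi[o\leftrightarrow \partial\Lambda_k]\le\sum_{\gamma}\chi\Big[\prod_{xy\in\gamma}\min(1,2\beta\sfa_x\sfa_y)\Big],
\]
where the sum runs over self-avoiding paths $\gamma$ of length $k$ from $o$. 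By the regularity estimate (Proposition~\ref{prop: regularity}), restricted to vertices far from $\partial\Lambda$ where $A=1$, the law of $\sfa$ on $\gamma$ is bounded by $e^{B|\gamma|}$ times a product measure $\rho_a$. This gives a bound of the form $D^k\,(e^{2B}C\beta)^{k}$, where $C$ comes from moments of $\rho_a$; the product over edges uses each vertex at most twice, so Cauchy--Schwarz or $\sfa_x\sfa_y\le(\sfa_x^2+\sfa_y^2)/2$ suffices. One must check that $B$ stays bounded as $\beta\downarrow0$ with $a$ fixed (say $a=1$, valid once $2D\beta\le1$). Then the bound decays for $\beta$ small, uniformly in $\Lambda$. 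Passing to the limit gives $\Psi^1_\beta[o\leftrightarrow\infty]=0$. This step is where most care is needed: the uniformity of the regularity constant in $\beta$ and the handling of the boundary, which is avoided by taking $\Lambda\gg\Lambda_k$.

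\emph{Lower bound, $\beta_c<\infty$ if $p_c<1$.}
\begin{itemize}
\item Fix $t>0$ with $\rho([t,\infty))>0$, which is possible by (iii) and evenness.
\item Show that under $\chi$, uniformly in $\beta\ge\beta_0$, the field $\mathbbm{1}_{\sfa_x\ge t}$ dominates a Bernoulli site percolation of parameter $q$ close to $1$ for large $\beta$. For this, apply weak monotonicity (Proposition~\ref{prop: weak monotonicity}) and condition on everything else. The conditional law of $\sfa_x$ has density proportional to $Z^{\mathrm{Ising}}$ times $\rho$, and in the worst case all neighbours have $\sfa=0$. This is too weak, so instead condition on the neighbours being large.
\item Given $\sfa$, the edges dominate independent percolation with parameter $p_{xy}/(2-p_{xy})$ (standard comparison for $q=2$ random cluster measures). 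Edges with $\sfa_x,\sfa_y\ge t$ therefore have parameter close to $1$ for $\beta$ large.
\end{itemize}
Combining these, $\Phi^1_\beta$ dominates mixed site-bond percolation with parameters $(q,p)$ near $(1,1)$, which percolates when $p_c<1$ by standard mixed-percolation comparisons (e.g.\ \cite{DGRSY20}-type results or a Peierls-type argument on the site variables). The main obstacle is the second item: obtaining a lower bound on $\chi[\sfa_x\ge t\mid\text{rest}]$ uniform in the conditioning.

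I would instead use the Ising-type bound $\chi[\sfa_x< t\mid \cdot]$. At worst, all neighbours are zero, and then the conditional law of $\sfa_x$ is just $\rho$, so $\chi[\sfa_x\ge t\mid\cdot]\ge\rho[|u|\ge t]/\rho(\mathbb R)=:q_0>0$. This is by monotonicity in $\sfa$ of the Ising partition function ratio, as in the proof of Proposition~\ref{prop: FKG}. Since $q_0$ need not be near $1$, I would then use a renormalisation or block argument: on a graph with $p_c<1$, site-bond percolation with site parameter $q_0$ and bond parameter $\to1$ still percolates. This can be seen by Peierls counting of closed cut-sets in the ``$\sfa$ large'' clusters, once $\beta$ is large. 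Where $q_0$ is too small for this, the first bullet's idea should be revisited using that a large neighbour pushes $\sfa_x$ up through the factor $\cosh(\beta\sfa_x\sfa_y)$. Finally, the claims for $\mathbb Z^d$ and for transitive graphs of superlinear growth follow from $p_c<1$ there \cite{DGRSY20}.
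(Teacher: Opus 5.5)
\textbf{Upper bound.} Your proof of $\beta_c>0$ is correct, but it takes a different route from the paper. The paper dominates $\chi^1_\beta$ by a product measure and opens every edge with an endpoint above a truncation level $M$. Elsewhere it uses $p_{xy}\le 1-e^{-2\beta M^2}$. It then compares the resulting $1$-dependent edge process with subcritical Bernoulli percolation via Liggett--Schonmann--Stacey \cite{LSS97}.

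Your first-moment argument has three ingredients: domination of $\phi^{\xi}_{\Lambda,\beta,\sfa}$ by independent percolation with parameters $p_{xy}(\sfa)\le 2\beta\sfa_x\sfa_y$, a union bound over self-avoiding paths, and factorisation of $\prod_{xy\in\gamma}\sfa_x\sfa_y$ under the product measure of Proposition~\ref{prop: regularity}. It needs neither the truncation nor \cite{LSS97}. It uses only the density bound as stated, not a stochastic domination by a product measure, and it gives exponential decay of $\Psi[o\longleftrightarrow\partial\Lambda_k]$ directly.

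The uniformity in $\beta$ that you flag is immediate. Since $\prod_{xy\in\gamma}\min(1,2\beta\sfa_x\sfa_y)$ is increasing in $\sfa$, Corollary~\ref{cor:monotonicity in beta} lets you replace $\chi_\beta$ by $\chi_{\beta_1}$ for a fixed $\beta_1\ge\beta$. You can then use the regularity constants at $\beta_1$, with $a>2D\beta_1$.

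\textbf{Lower bound: the gap.} Your proof of $\beta_c<\infty$ has a genuine gap. You end up relying on the claim that site--bond percolation with site parameter $q_0$ bounded away from $1$ percolates once the bond parameter is close to $1$. This is false. At bond parameter $1$ it is Bernoulli site percolation with parameter $q_0$, which has no infinite cluster when $q_0<p_c^{\mathrm{site}}(G)$. Lowering the bond parameter only removes open edges, so no Peierls count of cut-sets can rescue it. Conditioning on the neighbours being large does not help either, because stochastic domination needs a lower bound on conditional probabilities that is uniform over the conditioning.

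The observation you are missing, which is exactly what the paper uses, is that $t$ should not be fixed. For $t=\varepsilon\downarrow0$ one has $q_0=\rho(|u|\ge\varepsilon)/\rho(\mathbb{R})\to1$ when $\rho(\{0\})=0$. Now take $\beta$ large and consider the edge process in which $xy$ is open iff $\sfa_x,\sfa_y\ge\varepsilon$ (with $\sfa\sim\chi^1_0$) and an independent coin of parameter $\frac{1-e^{-2\beta\varepsilon^2}}{1+e^{-2\beta\varepsilon^2}}$ succeeds. This process is dominated by $\Phi^1_\beta$, by your own two comparisons $\chi^1_\beta\succeq\chi^1_0$ and $p_{xy}/(2-p_{xy})$. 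It is $1$-dependent with marginals close to $1$, so by \cite{LSS97} it dominates $\mathbb{P}_p$ with $p>p_c$. On a general graph with $p_c<1$, \cite{LSS97} is the right tool for this step; \cite{DGRSY20} and a Peierls argument are not.

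\textbf{The atomic case.} Your concern about small $q_0$ is nonetheless legitimate when $\rho$ has an atom at $0$, for example Blume--Capel with $\Delta$ very negative. Then $q_0\le 1-\rho(\{0\})/\rho(\mathbb{R})$ for every $t$, so no comparison with $\chi^1_0$ can give marginals close to $1$. The paper's written argument rests on the same comparison and does not cover this case either.

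Handling it requires using the $\beta$-dependence of $\chi^1_\beta$. One available input is a bound from Griffiths' first inequality: given $\sfa$ elsewhere, the density of the conditional law of $\sfa_x$ with respect to $\rho$ at $s$ is at least $\cosh(\beta s\max_{y\sim x}\sfa_y)$ times its value at $0$. This would have to be combined with a genuine estimate on connected regions where $\sfa$ is small. Your last sentence points in this direction but gives no argument, so as written the lower bound is not established.
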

\begin{proof}
We first show that $\beta_c>0$, and for that we use the fact that $p_c>0$ on graphs of bounded degree. Consider some $\beta\in (0,1)$ which we will choose below to be small enough. By regularity, there exist constants $a,b>0$ such that for every $\beta\in (0,1)$, the absolute value field $\chi^1_{\beta}$ is stochastically dominated by the product measure $\nu_{a,b}$ with density proportional to $\prod_{x\in V} e^{at_x^2}\mathbbm{1}_{t_x\geq b}\mathrm{d}\rho(t_x-b)$. We define a percolation model $\mathbb{Q}$ as follows. We first consider some constant $M>0$ to be defined and a field $\sfa\sim \nu_{a,b}$. For every edge $xy \in E$ with $\sfa_x \geq M$ or $\sfa_y \geq M$, we set $\omega_{xy} = 1$. 
If $\sfa_x,\sfa_y< M$, then set $\omega_{xy}=1$ with probability $r=1-e^{-2\beta M^2}$, independently of all other edges. Note that $\Phi^1_{\beta}$ is stochastically dominated by $\mathbb{Q}$. Indeed, this follows from the fact that  $\Psi^1_{\beta}[\omega_{xy}=1 \mid \sfa, \omega|_{E\setminus \{xy\}}]\leq p_{xy}(\sfa)$, and $p_{xy}(\sfa)\leq r$ whenever $\sfa_x,\sfa_y< M$. Now $\mathbb{Q}$ is a $1$-dependent bond percolation measure. By \cite{LSS97}, it is stochastically dominated by $\mathbb{P}_p$ for some $p\in (0,1)$. By choosing $M$ to be large enough and then $\beta>0$ to be small enough, we can ensure that $\mathbb{Q}[\omega_e=1]$ is small enough that $p<p_c$. This implies that $\Psi^1_{\beta}[o\longleftrightarrow\infty]=0$, hence $\beta_c\geq \beta>0$.

We now show that if $p_c<1$, then $\beta_c<\infty$. By monotonicity in $\beta$, $\chi^1_{\beta}$ stochastically dominates $\chi^1_0$. We define a percolation model $\mathbb{Q}'$ as follows. We first consider an $\varepsilon>0$ such that $\rho([\varepsilon,\infty))>0$ and a field $\sfa\sim \chi^1_0$. For every $x\in V$ such that $\sfa_x<\varepsilon$, we set $\omega_{xy}=0$ for every neighbour $y$ of $x$, and for every $xy\in E$ such that $\sfa_x,\sfa_y\geq \varepsilon$, we set $\omega_{xy}=1$ with probability $\frac{1-e^{-2\beta \varepsilon^2}}{1+e^{-2\beta \varepsilon^2}}$ independently of all other edges. Using the fact that 
\[\Psi^1_{\beta}[\omega_{xy}=1 \mid \sfa, \omega|_{E\setminus \{xy\}}] \geq \frac{p_{xy}(\sfa)}{2 - p_{x,y}(\sfa)} =\frac{1-e^{-2 \beta \sfa_x \sfa_y}}{1+e^{-2 \beta \sfa_x \sfa_y}},\] 
it follows as above that $\Phi^1_{\beta}$ stochastically dominates $\mathbb{Q}'$. Now $\mathbb{Q}'$ is a $1$-dependent bond percolation measure. By \cite{LSS97}, it stochastically dominates $\mathbb{P}_p$ for some $p\in (0,1)$. By choosing $\beta$ to be large enough and $\varepsilon$ to be small enough, we can ensure that $\mathbb{Q}'[\omega_e=1]$ is close enough to $1$ that $p>p_c$. This implies that $\Psi^1_{\beta}[o\longleftrightarrow\infty]>0$, hence $\beta_c\leq \beta$.

The fact that $p_c<1$ on every transitive graph of superlinear growth follows from \cite{DGRSY20,EST25}. This implies that $\beta_c\in (0,\infty)$ on these graphs, in particular on $\mathbb{Z}^d$ for every $d\geq 2$, and completes the proof.
\end{proof}

\section{Proof of OSSS inequality}
\label{section: OSSS}
The OSSS inequality was originally proved for product measures \cite{OSSS}, and has subsequently been extended to the random cluster \cite{sharpness_annals} and dilute random cluster \cite{Blume-Capel} models.
In this section, we prove Theorem \ref{Thm: OSSS}, which gives an OSSS inequality for measures satisfying the monotonicity properties of Section \ref{Section: monotonicity}.

Before proving Theorem \ref{Thm: OSSS}, we first introduce some notation and definitions. Let $\Lambda$ be a finite subset of $V$ and let $n = |\Lambda \cup \overline{E}(\Lambda)|$. Define $\overrightarrow{\Lambda}$ to be the set of sequences of elements in $\Lambda \cup \overline{E}(\Lambda)$ where each element appears exactly once and for each edge $xy \in \overline{E}(\Lambda)$, both endpoints $x$ and $y$ appear before $xy$ (if one endpoint is outside $\Lambda$, then we only require that the other endpoint appears before $xy$). For an $n-$tuple $x = (x_1, \ldots, x_n)$ and $t \leq n$, we write $x_{[t]} =(x_1, \ldots, x_t)$ and $(\sfa, \omega)_{x_{[t]}} = (\sfa|_{\Lambda \cap x_{[t]}}, \omega|_{\overline{E}(\Lambda) \cap x_{[t]}})$.

\begin{definition}
    A \textit{decision tree} on $\Lambda$ is a pair $T = (x_1, (\psi_t)_{2 \leq t \leq n})$, where $x_1 \in \Lambda \cup \overline{E}(\Lambda)$ and for $2 \leq t \leq n$, $\psi_t$ is a function that takes as input $x_{[t-1]}$ and  $(\sfa, \omega)_{x_{[t-1]}}$ and returns an element $x_t \in \Lambda \cup \overline{E}(\Lambda) \setminus \{x_1, \ldots, x_{t-1}\}$.
    \begin{itemize}
    \item
    We say that $T$ is \textit{admissible} if it reveals edges only after any endpoints in $\Lambda$ have been revealed, so that the resulting sequence $x_1, \ldots, x_n$ is in $\overrightarrow{\Lambda}$.
    \item
    For a function $f: \{0, 1\}^{\overline{E}(\Lambda)} \rightarrow [0,1]$, we define
    \[
    \tau_{f, T}(\sfa, \omega) = \min \{t \geq 1: \forall \omega' \in \{0, 1\}^{\overline{E}(\Lambda)}, \omega'_{x_{[t]}} = \omega_{x_{[t]}} \Rightarrow f(\omega') = f(\omega)\}.
    \]
    This is the first time at which the value of $f$ is determined by the decision tree $T$.
    \item
    For $X \in \Lambda \cup \overline{E}(\Lambda)$, we write $R_{f, T}(X)$ for the event that there exists $t \leq \tau_{f, T}(\sfa, \omega)$ such that $x_t = X$. For $xy \in \overline{E}(\Lambda)$, we define
    \begin{align*}
         \delta_{xy}(f, T) = \mu[R_{f,T}(xy)] + \frac{\mu[R_{f,T}(x)]}{\mu[C_x]} + \frac{\mu[R_{f,T}(y)]}{ \mu[C_y]},
    \end{align*}
    where $C_x = \{\omega_{xy} = 0 \ \forall \, y \sim x \}$ and we interpret $\frac{\mu[R_{f, T}(x)]}{\mu[C_x]}$ as $0$ if $x \notin \Lambda$.
    \end{itemize}
\end{definition}

We now show how we can construct $(\sfa, \omega)$ distributed according to $\mu$ on $(\mathbb{R}^{+})^{\Lambda} \times \{0,1\}^{\overline{E}(\Lambda)}$ from i.i.d uniform random variables, similarly to \cite[Lemma 2.1]{sharpness_annals}.
For $u \in [0,1]^{n}$, $x \in \overrightarrow{\Lambda}$, define $F_{x}^{\mu}(u) = (\tilde{\sfa}, \tilde{\omega})$ inductively for $1 \leq t \leq n$ according to the following rules.
If $x_t \in \overline{E}(\Lambda)$, then we set
\begin{equation*}
\tilde{\omega}_{x_t} = \begin{cases}
        1 & \text{ if } u_t \geq \mu[\omega_{x_t} = 0 \mid  (\sfa, \omega)_{x_{[t-1]}}= (\tilde{\sfa}, \tilde{\omega})_{x_{[t-1]}}], \\
        0 & \text{ otherwise,}
    \end{cases}
\end{equation*}
and if $x_t \in \Lambda$, we set $\tilde{\sfa}_{x_t} = Q(u_t)$, where $Q$ is the quantile function given by
\begin{equation*}
    Q(r)=\inf\{\lambda\in \mathbb{R}: \, \mu[\sfa_{x_t} \leq \lambda\mid ( \sfa, \omega)_{x_{[t-1]}}= (\tilde{\sfa}, \tilde{\omega})_{x_{[t-1]}}]\geq r\}.
\end{equation*}
In the proof of Theorem \ref{Thm: OSSS}, we will consider $F_{\bfx}^{\mu}$, where $\bfx$ is a random exploration determined by the decision tree.
\begin{lemma}
    \label{Lemma: F_x(U)}
    Let $U= (U_1, \ldots, U_n)$ be a sequence of i.i.d Unif([0,1]) random variables and let $\bfx$ be a random variable taking values in $\overrightarrow{\Lambda}$. Assume that for every $1 \leq t \leq n$, $U_t$ is independent of $(\bfx_{[t]}, U_{[t-1]})$. Then $F_{\bfx}^{\mu}(U)$ has law $\mu$.
\end{lemma}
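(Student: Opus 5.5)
We will show by induction on $t$ that, writing $(\tilde{\sfa},\tilde{\omega}) = F^{\mu}_{\bfx}(U)$, the pair $(\bfx_{[t]}, (\tilde{\sfa},\tilde{\omega})_{\bfx_{[t]}})$ has the property that, conditionally on $\bfx_{[t]} = x_{[t]}$, the revealed values $(\tilde{\sfa},\tilde{\omega})_{x_{[t]}}$ are distributed as the corresponding marginal of $\mu$ on the coordinates $x_{[t]}$. This is the same structure as \cite[Lemma 2.1]{sharpness_annals}. Taking $t=n$, every coordinate is revealed. Since the conditional law of the full configuration given $\bfx = x$ is then $\mu$ for every $x$, averaging over $x$ gives that $F^{\mu}_{\bfx}(U)$ has law $\mu$.

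First we note that $(\tilde{\sfa},\tilde{\omega})_{\bfx_{[t]}}$ is a measurable function of $(\bfx_{[t]}, U_{[t]})$. This holds because the $t$-th step of the construction uses only $x_{[t]}$, $u_t$ and the previously built values. Next comes the one-step fact, valid for any fixed past $(x_{[t-1]}, c)$. If $x_t$ is an edge, then $\mathbbm{1}\{U_t \geq \mu[\omega_{x_t}=0\mid (\sfa,\omega)_{x_{[t-1]}}=c]\}$ is Bernoulli with the correct conditional probability. If $x_t$ is a vertex, then $Q(U_t)$ has the conditional law of $\sfa_{x_t}$ given $(\sfa,\omega)_{x_{[t-1]}}=c$, by the standard quantile transform (valid for any distribution on $\mathbb{R}^+$, with atoms allowed, as for Blume--Capel). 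Here we need to fix a regular version of the conditional distributions of $\mu$, which exists because the space is Polish. All conditional laws in the construction refer to this version, and the construction is defined even on null sets.

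For the induction step, condition on $\bfx_{[t]} = x_{[t]}$ and $U_{[t-1]}$. By hypothesis $U_t$ is independent of $(\bfx_{[t]}, U_{[t-1]})$, so it is still uniform under this conditioning. Meanwhile the past values $(\tilde{\sfa},\tilde{\omega})_{x_{[t-1]}}$ are determined by the conditioning. The one-step fact therefore gives that the new coordinate has the $\mu$-conditional law given the past.

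The main obstacle is that conditioning on $\bfx_{[t]}$ also conditions on the choice of $x_t$, which may depend on the past values. We therefore need the induction hypothesis in the form: given $\bfx_{[t]} = x_{[t]}$, the past values still have the $\mu$-marginal on $x_{[t-1]}$. To obtain this, we would strengthen the hypothesis. The natural route is to show that the joint law of $(\bfx_{[t]},\text{values})$ has the form $\mathbb{P}[\bfx_{[t]}=x_{[t]}\mid \text{values}, \ldots]$ times the $\mu$-marginal. Equivalently, we can realise $\bfx$ through a decision-tree-like kernel. With the stated independence only, however, the conditional law of the values given $\bfx_{[t]}$ can be biased, since $\bfx_t$ may depend on $U_{[t-1]}$ and thus on the values. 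The fix I would try is to compute the joint density directly. We write the law of $(\bfx, U)$ as a product over $t$ of the kernel of $\bfx_t$ given $(\bfx_{[t-1]},U_{[t-1]})$, followed by the uniform density of $U_t$ independent of $(\bfx_{[t]},U_{[t-1]})$. We then integrate out $U_t$ step by step, starting from $t=n$ and working backwards. This expresses $\mathbb{E}[g(F^{\mu}_{\bfx}(U))]$ as an iterated integral. Integrating the innermost $U_n$ replaces $g$ by its $\mu$-conditional expectation given the first $n-1$ coordinates, which depends only on values and no longer on the order of exploration. The kernels of $\bfx_n$ then sum to one. Iterating this down to $t=1$ yields $\mu[g]$. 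This backward-integration argument is the version I would actually write, with the forward induction serving as intuition.
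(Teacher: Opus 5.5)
Your backward-integration argument is correct and is essentially the paper's proof. The paper proves by backward induction on $t$, from the trivial case $t=n$ down to $t=0$, that conditionally on $\bfx_{[t]}=x_{[t]}$ and on the first $t$ revealed values, $F^{\mu}_{\bfx}(U)$ has the $\mu$-conditional law given those values; its inductive step decomposes over $\bfx_t$ and uses the independence of $U_t$ from $(\bfx_{[t]},U_{[t-1]})$, exactly as your step of integrating out $U_t$ and then summing the kernel of $\bfx_t$.

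Drop the forward statement of your first paragraph entirely rather than keep it as intuition. It fails even for genuine decision trees: if $\bfx_2$ is chosen according to whether $\tilde{\sfa}_{\bfx_1}>1$, then given $\bfx_2$ the law of $\tilde{\sfa}_{\bfx_1}$ is in general biased, and the law of the configuration given $\bfx=x$ is not $\mu$. So no strengthening of the hypothesis rescues it, and the correct invariant must condition on the revealed values as well as on $\bfx_{[t]}$.
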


\begin{proof}
    Let $x \in \overrightarrow{\Lambda}$, $(\sfa', \omega') \in (\mathbb{R}^{+})^{\Lambda} \times \{0,1\}^{\overline{E}(\Lambda)}$ and $(\tilde{\sfa}, \tilde{\omega}) = F_{\bfx}^{\mu}(U)$. Write $\mathbb{P}_{t, x} = \mathbb{P}_{t, x, \sfa', \omega'}$ for the law of $U$ and $\bfx$ conditioned on $\bfx_{[t]} = x_{[t]}$ and $(\tilde{\sfa}, \tilde{\omega})_{x_{[t]}} = (\sfa', \omega')_{x_{[t]}}$.
    We aim to prove by induction on $t$ that for any $t \in \{0, \ldots, n\}$,
    \begin{equation}
    \label{eq: construction lemma hypothesis}
    \begin{aligned}
    \mathrm{d}\mathbb{P}_{t, x}[(\tilde{\sfa}, \tilde{\omega}) = (\sfa', \omega')] = \mathrm{d}\mu[(\sfa, \omega) = (\sfa', \omega') \mid (\sfa, \omega)_{x_{[t]}} = (\sfa', \omega')_{x_{[t]}}] \\\text{ for all } x \in \Lambda, (\sfa', \omega') \in (\mathbb{R}^{+})^{\Lambda} \times \{0,1\}^{\overline{E}(\Lambda)}.
    \end{aligned}
    \end{equation}
    The statement of the lemma follows by taking $t=0$ in \eqref{eq: construction lemma hypothesis}.
    To prove \eqref{eq: construction lemma hypothesis}, note that the case $t = n$ is trivial. Now assume \eqref{eq: construction lemma hypothesis} is true for $t$. By decomposing over the value of $\bfx_t \in \overline{E}(\Lambda) \cup \Lambda$, we have
    \begin{equation}
    \label{eq: construction lemma induction}
    \begin{aligned}
        \mathrm{d} \mathbb{P}_{t-1, x}[(\tilde{\sfa}, \tilde{\omega}) = (\sfa', \omega')] =\\ \sum_{y \in \overline{E}(\Lambda) \setminus \{x_1, \ldots, x_{t-1} \}} \mathrm{d}\mathbb{P}_{t, x^{(t, y)}}[(\tilde{\sfa}, \tilde{\omega}) = (\sfa', \omega')] \mathbb{P}_{t-1, x}[ \tilde{\omega}_{y} = \omega'_y \mid \bfx_t = y] \mathbb{P}_{t-1, x}[\bfx_t = y]\\
        + \sum_{y \in \Lambda \setminus \{x_1, \ldots, x_{t-1}\}} \mathrm{d}\mathbb{P}_{t, x^{(t, y)}}[(\tilde{\sfa}, \tilde{\omega}) = (\sfa', \omega')] \mathrm{d}\mathbb{P}_{t-1, x}[ \tilde{\sfa}_{y} = \sfa'_y \mid \bfx_t = y] \mathbb{P}_{t-1, x}[\bfx_t = y],
    \end{aligned}
    \end{equation}
    where $(x^{(t, y)})_t = y$ and $(x^{(t, y)})_i = x_i$ for $i \neq t$.
    If $y \in \overline{E}(\Lambda)$, then from the definition of $\tilde{\omega}$,
    \begin{equation}
    \label{eq: construction lemma edges}
    \begin{aligned}
        \mathbb{P}_{t-1, x}[\tilde{\omega}_y = 1 \mid \bfx_t = y] &= \mathbb{P}_{t-1, x}[U_t \geq \mu[\omega_y = 0 \mid (\sfa, \omega)_{\bfx_{[t-1]}} = (\tilde{\sfa}, \tilde{\omega})_{\bfx_{[t-1]}}] \mid \bfx_t = y]\\
        &= \mu[\omega_y = 1 \mid (\sfa, \omega)_{x_{[t-1]}} = (\sfa', \omega')_{x_{[t-1]}}],
    \end{aligned}
    \end{equation}
    where we have used that $U_t$ is independent of $(\bfx_{[t]}, U_{[t-1]})$.
    If $y \in \Lambda$, then by definition of $\tilde{\sfa}$ and independence of $U_t$ from $(\bfx_{[t]}, U_{[t-1]})$, we get
    \begin{equation}
        \label{eq: construction lemma vertices}
        \mathrm{d}\mathbb{P}_{t-1, x}[ \tilde{\sfa}_{y} = \sfa'_y \mid \bfx_t = y] = \mathrm{d}\mu[\sfa_y = \sfa'_y \mid (\sfa, \omega)_{x_{[t-1]}} = (\sfa', \omega')_{x_{[t-1]}}].
    \end{equation}
    Substituting \eqref{eq: construction lemma edges} and \eqref{eq: construction lemma vertices} in \eqref{eq: construction lemma induction} and using the inductive hypothesis \eqref{eq: construction lemma hypothesis} for $\mathbb{P}_{t, x^{(t, y)}}[(\tilde{\sfa}, \tilde{\omega}) = (\sfa', \omega')]$ yields
    \begin{align*}
        &\mathrm{d} \mathbb{P}_{t-1, x}[(\tilde{\sfa}, \tilde{\omega}) = (\sfa', \omega')]\\
        &= \mathrm{d} \mu[(\sfa, \omega) = (\sfa', \omega') \mid (\sfa, \omega)_{x_{[t-1]}} = (\sfa', \omega')_{x_{[t-1]}}] \sum_{y \in (\Lambda \cup \overline{E}(\Lambda))\setminus \{x_1, \ldots, x_{t-1}\}} \mathbb{P}_{t-1, x}[\bfx_t = y]\\
        &= \mathrm{d} \mu[(\sfa, \omega) = (\sfa', \omega') \mid (\sfa, \omega)_{x_{[t-1]}} = (\sfa', \omega')_{x_{[t-1]}}].
    \end{align*}
\end{proof}

We now prove the OSSS inequality of Theorem~\ref{Thm: OSSS} which plays a crucial role in proving Theorem~\ref{thm: main theorem}. We follow a strategy similar to that of \cite{sharpness_annals}. The main challenge compared to the OSSS inequality for monotonic measures arises from the fact that we now explore both vertices and edges. The contribution of edges can be analysed as in \cite{sharpness_annals}. To handle the contribution of vertices, we use our assumption \eqref{eq: zero conditioning} to get an upper bound that involves covariances of the form $\mathrm{Cov}_{\mu} (f, \omega_{xy})$ rather than covariances of $f$ with a function of $\sfa_x$, at the expense of introducing the term $1/\mu[C_x]$. Finally, since we are exploring both edges and vertices, the revealment naturally includes both the probability of exploring an edge and the probability of exploring a vertex.

\begin{proof}[Proof of Theorem \ref{Thm: OSSS}]
    Let $U = (U_1, \ldots, U_n)$ and $V = (V_1, \ldots, V_n)$ be two independent sequences of i.i.d Unif([0,1]) random variables. Write $\mathbb{P}$ for the coupling between these variables and $\mathbb{E}$ for its expectation.
    Define $\mathbf{x} \in \overrightarrow{\Lambda}$, $\tilde{\omega} \in \{0,1\}^{\overline{E}(\Lambda)}$ and $\tilde{\sfa} \in (\mathbb{R}^{+})^{\Lambda}$ inductively as follows.
    We determine $\mathbf{x}$ using the decision tree, so that
    \begin{equation*}
        \mathbf{x}_t =
        \begin{cases}
             x_1 &  \text{ if } t = 1,\\
             \psi_t(\mathbf{x}_{[t-1]}, (\tilde{\sfa}, \tilde{\omega})_{\mathbf{x}_{[t-1]}}) & \text{ if } t \geq 2,
        \end{cases}
    \end{equation*}
    where $(\tilde{\sfa}, \tilde{\omega})=F_{\mathbf{x}}^{\mu}(U)$.
    For $0 \leq t \leq n$, let $W^t = (V_1, \ldots, V_t, U_{t+1}, \ldots, U_{\tau}, V_{\tau+1}, \ldots, V_n)$, where $\tau = \tau_{f, T}(\tilde{\sfa}, \tilde{\omega})$, and let $(\sfa^t, \omega^t) = F^{\mu}_{\bfx}(W^t)$. By Lemma \ref{Lemma: F_x(U)}, $(\tilde{\sfa}, \tilde{\omega})$ has law $\mu$ and is $U-$measurable, while
    $(\sfa^{n}, \omega^{n})$ has law $\mu$ and is independent of $U$.
    Therefore, using that $f$ takes values in $[0, 1]$ we have
    \begin{align*}
        \mathrm{Var}_{\mu}(f) \leq \frac{1}{2} \mu[|f - \mu[f]|]
        = \frac{1}{2} \mathbb{E}\Big[\big| \mathbb{E}[f(\tilde{\omega}) - f(\omega^{n}) \mid U]\big|\Big]
        \leq \frac{1}{2} \mathbb{E}[|f(\tilde{\omega}) - f(\omega^{n})|].
    \end{align*}
    Since $f(\tilde{\omega}) = f(\omega^{0})$ because $\tilde{\omega}$ agrees with $\omega^{0}$ up to time $\tau$, we obtain
    \begin{align*}
        \mathrm{Var}_{\mu}(f) &\leq \frac{1}{2} \mathbb{E}[|f(\omega^{0}) - f(\omega^{n})|]
        \leq \frac{1}{2} \sum_{t=1}^{n} \mathbb{E}[|f(\omega^{t}) - f(\omega^{t-1})|]\\
        &= \frac{1}{2} \sum_{t=1}^{n} \mathbb{E}\left[|f(\omega^{t}) - f(\omega^{t-1})| \mathbbm{1}_{\{t \leq \tau\}}\right],
    \end{align*}
    where we have used that $\omega^{t} = \omega^{t-1}$ for any $t > \tau$. Now since $\bfx_t$ and the event $\{t \leq \tau\}$ are $U_{[t-1]}$-measurable,
    \begin{equation}
    \label{eq: telescoping sum}
    \begin{aligned}
        \mathrm{Var}_{\mu}(f) &\leq \frac{1}{2} \sum_{xy \in \overline{E}(\Lambda)} \sum_{t=1}^{n} \mathbb{E}\left[\mathbb{E}\left[|f(\omega^{t}) - f(\omega^{t-1})| \mid U_{[t-1]}\right] \mathbbm{1}_{\{t \leq \tau, \bfx_t = xy\}}\right]\\
        &+ \frac{1}{2} \sum_{y \in \Lambda} \sum_{t=1}^{n} \mathbb{E}\left[\mathbb{E}\left[|f(\omega^{t}) - f(\omega^{t-1})| \mid U_{[t-1]}\right] \mathbbm{1}_{\{t \leq \tau, \bfx_t = y\}}\right].
    \end{aligned}
    \end{equation}
    We first tackle the terms in \eqref{eq: telescoping sum} where $\bfx_t$ is an edge, aiming to show that on the event $\{t \leq \tau, \bfx_t = xy \in \overline{E}(\Lambda)\}$,
    \begin{equation}
        \label{eq: OSSS edge terms}
        \mathbb{E}\left[|f(\omega^{t}) - f(\omega^{t-1})| \mid U_{[t-1]}\right] \leq 2 \mathrm{Cov}_\mu (f, \omega_{xy}).
    \end{equation}
    The inequality \eqref{eq: OSSS edge terms} follows as in the proof of \cite[Theorem 1.1]{sharpness_annals}, but we repeat the argument here for the reader's convenience.
    First observe that on the event $\{t \leq \tau, \bfx_t = xy \in \overline{E}(\Lambda)\}$, if $\omega^{t}_{xy} = \omega^{t-1}_{xy}$ then $(\sfa^{t}, \omega^{t}) = (\sfa^{t-1}, \omega^{t-1})$. Together with the fact that $f$ is increasing, this implies that 
    \begin{align}
        \label{eq: f(omega^t) - f(omega^t-1)}
        |f(\omega^t) - f(\omega^{t-1})| &= (f(\omega^t) - f(\omega^{t-1}))(\omega^t_{xy} - \omega^{t-1}_{xy})\\
        \nonumber
        &= f(\omega^{t-1}) \omega^{t-1}_{xy} + f(\omega^{t})\omega^{t}_{xy} - f(\omega^{t-1}) \omega^{t}_{xy} - f(\omega^{t}) \omega^{t-1}_{xy}.
    \end{align}
    Also notice that Lemma \ref{Lemma: F_x(U)} implies that for any measurable function $g: (\mathbb{R}^{+})^{\Lambda} \times \{0, 1\}^{\overline{E}(\Lambda)} \rightarrow \mathbb{R}$ and any $k \leq t \leq n$,
    \begin{equation}
        \label{eq: applying F_x(U) lemma}
        \mathbb{E}[ g(\sfa^{t}, \omega^{t}) \mid U_{[k]}] = \mu[g(\sfa, \omega)].
    \end{equation}
    Applying \eqref{eq: applying F_x(U) lemma} to $g(\sfa, \omega) = f(\omega) \omega_{xy}$ gives
    \begin{equation}
        \label{eq: OSSS same t terms}
        \mathbb{E}[f(\omega^{t-1})\omega^{t-1}_{xy} \mid U_{[t-1]}] = \mu[f(\omega) \omega_{xy}] = \mathbb{E}[f(\omega^{t})\omega^{t}_{xy} \mid U_{[t-1]}].
    \end{equation}
    Since $\mu$ is weakly monotonic and $T$ is an admissible decision tree, for fixed $U_{[n]}$ and $s$, $\omega^{s}$ is an increasing function of $V$. Since $f(\omega)$ and $\omega_{xy}$ are increasing functions of $\omega$, we deduce that $f(\omega^{t-1})$ and $\omega^{t}_{xy}$ are increasing functions of $V$. 
    The FKG inequality applied to the i.i.d random variables $V_1, \ldots, V_n$ gives
    \begin{equation*}
        \mathbb{E}[f(\omega^{t-1}) \omega^{t}_{xy} \mid U_{[n]}] \geq \mathbb{E}[f(\omega^{t-1}) \mid U_{[n]}] \mathbb{E}[\omega^{t}_{xy} \mid U_{[n]}].
    \end{equation*}
    Taking the expectation with respect to $\mathbb{E}[\ \cdot \mid U_{[t-1]}]$ gives
    \begin{align}
        \label{eq: OSSS different t terms}
        \mathbb{E}[f(\omega^{t-1}) \omega^{t}_{xy} \mid U_{[t-1]}] &\geq \mathbb{E}\left[ \mathbb{E}[f(\omega^{t-1}) \mid U_{[n]}] \mathbb{E}[\omega^{t}_{xy} \mid U_{[n]}] \ \big| \ U_{[t-1]} \right]\\
        \nonumber
        &= \mathbb{E}[f(\omega^{t-1}) \mid U_{[t-1]}] \mathbb{E}[\omega^{t}_{xy} \mid U_{[t-1]}] = \mu[f(\omega)] \mu[\omega_{xy}],
    \end{align}
    where we have used that $\mathbb{E}[\omega^{t}_{xy} \mid U_{[n]}]$ is $U_{[t-1]}$-measurable and \eqref{eq: applying F_x(U) lemma}.
    Similarly, $f(\omega^{t})$ and $\omega^{t-1}_{xy}$ are increasing functions of $V$, so applying the FKG inequality and taking expectation with respect to $\mathbb{E}[\ \cdot \mid U_{[t]}]$ gives
    \begin{align*}
        \mathbb{E}[f(\omega^{t}) \omega^{t-1}_{xy} \mid U_{[t]}] &\geq
        \mathbb{E} \left[ \mathbb{E}[f(\omega^{t}) \mid U_{[n]}] \mathbb{E}[\omega^{t-1}_{xy} \mid U_{[n]}] \big| U_{[t]} \right]\\
        &= \mathbb{E}[f(\omega^{t}) \mid U_{[t]}] \mathbb{E}[\omega^{t-1}_{xy} \mid U_{[t]}] = \mu[f(\omega)] \mathbb{E}[\omega^{t-1}_{xy} \mid U_{[t]}].
    \end{align*}
    Taking expectation with respect to $\mathbb{E}[\ \cdot \mid U_{[t-1]}]$ and using \eqref{eq: applying F_x(U) lemma}, we get
    \begin{equation*}
        \mathbb{E}[f(\omega^{t}) \omega^{t-1}_{xy} \mid U_{[t-1]}] \geq
        \mu[f(\omega)] \mathbb{E}[\omega^{t-1}_{xy} \mid U_{[t-1]}] = \mu[f(\omega)] \mu[\omega_{xy}].
    \end{equation*}
    Combining the above inequality with \eqref{eq: f(omega^t) - f(omega^t-1)}, \eqref{eq: OSSS same t terms} and \eqref{eq: OSSS different t terms} yields \eqref{eq: OSSS edge terms}.
    
    We now consider the terms in \eqref{eq: telescoping sum} where $\bfx_t$ is a vertex. On the event $\{t \leq \tau, \bfx_t = y \in \Lambda\}$ define 
    \[
    (\sfa^*, \omega^*) = F^{\mu}_{\bfx}(V_1, \ldots, V_{t-1}, 0, U_{t+1}, \ldots, U_{\tau}, V_{\tau+1}, \ldots V_n).
    \]
    Then weak monotonicity (which applies because $T$ is admissible) implies that $\omega^* \leq \omega^t, \omega^{t-1}$, so
    \begin{equation}\label{eq: trick}
    \begin{aligned}
        \mathbb{E}\left[|f(\omega^{t}) - f(\omega^{t-1})| \mid U_{[t-1]}\right]
        &\leq \mathbb{E}\left[|f(\omega^{t}) - f(\omega^*)| + |f(\omega^*) -f(\omega^{t-1})| \mid U_{[t-1]}\right]\\
        &= \mathbb{E}[f(\omega^{t}) + f(\omega^{t-1}) - 2 f(\omega^*) \mid U_{[t-1]}].
    \end{aligned}
    \end{equation}
    
    We claim that on the event $\{t \leq \tau, \bfx_t = y\}$, 
    \[
    \mathbb{E}[f(\omega^*) \mid U_{[t-1]}]\geq \mu[f(\omega) \mid \omega_{yz} = 0 \ \forall z \sim y].
    \]
    Indeed, by weak monotonicity, $(\sfa^*, \omega^*)\geq (\sfa',\omega')$, where 
    \[
    (\sfa', \omega') = F^{\mu'}_{\bfx}(V_1, \ldots, V_{t-1}, 0, U_{t+1}, \ldots, U_{\tau}, V_{\tau+1}, \ldots V_n)
    \]
    for $\mu'=\mu[\cdot \mid \sfa_y=0]$. Since $(\sfa', \omega')$ has law $\mu'$ on the event $\{t \leq \tau, \bfx_t = y\}$ by Lemma \ref{Lemma: F_x(U)}, the claim follows from our assumption that \eqref{eq: zero conditioning} holds. 
     
    Combining the claim with \eqref{eq: trick} and using \eqref{eq: applying F_x(U) lemma}, we get
    \begin{align*}
        \mathbb{E}\left[|f(\omega^{t}) - f(\omega^{t-1})| \mid U_{[t-1]}\right]
        &\leq 2 \mu[f(\omega)] - 2 \mu[f(\omega) \mid \omega_{yz} = 0 \ \forall z \sim y]\\
        &=\frac{2}{\mu[A_{y}^{\mathsf{c}}]} \mathrm{Cov}_\mu (f, \mathbbm{1}_{A_y}),
    \end{align*}
    where $A_y = C_y^{\mathsf{c}}$ is the event that $\omega_{yz} = 1$ for some $z \sim y$. Using that $\left(\sum_{z \sim y} \omega_{yz}\right) - \mathbbm{1}_{A_y}$ is an increasing function, we can apply the FKG inequality to get
    \[
    \sum_{z \sim y}\mathrm{Cov}_{\mu} (f, \omega_{yz})=\mathrm{Cov}_\mu (f, \mathbbm{1}_{A_y})+\mathrm{Cov}_\mu \left(f, \left(\sum_{z \sim y} \omega_{yz}\right) - \mathbbm{1}_{A_y}\right)\geq \mathrm{Cov}_\mu (f, \mathbbm{1}_{A_y}).
    \]
    Hence
    \begin{equation}
        \label{eq: OSSS vertex terms}
        \mathbb{E}\left[|f(\omega^{t}) - f(\omega^{t-1})| \mid U_{[t-1]}\right] \leq \frac{2}{\mu[A_{y}^{\mathsf{c}}]} \sum_{z \sim y} \mathrm{Cov}_{\mu} (f, \omega_{yz}).
    \end{equation}
    Combining \eqref{eq: telescoping sum}, \eqref{eq: OSSS edge terms} and \eqref{eq: OSSS vertex terms} and recalling that $\sum_{t = 1}^{n} \mathbb{P}[t \leq \tau, \bfx_t = X] = \mu[R_{f,T}(X)]$, we obtain
    \begin{align*}
        \mathrm{Var}_\mu (f) &\leq \sum_{xy \in \overline{E}(\Lambda)} \mu[R_{f,T}(xy)] \mathrm{Cov}_\mu (f, \omega_{xy}) 
        + \sum_{y \in \Lambda} \frac{\mu[R_{f,T}(y)]}{\mu[A_{y}^{\mathsf{c}}]} \sum_{z \sim y} \mathrm{Cov}_{\mu} (f, \omega_{yz})\\
        &\leq \sum_{xy \in \overline{E}(\Lambda)} \delta_{xy}(f, T) \mathrm{Cov}_{\mu} (f, \omega_{xy}). 
    \end{align*}
\end{proof}

\section{Subcritical sharpness for the random cluster model}\label{sec:sharpness random cluster}
In this section, we prove subcritical sharpness for the random cluster representation, which will be used in the next section to prove Theorem~\ref{thm: main theorem}.

Recall that $\beta_c = \inf\{\beta \geq 0 : \nu^{+}_{\beta}[\varphi_o] > 0\}$ is the critical point for the spin model and that $\beta_c>0$ by Proposition~\ref{prop:non-trivial}. Also note that by Remark \ref{remark: infinite coupling}, $\beta_c$ coincides with the critical point for the emergence of an infinite cluster under $\Psi^1_{\beta}$. 
We will obtain a subcritical sharpness result for the random cluster model, in the sense that for any $\beta < \beta_c$ and any boundary field $\sfb$ growing at most exponentially, the probability of the event that the origin is connected to $\partial \Lambda_n$ decays exponentially in $n$. Along the way, we also show the so-called mean-field lower bound for the percolation density in the supercritical regime.

\begin{theorem}
    \label{Thm: sharpness RC}
    Assume that $\sfb \in (\mathbb{R}^{+})^{V}$ is such that there exists $\lambda > 0$ with $\sfb_x \leq e^{\lambda d_G(o,x)}$ for all $x \in V$. For every $\beta < \beta_c$ there exists $c = c(\beta, \lambda) > 0$ such that for every $n \geq 0$  and any partition $\xi$ of $\partial^{\mathrm{ext}}\Lambda_n$,
    \begin{equation*}
        \Psi_{\Lambda_n, \beta}^{(\xi, \sfb)}[o \longleftrightarrow \partial \Lambda_n] \leq e^{-cn}.
    \end{equation*}
Moreover, there exists $c'>0$ such that for every $\beta\geq\beta_c$ close enough to $\beta_c$ we have 
\[
\Psi^1_{\beta}[o\longleftrightarrow \infty]\geq c'(\beta-\beta_c).
\]
\end{theorem}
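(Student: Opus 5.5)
We follow the scheme of Duminil-Copin, Raoufi and Tassion. Fix $\beta_0<\beta_c$, a boundary field $\sfb$ with $\sfb_x\le e^{\lambda d_G(o,x)}$, and work with the wired measure $\mu_{n,\beta}:=\Psi^{(w,\sfb)}_{\Lambda_n,\beta}$. By Proposition~\ref{prop: bc monotonicity for RC} this dominates every $\Psi^{(\xi,\sfb)}_{\Lambda_n,\beta}$, so it suffices to bound the wired measure. Set $\theta_n(\beta)=\mu_{n,\beta}[o\leftrightarrow\partial\Lambda_n]$ and $S_n=\sum_{k<n}\theta_k$.

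\textbf{Step 1 (OSSS bound).} For each $k\in\{1,\dots,n\}$ we apply Theorem~\ref{Thm: OSSS}, which is applicable by Proposition~\ref{prop: weak monotonicity} and Lemma~\ref{Lemma: zero conditioning}. We take $f=\mathbbm 1_{o\leftrightarrow\partial\Lambda_n}$ and the admissible decision tree $T_k$ that explores the clusters of $\partial\Lambda_k$ from the outside inward. Whenever it is about to query an edge, it first reveals any unrevealed endpoint (vertex $\sfa$-value). With this tree, an edge $xy$ and its endpoints are revealed only if $x$ or $y$ is connected to $\partial\Lambda_k$. Hence
\[
\delta_{xy}(f,T_k)\le C\max_{z\in\{x,y\}}\frac{\mu[z\leftrightarrow\partial\Lambda_k]}{\mu[C_z]}.
\]
We need $\mu[C_z]\ge c_0>0$ uniformly. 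For interior $z$ this follows from Proposition~\ref{prop: regularity}: with probability bounded below, $\sfa$ is small around $z$, and then all edges at $z$ are closed with probability bounded below. Near $\partial\Lambda_n$ the factor $A$ can be large, so there we either restrict $T_k$ to the region where $A=1$, or absorb those vertices into the boundary term. I expect handling this boundary region, where $\sfb$ grows exponentially, to be one of two delicate points.

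Averaging over $k$ and using $\mu[z\leftrightarrow\partial\Lambda_k]\le\theta_{|k-d(o,z)|}$ (via transitivity and monotonicity in boundary conditions, comparing with wired balls around $z$) gives
\[
\theta_n(1-\theta_n)\le \frac{C S_n}{n}\sum_{xy}\mathrm{Cov}_\mu(f,\omega_{xy}).
\]

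\textbf{Step 2 (derivative comparison).} This is the main obstacle. We must show that $\frac{d}{d\beta}\theta_n\ge c\sum_{xy}\mathrm{Cov}_\mu(f,\omega_{xy})$, uniformly for $\beta$ in a compact subinterval. This is where the correlation inequality of Lemma~\ref{lem:cov comparison new} is invoked. Differentiating the density, the $\beta$-derivative of $\mu[f]$ equals the covariance of $f$ with
\[
\sum_{xy}\sfa_x\sfa_y\Bigl(\frac{2\omega_{xy}}{p_{xy}(\sfa)}-1\Bigr)e^{-2\beta\sfa_x\sfa_y}\ (\text{up to constants}).
\]
The plan is as follows. First rewrite this via the Ising representation as the covariance of $f$ with $\sum_{xy}\sfa_x\sfa_y\sigma_x\sigma_y$. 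Then compare it edge by edge with $\mathrm{Cov}(f,\omega_{xy})$, using the FKG inequalities of Propositions~\ref{prop: FKG} and \ref{prop: open edge FKG}. The decisive input is that conditioning on $\omega_{xy}=1$ increases $\sfa$ stochastically. That lemma should also supply uniformity: a lower bound $c>0$ after restricting to $\sfa_x\sfa_y$ in a compact window, with the remaining contributions shown to be nonnegative.

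\textbf{Step 3 (conclusion).} Steps 1 and 2 combine to give
\[
\theta_n'\ge c\,\frac{n}{S_n}\,\theta_n(1-\theta_n).
\]
We then apply the standard lemma of Duminil-Copin, Raoufi and Tassion to $\theta_n$ on $[\beta_0,\beta_1]$, noting that $\theta_n\le 1-c$ uniformly since $\mu[C_o]\ge c_0$. It yields $\beta_1'$ such that $\theta_n(\beta)\le e^{-cn}$ for $\beta<\beta_1'$, and $\lim_n\theta_n(\beta)\ge c'(\beta-\beta_1')$ for $\beta>\beta_1'$.

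It remains to identify $\beta_1'=\beta_c$, with the limit identified through Proposition~\ref{prop: plus convergence} (choosing $\sfb\ge\eta^+$ for the lower bound). Since $\lim\theta_n\ge\Psi^1_\beta[o\leftrightarrow\infty]$ and $\lim\theta_n=\Psi^1_\beta[o\leftrightarrow\infty]$ by weak convergence, exponential decay forces $\beta_1'\le\beta_c$. Conversely, the mean-field lower bound forces $\beta_1'\ge\beta_c$. Note that the lower bound on $\lim\theta_n$ holds for all $\sfb$ in the allowed class, so it transfers to $\Psi^1_\beta$.
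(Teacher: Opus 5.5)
Your architecture matches the paper's: exploration trees $T(k)$ averaged over $k$, Proposition~\ref{prop: beta derivative} with Lemma~\ref{lem:cov comparison new}, the Duminil-Copin--Raoufi--Tassion lemma, and identification of $\beta_c$ through $\Psi^1_\beta$. However, the two places where unbounded spins genuinely matter are left open, and as written they fail.

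\textbf{The boundary region.} You define $\theta_n$ on $\Lambda_n$ itself. Then any admissible tree determining $\mathbbm{1}_{\{o\leftrightarrow\partial\Lambda_n\}}$ must reveal vertices of $\partial\Lambda_n$ before querying the edges entering them; indeed $T(n)$ reveals all of $\partial\Lambda_n$ at the start. So you cannot restrict $T_k$ to the region where $A=1$ without changing the event. For those vertices, $\mu[C_z]$ is not bounded below. Take the Ising measure ($\sfa\equiv 1$) with $\sfb_y=e^{\lambda(n+1)}$ on $\partial^{\mathrm{ext}}\Lambda_n$. Since $\omega_{zy}$ is open with conditional probability at least $p_{zy}/(2-p_{zy})$, you get $\mu[C_z]\le 2e^{-2\beta e^{\lambda(n+1)}}$, and the vertex terms $\mu[R(z)]/\mu[C_z]$ in $\delta_{xy}$ destroy the bound. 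The paper changes the observable instead. It sets $\theta_n(\beta)=\Psi^{(w,\mathsf{p})}_{\Lambda_{2n},\beta}[o\leftrightarrow\partial\Lambda_n]$, so the trees only reveal vertices of $\Lambda_n$, which lie at distance $n$ from the boundary field. There Proposition~\ref{prop: regularity} bounds $\min_{x\in\Lambda_n}\mu[C_x]$ and $1-\theta_n$ from below uniformly. The decay is transferred back at the end via $\Psi^{(w,\mathsf{p})}_{\Lambda_{2n},\beta}[o\leftrightarrow\partial\Lambda_{2n}]\le\theta_n$.

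\textbf{The comparison behind $Q_n$.} Your step $\mu[z\leftrightarrow\partial\Lambda_k]\le\theta_{|k-d_G(o,z)|}$ ``by transitivity and monotonicity in boundary conditions'' is the main gap. After the domain Markov property on a ball around $z$, the boundary field is the random, unbounded $\sfa$ on its exterior boundary. No deterministic field dominates it, so Proposition~\ref{prop: bc monotonicity for RC} gives nothing. Moreover, a general $\sfb$ is not mapped to itself by automorphisms, so transitivity cannot identify the result with $\theta$. The paper proceeds as follows.
\begin{enumerate}[(i)]
\item It first reduces to $\sfb=\mathsf{p}$ with $\mathsf{p}_y=e^{Md_G(o,y)}$, whose translate is $\tau_x\mathsf{p}$.
\item It uses that $k\mapsto\mu[x\leftrightarrow\partial\Lambda_k(x)]$ is decreasing to restrict to $k\le n/4$, so that $\Lambda_{2k}(x)\subset\Lambda_{3n/2}$ stays in the regular region.
\item It bounds $\mu[x\leftrightarrow\partial\Lambda_k(x)]\le\theta_k+\mu[\mathcal{E}_k(x)]$, where $\mathcal{E}_k(x)$ is the event that $\sfa_y>(\tau_x\mathsf{p})_y$ for some $y\in\partial^{\mathrm{ext}}\Lambda_{2k}(x)$.
\item It controls $\mu[\mathcal{E}_k(x)]\le e^{-ck}$ by regularity.
\item It absorbs this error into $\theta_k$ via the a priori bound $\theta_k\ge e^{-c'k}$, obtained by opening a geodesic; $c'$ only improves as $M$ grows, so $M$ can be chosen so large that $c>c'$.
\end{enumerate}
This yields $Q_n\le 8\sum_{k<n}\theta_k$. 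Without such an argument the differential inequality $\theta_n'\ge t\,n\,\theta_n/S_n$ is not established.

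Finally, your sketch for Step 2 is off, though harmlessly. The $\beta$-derivative of the weight is $2\sfa_x\sfa_y\omega_{xy}/p_{xy}-\sfa_x\sfa_y$, with no factor $e^{-2\beta\sfa_x\sfa_y}$. Rewriting it as a covariance with $\sum\sfa_x\sfa_y\sigma_x\sigma_y$ holds only for spin observables, not for $f$ depending on $\omega$. You do not need this: Proposition~\ref{prop: beta derivative} together with Lemma~\ref{lem:cov comparison new} gives $\theta_n'\ge\varepsilon\sum_e\mathrm{Cov}(f,\omega_e)$ directly. The constant $\varepsilon(\beta)$ there is continuous, hence bounded below on $[\beta_0,2\beta_0]$, and the paper uses nothing more.
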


To prove Theorem \ref{Thm: sharpness RC}, we will apply the OSSS inequality to a decision tree determining the event $o \longleftrightarrow \partial \Lambda_n$. We will see that this can be viewed as a differential inequality for the probability that $o \longleftrightarrow \partial \Lambda_n$, which allows us to apply \cite[Lemma 3.1]{sharpness_annals} to deduce sharpness.
We begin by calculating the derivative with respect to $\beta$. Below, if $\sfa_x \sfa_y=0$, we set $\frac{2\mathsf{a}_x \mathsf{a}_y \omega_{xy}}{p_{xy}(\mathsf{a})}=0$, since $\omega_{xy}=0$ in this case. We also write $\mathrm{Cov}_{\Lambda, \beta}^{(\xi, \sfb)}$ for the covariance with respect to the measure $\Psi_{\Lambda, \beta}^{(\xi, \sfb)}.$
\begin{proposition}
    \label{prop: beta derivative}
    For any function $f: (\mathbb{R}^{+})^{\overline{\Lambda}} \times \{0,1\}^{\overline{E}(\Lambda)}    \rightarrow \mathbb{R}$, we have
    \begin{equation*}
        \frac{\mathrm{d} \Psi_{\Lambda, \beta}^{(\xi, \sfb)}[f(\mathsf{a}, \omega)]}{\mathrm{d} \beta}
        = \sum_{xy \in \overline{E}(\Lambda)}  \mathrm{Cov}_{\Lambda, \beta}^{(\xi, \sfb)} \left(f, \frac{2\mathsf{a}_x \mathsf{a}_y \omega_{xy}}{p_{xy}(\mathsf{a})} - \sfa_x \sfa_y \right).
    \end{equation*}
\end{proposition}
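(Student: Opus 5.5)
The plan is to differentiate the unnormalised density directly. By Definition~\ref{Def: random cluster}, the $\beta$-dependence of $\mathrm{d}\Psi_{\Lambda,\beta}^{(\xi,\sfb)}$ sits only in the edge weights
\[
w_{xy}(\sfa,\omega)=\sqrt{1-p_{xy}(\sfa)}\left(\frac{p_{xy}(\sfa)}{1-p_{xy}(\sfa)}\right)^{\omega_{xy}},
\]
and the factor $2^{k^\xi(\omega)}$, the indicator $\mathbbm{1}_{\{\sfa|_{\partial^{\mathrm{ext}}\Lambda}=\sfb\}}$ and the reference measure $\prod_x\mathrm{d}\rho(\sfa_x)$ do not depend on $\beta$. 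Writing $\Psi[f]=N(\beta)/Z(\beta)$, with $N=\int f\,W$ and $Z=\int W$, where $W=2^{k^\xi}\prod_{xy}w_{xy}$, the quotient rule gives
\[
\frac{\mathrm{d}}{\mathrm{d}\beta}\Psi[f]=\Psi[f\,\partial_\beta\log W]-\Psi[f]\,\Psi[\partial_\beta\log W]=\mathrm{Cov}(f,\partial_\beta\log W),
\]
provided differentiation under the integral is justified.

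The core step is the computation of $\partial_\beta\log w_{xy}$. Since $1-p_{xy}=e^{-2\beta\sfa_x\sfa_y}$, we have
\[
w_{xy}=e^{-\beta\sfa_x\sfa_y}\,\bigl(e^{2\beta\sfa_x\sfa_y}-1\bigr)^{\omega_{xy}}.
\]
If $\omega_{xy}=0$, then $\log w_{xy}=-\beta\sfa_x\sfa_y$, with derivative $-\sfa_x\sfa_y$. If $\omega_{xy}=1$, the derivative is
\[
-\sfa_x\sfa_y+\frac{2\sfa_x\sfa_ye^{2\beta\sfa_x\sfa_y}}{e^{2\beta\sfa_x\sfa_y}-1}=-\sfa_x\sfa_y+\frac{2\sfa_x\sfa_y}{p_{xy}(\sfa)}.
\]
In both cases this equals $\frac{2\sfa_x\sfa_y\omega_{xy}}{p_{xy}(\sfa)}-\sfa_x\sfa_y$. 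When $\sfa_x\sfa_y=0$, the weight $w_{xy}$ vanishes on $\omega_{xy}=1$, so such configurations carry no mass; this is consistent with the convention fixed before the statement. Summing over $xy\in\overline{E}(\Lambda)$ then gives the claimed formula.

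The main obstacle is justifying the interchange of $\frac{\mathrm{d}}{\mathrm{d}\beta}$ with the integral over $\sfa$, since $\sfa$ is unbounded. I will take $f$ bounded, which is the case needed later, or else square-integrable with the necessary moments. I will then dominate $|\partial_\beta W|$ locally uniformly in $\beta\in[\beta_0-\epsilon,\beta_0+\epsilon]$.

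The bound rests on two facts. First, $w_{xy}\le e^{\beta\sfa_x\sfa_y}\le e^{\beta(\sfa_x^2+\sfa_y^2)/2}$. Second, $\frac{2\sfa_x\sfa_y}{p_{xy}}\le C(1+\sfa_x\sfa_y)$ for $\beta$ bounded away from $0$, using $u/(1-e^{-u})\le 1+u$. Together these give
\[
|\partial_\beta W|\le C\prod_{x\in\Lambda}(1+\sfa_x^2)\,e^{D(\beta_0+\epsilon)\sfa_x^2},
\]
times constants depending on the fixed boundary field $\sfb$. This is $\rho$-integrable by the super-Gaussian assumption (ii).

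Near $\beta=0$ I will instead use $\partial_\beta w_{xy}=\bigl(2\sfa_x\sfa_y e^{\beta\sfa_x\sfa_y}\omega_{xy}-\sfa_x\sfa_y w_{xy}\bigr)$, which avoids dividing by $p_{xy}$ and is dominated in the same way. Dominated convergence then justifies differentiating both $N$ and $Z$, and the proof is complete.
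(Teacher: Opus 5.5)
Your proposal is correct and follows the paper's proof. You differentiate each edge weight $e^{-\beta\sfa_x\sfa_y}(e^{2\beta\sfa_x\sfa_y}-1)^{\omega_{xy}}$ to get the factor $\frac{2\sfa_x\sfa_y\omega_{xy}}{p_{xy}}-\sfa_x\sfa_y$, check the degenerate case $\sfa_x\sfa_y=0$ separately, and conclude by the product/quotient rule. The one addition is your domination argument, which uses the super-Gaussian tails of $\rho$ to justify differentiating under the integral for bounded $f$; the paper leaves this step implicit.
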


\begin{proof}
    Set $r_{xy}(\sfa) = \sqrt{1 - p_{xy}(\sfa)} = e^{-\beta \sfa_x \sfa_y}$.
    For any edge $xy \in \overline{E}(\Lambda)$ with $\sfa_x \sfa_y \neq 0$, we have
    \begin{align*}
        \frac{\mathrm{d} }{\mathrm{d} \beta} \left( r_{xy}(\mathsf{a}) \left( \frac{p_{xy}(\mathsf{a})}{1 -p_{xy}(\mathsf{a})} \right)^{\omega_{xy}} \right) &=
        r_{xy} \left( \frac{p_{xy}}{1 -p_{xy}} \right)^{\omega_{xy}} \left( \frac{\omega_{xy}}{p_{xy}(1 - p_{xy})} \frac{\mathrm{d} p_{xy}}{\mathrm{d} \beta}+\frac{1}{r_{xy}} \frac{\mathrm{d}r_{xy}}{\mathrm{d}\beta} \right)
        \\
        &= r_{xy} \left( \frac{p_{xy}}{1 -p_{xy}} \right)^{\omega_{xy}} \left( \frac{2 \mathsf{a}_x \mathsf{a}_y \omega_{xy}}{p_{xy}}-\sfa_x\sfa_y\right).
    \end{align*}
    When $\sfa_x \sfa_y = 0$, it is still true that
        \begin{align*}
        \frac{\mathrm{d} }{\mathrm{d} \beta} \left( r_{xy}(\mathsf{a}) \left( \frac{p_{xy}(\mathsf{a})}{1 -p_{xy}(\mathsf{a})} \right)^{\omega_{xy}} \right)
        = r_{xy} \left( \frac{p_{xy}}{1 -p_{xy}} \right)^{\omega_{xy}} \left( \frac{2 \mathsf{a}_x \mathsf{a}_y \omega_{xy}}{p_{xy}}-\sfa_x\sfa_y\right)
    \end{align*}
    because both sides are $0$.
    The desired result follows from the product rule.
\end{proof}

We now relate the derivative to a sum of covariances with respect to the states of edges, which allows us to interpret the OSSS inequality as a differential inequality for $\Psi_{\Lambda_n, \beta}^{(\xi, \sfb)}[o \longleftrightarrow \partial \Lambda_n]$.
The first step is to lower bound the covariances appearing in Proposition~\ref{prop: beta derivative} by covariances that involve $\omega_{xy}$ and a function $g_{xy}$ defined by
\[
g_{xy}(\sfa):=(1- e^{-2 \beta \sfa_x \sfa_y})\mathbbm{1}_{\{2\beta\sfa_x \sfa_y < 1\}}+(1- e^{-1})\mathbbm{1}_{\{2\beta\sfa_x \sfa_y \geq 1\}}.
\]
This is done in Lemma \ref{lem: cov comparison} below. We then show in Lemma \ref{lem:cov comparison new} how this can be used to obtain a lower bound in terms of $\mathrm{Cov}_{\Lambda, \beta}^{(\xi, \sfb)}(f, \omega_{xy})$. 

\begin{lemma}\label{lem: cov comparison}
Let $\beta>0$ and let $\Lambda\subset V$ be finite with boundary conditions $(\xi, \sfb)$ on $\Lambda$. For every increasing function $f: (\mathbb{R}^{+})^{\overline{\Lambda}} \times \{0,1\}^{\overline{E}(\Lambda)}    \rightarrow \mathbb{R}$ and every $xy\in \overline{E}(\Lambda)$, we have
\begin{equation}
    \label{eq: cov comparison}
    \mathrm{Cov}_{\Lambda, \beta}^{(\xi, \sfb)} \left(f, \frac{2\sfa_x \sfa_y \omega_{xy}}{p_{xy}} - \sfa_x \sfa_y \right) \geq \frac{1}{2 \beta} \mathrm{Cov}_{\Lambda, \beta}^{(\xi, \sfb)}(f, g_{xy} \, \omega_{xy}).
\end{equation} 
\end{lemma}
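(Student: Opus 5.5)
The plan is to condition on the absolute value field $\sfa$ and split both covariances using the law of total covariance. By Proposition~\ref{prop: coupling}, given $\sfa$ the edge configuration has law $\phi^{\xi}_{\Lambda,\beta,\sfa}$. Set $u=u_{xy}(\sfa)=2\beta\sfa_x\sfa_y$, so that $p_{xy}=1-e^{-u}$ and $g_{xy}\le \min(p_{xy},1)$. Write $\Psi=\Psi_{\Lambda,\beta}^{(\xi,\sfb)}$, let $\mathrm{Cov}_{\sfa}$ and $\mathbb{E}_{\sfa}$ denote covariance and expectation under $\phi^{\xi}_{\Lambda,\beta,\sfa}$, and put $F(\sfa)=\mathbb{E}_{\sfa}[f(\sfa,\omega)]$. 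Then for $h\in\{\frac{2\sfa_x\sfa_y\omega_{xy}}{p_{xy}}-\sfa_x\sfa_y,\ g_{xy}\omega_{xy}\}$,
\[
\mathrm{Cov}_{\Lambda,\beta}^{(\xi,\sfb)}(f,h)=\Psi\big[\mathrm{Cov}_{\sfa}(f,h)\big]+\mathrm{Cov}_{\chi}\big(F,\mathbb{E}_{\sfa}[h]\big),
\]
where $\chi$ is the $\sfa$-marginal. I will compare the two pieces separately.

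\emph{Quenched term.} For fixed $\sfa$ the function $\frac{2\sfa_x\sfa_y\omega_{xy}}{p_{xy}}-\sfa_x\sfa_y$ is affine in $\omega_{xy}$, so
\[
\mathrm{Cov}_{\sfa}\Big(f,\tfrac{2\sfa_x\sfa_y\omega_{xy}}{p_{xy}}-\sfa_x\sfa_y\Big)=\frac{u}{\beta p_{xy}}\,\mathrm{Cov}_{\sfa}(f,\omega_{xy}).
\]
The right-hand side is nonnegative by the FKG inequality for the standard random cluster model. Since $u/p_{xy}\ge 1\ge g_{xy}/2$, this is at least $\frac{1}{2\beta}g_{xy}\mathrm{Cov}_{\sfa}(f,\omega_{xy})=\frac{1}{2\beta}\mathrm{Cov}_{\sfa}(f,g_{xy}\omega_{xy})$. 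Averaging over $\sfa$ handles the first piece.

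\emph{Annealed term.} By the Edwards--Sokal coupling, $\mathbb{E}_{\sfa}[\omega_{xy}]=p_{xy}\frac{1+\langle\sigma_x\sigma_y\rangle_{\sfa}}{2}$, where $\langle\cdot\rangle_{\sfa}=\langle\cdot\rangle^{\mathrm{Ising},\xi}_{\Lambda,\beta,\sfa}$. Hence
\[
\mathbb{E}_{\sfa}\Big[\tfrac{2\sfa_x\sfa_y\omega_{xy}}{p_{xy}}-\sfa_x\sfa_y\Big]=\sfa_x\sfa_y\langle\sigma_x\sigma_y\rangle_{\sfa},\qquad \mathbb{E}_{\sfa}[g_{xy}\omega_{xy}]=g_{xy}p_{xy}\tfrac{1+\langle\sigma_x\sigma_y\rangle_{\sfa}}{2}.
\]
The function $F$ is increasing in $\sfa$ by monotonicity of the standard random cluster model in its coupling constants and boundary conditions. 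By Proposition~\ref{prop: FKG} with empty conditioning, $\chi$ satisfies FKG. It therefore suffices to show that
\[
D(\sfa):=u\langle\sigma_x\sigma_y\rangle_{\sfa}-g_{xy}p_{xy}\tfrac{1+\langle\sigma_x\sigma_y\rangle_{\sfa}}{2}
\]
is increasing in $\sfa$, because then $\mathrm{Cov}_\chi(F,D)\ge0$, which is exactly the required comparison up to the factor $2\beta$.

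\emph{Main obstacle: monotonicity of $D$.} Write $D=\langle\sigma_x\sigma_y\rangle_{\sfa}(u-\tfrac{g_{xy}p_{xy}}{2})-\tfrac{g_{xy}p_{xy}}{2}$. The coefficient $u-g_{xy}p_{xy}/2$ is positive and increasing in $u$, and $\langle\sigma_x\sigma_y\rangle_{\sfa}\ge0$ is increasing in $\sfa$ by Griffiths' inequality. The problem is the last term $-g_{xy}p_{xy}/2$, which decreases as $u$ grows.

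\begin{itemize}
\item For $u\ge1$ the difficulty disappears: the truncation makes $g_{xy}$ constant, so only $p_{xy}$ varies, and the estimate should follow from a lower bound on the product of $\langle\sigma_x\sigma_y\rangle_{\sfa}$ with the increase of $u$.
\item For $u<1$ I would use the single-edge lower bound $\langle\sigma_x\sigma_y\rangle_{\sfa}\ge\tanh(u/2)$. This follows from $\Psi[\omega_{xy}=1\mid\sfa,\omega|_{E\setminus\{xy\}}]\ge p/(2-p)$, as used in Proposition~\ref{prop:non-trivial}. The idea is to show that the derivative of $\langle\sigma_x\sigma_y\rangle_{\sfa}(u-p^2/2)$ dominates the derivative of $p^2/2$. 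Near $u=0$ both are of order $u$, since $u\tanh(u/2)\approx u^2/2\approx p^2/2$.
\end{itemize}

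I would verify the inequality through a direct derivative computation in each coordinate $\sfa_z$, using that every $\sfa_z$-derivative of $\langle\sigma_x\sigma_y\rangle_{\sfa}$ is nonnegative. This is precisely where the specific truncation in $g_{xy}$ and the factor $\frac{1}{2\beta}$ should be tuned.

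If the pointwise monotonicity of $D$ fails, the fallback is to peel off $\Psi[\omega_{xy}\mid\sfa]\cdot(\text{const})$ differently. Concretely, I would move part of the quenched surplus $\frac{u}{\beta p_{xy}}-\frac{g_{xy}}{2\beta}\ge\frac1{2\beta}$ into the annealed term before applying FKG.
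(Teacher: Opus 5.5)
Your total-covariance split is sound. The quenched comparison is correct, using $u/p_{xy}\ge 1$ and FKG for $\phi^{\xi}_{\Lambda,\beta,\sfa}$. The annealed part does reduce, via FKG for $\chi$ and monotonicity of $F$, to showing that $D=u\,t-\frac{g_{xy}p_{xy}}{2}(1+t)$, with $t=\langle\sigma_x\sigma_y\rangle_{\sfa}$, is increasing in $\sfa$. That monotonicity is true, but it is the whole content of the lemma, and you do not prove it.

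Moreover, the tools you name do not suffice for $u<1$ (your $u\ge 1$ case does go through this way). At fixed $t$ one has $\partial_u D=t-p_{xy}e^{-u}(1+t)$. At $t=\tanh(u/2)$ this is about $-u/2$ for small $u$, and about $-0.05$ at $u=1/2$. So any lower bound on $\partial_{\sfa_x}D$ that uses only $\partial_{\sfa_x}t\ge 0$ and $t\ge\tanh(u/2)$ comes out negative. The derivative route can be rescued with the quantitative input $\partial_{\sfa_x}t\ge\beta\sfa_y(1-t^2)$, which comes from the direct coupling of $xy$ together with GKS for the other couplings. One then checks a concave quadratic in $t$ at $t=\tanh(u/2)$ and at $t=1$. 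Even so, the first-order terms cancel exactly as $u\to 0$, so that route is tight.

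The missing idea, which is exactly the paper's key claim, is to express $t$ through the off-edge connection probability $q(\sfa)=\phi^{\xi}_{\Lambda,\beta,\sfa}[x\leftrightarrow y \text{ without using } xy]$, which is increasing in $\sfa$. From $\phi^{\xi}_{\Lambda,\beta,\sfa}[\omega_{xy}]=p_{xy}q+\frac{p_{xy}}{2-p_{xy}}(1-q)$ you get $t=\frac{p_{xy}+2(1-p_{xy})q}{2-p_{xy}}$, and then
\[
D=u\Big(t-\tfrac{g_{xy}}{2}(1+t)\Big)+(u-p_{xy})\,\tfrac{g_{xy}}{2}\,(1+t).
\]
The second term is a product of nonnegative increasing functions. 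For $u<1$ the first term equals $ue^{-u}q$. This is increasing because $ue^{-u}$ increases on $[0,1]$, which is why $g_{xy}$ is truncated at $2\beta\sfa_x\sfa_y=1$. For $u\ge 1$ the first term equals $u\big((1-\frac{g_0}{2})t-\frac{g_0}{2}\big)$ with $g_0=1-e^{-1}$. This is nonnegative and increasing, since $t\ge\tanh(1/2)=\frac{g_0}{2-g_0}$.

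The first term is precisely $2\beta\sfa_x\sfa_y\big(\frac{2-g_{xy}}{p_{xy}}\Psi[\omega_{xy}\mid\sfa]-1\big)$, the function the paper shows is nonnegative and increasing. The paper uses it, via conditional FKG followed by FKG for $\chi$, to get $\mathrm{Cov}\big(f,\frac{(2-g_{xy})\sfa_x\sfa_y\omega_{xy}}{p_{xy}}-\sfa_x\sfa_y\big)\ge 0$. It then disposes of $g_{xy}\big(\frac{\sfa_x\sfa_y}{p_{xy}}-\frac{1}{2\beta}\big)\omega_{xy}$ by plain FKG. With this monotonicity supplied, your split becomes an equivalent reorganisation of the paper's argument; without it, the proposal does not prove the lemma.
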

\begin{proof}
Below we write $\mu=\Psi_{\Lambda, \beta}^{(\xi, \sfb)}$ to simplify the notation. It suffices to show that 
\begin{equation}
    \label{eq: cov >= 0}
    \mathrm{Cov}_{\mu} \left(f, \frac{h_{xy}\sfa_x \sfa_y \omega_{xy}}{p_{xy}} - \sfa_x \sfa_y \right) \geq 0,
\end{equation}    
where $h_{xy}=2-g_{xy}$.
Indeed, from \eqref{eq: cov >= 0}, we have
\begin{align*}
    \mathrm{Cov}_{\mu} \left(f, \frac{2\sfa_x \sfa_y \omega_{xy}}{p_{xy}} - \sfa_x \sfa_y \right) &\geq \mathrm{Cov}_{\mu} \left(f, g_{xy}\frac{\sfa_x \sfa_y \omega_{xy}}{p_{xy}}\right)\\
    &= \mathrm{Cov}_{\mu}\left( f, \frac{g_{xy}}{2 \beta} \omega_{xy} \right) + \mathrm{Cov}_{\mu}\left( f, g_{xy} \left( \frac{\sfa_x \sfa_y}{p_{xy}} - \frac{1}{2 \beta}\right) \omega_{xy} \right).
\end{align*}
Since $g_{xy}$ and $\frac{\sfa_x \sfa_y}{p_{xy}(\sfa)}$ are increasing functions of $\sfa$ and $\frac{\sfa_x \sfa_y}{p_{xy}(\sfa)}\geq \frac{1}{2\beta}$, the last covariance above is positive, hence \eqref{eq: cov comparison} holds.

We now turn to the proof of \eqref{eq: cov >= 0}. We have
    \begin{align*}
        \mathrm{Cov}_{\mu} \left(f, \frac{h_{xy} \sfa_x \sfa_y \omega_{xy}}{p_{xy}} - \sfa_x \sfa_y \right)
        &= \mu \left[ \sfa_x \sfa_y \left( \frac{h_{xy}\, \omega_{xy}}{p_{xy}} -1 \right)(f(\sfa,\omega) - \mu[f])  \right]\\
        &= \mu \left[ \sfa_x \sfa_y \mu\left[ \left( \frac{h_{xy}\, \omega_{xy}}{p_{xy}} -1 \right) (f(\sfa,\omega) - \mu[f]) \ \bigg| \ \sfa   \right] \right]\\
        &\geq \mu \left[ \sfa_x \sfa_y \left( \frac{h_{xy}}{p_{xy}} \mu[\omega_{xy} \mid \sfa] -1 \right) \mu \left[ f(\sfa,\omega) - \mu[f] \mid \sfa   \right] \right],
    \end{align*}
    where in the last inequality we used the FKG inequality for $\mu[\ \cdot \mid \sfa]$, which follows from Proposition \ref{prop: FKG}.
    
    We claim that $\sfa_x \sfa_y \left( \frac{h_{xy}}{p_{xy}} \mu[\omega_{xy} \mid \sfa] -1 \right)$ is non-negative and is an increasing function of $\sfa$.
    Using the claim, \eqref{eq: cov >= 0} follows by applying the FKG inequality again.
    To prove the claim, first note that 
    \begin{equation}
    \label{eq: finite-energy}
    \mu[\omega_{xy} \mid \sfa] = p_{xy} \mu[x \xleftrightarrow{\omega_{\{xy\}}} y \mid \sfa] + \frac{p_{xy}}{2 - p_{xy}} \mu[x \centernot{\xleftrightarrow{\omega_{\{xy\}}}} y \mid \sfa]\geq \frac{p_{xy}}{2-p_{xy}}\geq  \frac{p_{xy}}{h_{xy}},
    \end{equation}
    where $\omega_{\{xy\}}$ is the configuration obtained from $\omega$ by setting edge $xy$ to be closed. This implies that $\sfa_x \sfa_y \left( \frac{h_{xy}}{p_{xy}} \mu[\omega_{xy} \mid \sfa] -1 \right)$ is non-negative.
    Continuing from \eqref{eq: finite-energy}, we have
    \begin{align*}
        \frac{1}{p_{xy}} \mu[\omega_{xy} \mid \sfa] = 1 - \left(1 - \frac{1}{2 - p_{xy}} \right) \mu[x \centernot{\xleftrightarrow{\omega_{\{xy\}}}} y \mid \sfa].
    \end{align*}
    Since $1-\frac{1}{2-p_{xy}}$ and $\mu[x \centernot{\xleftrightarrow{\omega_{\{xy\}}}} y \mid \sfa]$ are non-negative and decreasing functions of $\sfa$, it follows that $\sfa_x \sfa_y \left( \frac{h_{xy}}{p_{xy}} \mu[\omega_{xy} \mid \sfa] -1 \right)$ is an increasing function of $\sfa$ when $2\beta\sfa_x \sfa_y \geq 1$. When $2\beta\sfa_x \sfa_y < 1$, the claim follows from writing
    \begin{align*}
        \sfa_x \sfa_y \left( \frac{h_{xy}}{p_{xy}} \mu[\omega_{xy} \mid \sfa] -1 \right) &= \sfa_x \sfa_y \left(\frac{2 - p_{xy}}{p_{xy}}\left( p_{xy} \mu[x \xleftrightarrow{\omega_{\{xy\}}} y \mid \sfa] + \frac{p_{xy}}{2 - p_{xy}} \mu[x \centernot{\xleftrightarrow{\omega_{\{xy\}}}} y \mid \sfa]\right) -1 \right)\\
        &= \sfa_x \sfa_y (1-p_{xy}) \mu[x \xleftrightarrow{\omega_{\{xy\}}} y \mid \sfa],
    \end{align*}
    and using that $\sfa_x \sfa_y (1-p_{xy})$ is an increasing function (for $2\beta\sfa_x \sfa_y < 1$).
\end{proof}

As a corollary, we get the following monotonicity in $\beta$.

\begin{corollary}\label{cor:monotonicity in beta}
Let $0\leq \beta_1\leq \beta_2$. Then for every increasing function $f: (\mathbb{R}^{+})^{\overline{\Lambda}} \times \{0,1\}^{\overline{E}(\Lambda)}    \rightarrow \mathbb{R}$,
\[
\Psi_{\Lambda, \beta_1}^{(\xi, \sfb)}[f]\leq \Psi_{\Lambda, \beta_2}^{(\xi, \sfb)}[f].
\]
\end{corollary}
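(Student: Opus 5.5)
The plan is to integrate the derivative formula of Proposition~\ref{prop: beta derivative} in $\beta$ and show that the integrand is non-negative for increasing $f$. For every $\beta \in [\beta_1, \beta_2]$ with $\beta > 0$, Proposition~\ref{prop: beta derivative} gives
\[
\frac{\mathrm{d}}{\mathrm{d}\beta}\Psi_{\Lambda, \beta}^{(\xi, \sfb)}[f] = \sum_{xy \in \overline{E}(\Lambda)} \mathrm{Cov}_{\Lambda, \beta}^{(\xi, \sfb)}\left(f, \frac{2\sfa_x\sfa_y\omega_{xy}}{p_{xy}} - \sfa_x\sfa_y\right).
\]
Lemma~\ref{lem: cov comparison} bounds each term below by $\frac{1}{2\beta}\mathrm{Cov}_{\Lambda, \beta}^{(\xi, \sfb)}(f, g_{xy}\omega_{xy})$.

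The key observation is that $g_{xy}(\sfa)\,\omega_{xy}$ is a non-negative increasing function of $(\sfa,\omega)$. Indeed, $g_{xy}$ is non-negative and increasing in $\sfa_x\sfa_y$, hence in $\sfa$, since it is capped at the value $1-e^{-1}$ continuously. By the FKG inequality for $\Psi_{\Lambda, \beta}^{(\xi, \sfb)}$, which is Proposition~\ref{prop: FKG} with $\Lambda' = \emptyset$ and $E' = \emptyset$, this covariance is non-negative. Hence the derivative is non-negative for $\beta > 0$, and integrating from $\beta_1$ to $\beta_2$ gives the claim.

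Some technical points need care.

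\textbf{Boundedness.} First reduce to bounded increasing $f$. For unbounded $f$, apply the result to the truncations $\min(\max(f,-M),M)$, which are still increasing, and pass to the limit by monotone convergence. Integrability is guaranteed by the super-Gaussian tails of $\rho$.

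\textbf{Justifying the derivative formula.} Proposition~\ref{prop: beta derivative} requires differentiating under the integral sign. This is justified by the super-Gaussian tails: locally uniformly in $\beta$, the $\beta$-derivative of the density is dominated by a polynomial in $\sfa$ times $e^{C\sum_x \sfa_x^2}$, which is integrable. I take this as given from the proposition.

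\textbf{The endpoint $\beta_1 = 0$.} Lemma~\ref{lem: cov comparison} assumes $\beta > 0$, so the argument above does not apply directly at $\beta_1 = 0$. I would handle this by continuity of $\beta \mapsto \Psi_{\Lambda, \beta}^{(\xi, \sfb)}[f]$ at $0$, which follows from dominated convergence applied to the explicit density. Then the inequality for $\beta_1 = \epsilon > 0$ passes to the limit $\epsilon \to 0$.

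I expect this continuity at $0$, together with the differentiability in Proposition~\ref{prop: beta derivative}, to be the only subtle step. The sign argument itself is immediate once Lemma~\ref{lem: cov comparison} and FKG are in hand.
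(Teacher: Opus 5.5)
Your proof is correct and follows the paper's argument exactly: you combine Proposition~\ref{prop: beta derivative} with Lemma~\ref{lem: cov comparison}, then use FKG for the increasing function $g_{xy}\omega_{xy}$ to see that the $\beta$-derivative is non-negative. Your extra technical remarks (continuity at $\beta=0$, differentiation under the integral) are reasonable additions that the paper leaves implicit. In the truncation step, however, the convergence is not monotone, and integrability of $f$ must be assumed rather than deduced from the tails of $\rho$; write $f=f^+-f^-$ and apply monotone convergence to each part.
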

\begin{proof}
From Proposition \ref{prop: beta derivative} and Lemma \ref{lem: cov comparison}, we have
\begin{equation*}
    \frac{\mathrm{d} \Psi_{\Lambda, \beta}^{(\xi, \sfb)}[f]}{\mathrm{d} \beta}
    = \sum_{xy \in \overline{E}(\Lambda)}  \mathrm{Cov}_{\Lambda, \beta}^{(\xi, \sfb)} \left(f, \frac{2\mathsf{a}_x \mathsf{a}_y \omega_{xy}}{p_{xy}} - \sfa_x \sfa_y \right) \geq \frac{1}{2\beta} \sum_{xy \in \overline{E}(\Lambda)}  \mathrm{Cov}_{\Lambda, \beta}^{(\xi, \sfb)} \left(f, g_{xy} \omega_{xy}\right).
\end{equation*}
Since $g_{xy} \omega_{xy}$ is an increasing function, $\mathrm{Cov}_{\Lambda, \beta}^{(\xi, \sfb)} \left(f, g_{xy} \omega_{xy}\right) \geq 0$ for any $xy \in \overline{E}(\Lambda)$ by the FKG inequality. The desired result follows.
\end{proof}

The caveat of Lemma~\ref{lem: cov comparison} is that the function $g_{xy}$ is close to $0$ when $\beta \sfa_{x}\sfa_y$ is close to $0$, which does not directly allow us to get a lower bound that involves the covariance with respect to $\omega_{xy}$. Nonetheless, $g_{xy}$ is on average bounded away from $0$, which we combine with the (conditional) FKG inequality to get the following result.

\begin{lemma}\label{lem:cov comparison new}
    Let $\beta > 0$. There exists $\varepsilon=\varepsilon(\beta) > 0$ that depends continuously on $\beta$ such that for every $\Lambda \subset V$ finite, every edge $xy \in \overline{E}(\Lambda)$, and every increasing function $f: (\mathbb{R}^{+})^{\overline{\Lambda}} \times \{0,1\}^{\overline{E}(\Lambda)}    \rightarrow \mathbb{R}$,
    \begin{equation*}
        \mathrm{Cov}_{\Lambda, \beta}^{(\xi, \sfb)} \left(f, \frac{2\sfa_x \sfa_y \omega_{xy}}{p_{xy}} - \sfa_x \sfa_y \right) \geq \varepsilon \mathrm{Cov}_{\Lambda, \beta}^{(\xi, \sfb)}(f, \omega_{xy}).
    \end{equation*}
\end{lemma}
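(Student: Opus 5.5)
By Lemma~\ref{lem: cov comparison}, it suffices to find $\varepsilon'=\varepsilon'(\beta)>0$, continuous in $\beta$, such that
\[
\mathrm{Cov}_\mu(f, g_{xy}\,\omega_{xy}) \geq \varepsilon'\,\mathrm{Cov}_\mu(f,\omega_{xy}),
\]
where $\mu=\Psi_{\Lambda,\beta}^{(\xi,\sfb)}$; we then take $\varepsilon=\varepsilon'/(2\beta)$. The approach is to condition on the absolute value field near the edge, and to use that $g_{xy}$ depends only on $(\sfa_x,\sfa_y)$ and is increasing in it.

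\textbf{Step 1: condition on $(\sfa_x,\sfa_y)$.} For fixed $s=(\sfa_x,\sfa_y)$, write $\mu_s=\mu[\,\cdot\mid \sfa_x=s_x,\sfa_y=s_y]$. By the law of total covariance,
\[
\mathrm{Cov}_\mu(f,g\omega)=\mu\big[g\,\mathrm{Cov}_{\mu_s}(f,\omega_{xy})\big]+\mathrm{Cov}_\mu\big(\mu_s[f],\,g\,\mu_s[\omega_{xy}]\big).
\]
Two facts control these terms.
\begin{itemize}
\item By weak monotonicity (Proposition~\ref{prop: weak monotonicity}), both $\mu_s[f]$ and $g\,\mu_s[\omega_{xy}]$ are increasing in $s$.
\item The law of $(\sfa_x,\sfa_y)$ satisfies FKG, by Proposition~\ref{prop: FKG} applied to functions of $\sfa$. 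Hence the second term is at least $\mathrm{Cov}_\mu(\mu_s[f],\,g\,\mu_s[\omega_{xy}])\geq 0$.
\end{itemize}
The conditional measure $\mu_s$ is FKG by Proposition~\ref{prop: FKG} with $\Lambda'=\{x,y\}$. Therefore $\mathrm{Cov}_{\mu_s}(f,\omega_{xy})\geq 0$, and the first term is nonnegative.

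\textbf{Step 2: decompose $\mathrm{Cov}_\mu(f,\omega_{xy})$ the same way.} We have
\[
\mathrm{Cov}_\mu(f,\omega)=\mu\big[\mathrm{Cov}_{\mu_s}(f,\omega)\big]+\mathrm{Cov}_\mu\big(\mu_s[f],\mu_s[\omega]\big).
\]
We must bound each piece by a multiple of the corresponding piece above.

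On $\{g\geq \delta\}$, the first pieces compare with constant $\delta$. The obstacle is the region where $g$ is small, i.e.\ where $\sfa_x\sfa_y$ is small. There $\mu_s[\omega_{xy}]\leq p_{xy}$ is also small, so the edge barely matters.

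To handle this, write $\mathrm{Cov}_{\mu_s}(f,\omega)=\mu_s[\omega]\,\big(\mu_s[f\mid\omega=1]-\mu_s[f]\big)$. The bracket is bounded by $\mu_s[f\mid \omega_{xy}=1]-\mu_s[f\mid\omega_{xy}=0]$. By weak monotonicity, this difference is dominated in terms of $\mu_{s'}$ for larger $s'$, but I would rather avoid needing monotonicity of the difference itself.

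\textbf{Step 3: the main trick.} Instead of comparing pointwise in $s$, I would use an averaging argument. The key claim is that
\[
\mathrm{Cov}_\mu(f,\omega_{xy}) \le C(\beta)\,\mathrm{Cov}_\mu(f,g\,\omega_{xy}).
\]
To prove it, write $\omega_{xy}=g\omega_{xy}+(1-g)\omega_{xy}$. It then suffices to bound $\mathrm{Cov}_\mu(f,(1-g)\omega_{xy})$. Since $(1-g)\omega_{xy}\le \omega_{xy}\le 1$, this covariance is at most $\mu[(1-g)\omega_{xy}(f-\mu f)]$.

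On $\{\omega_{xy}=1\}$ we have $\sfa_x\sfa_y>0$, and $(1-g)\,p_{xy}\le$ const. Combined with $\mu_s[\omega_{xy}]\geq p/(2-p)$ from \eqref{eq: finite-energy}, this yields $\mu_s[\omega_{xy}]\le p_{xy}$ and $\mu_s[\omega_{xy}]\ge p_{xy}/2$. Consequently $g\,\mu_s[\omega]$ and $\mu_s[\omega]$ are comparable up to the factor $\frac{g}{p_{xy}}\cdot 2$, and $g/p_{xy}$ is bounded below by $\min(1,(1-e^{-1}))$ when $2\beta\sfa_x\sfa_y<1$. The issue is only the large-$\sfa$ regime, where $g$ is constant. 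Hence $g\,\omega_{xy}\ge c\,\mu_s[\omega]^{-1}p_{xy}\ldots$.

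Concretely, I expect $g_{xy}\ge (1-e^{-1})\,p_{xy}$ always. I would therefore replace $g$ by $p_{xy}$ and prove $\mathrm{Cov}(f,p_{xy}\omega)\ge c\,\mathrm{Cov}(f,\omega)$ via the conditional decomposition. In that decomposition the only nontrivial comparison is $\mu[p\,\mathrm{Cov}_{\mu_s}]$ versus $\mu[\mathrm{Cov}_{\mu_s}]$. For this I would use that $\mathrm{Cov}_{\mu_s}(f,\omega)\le \mu_s[\omega]\le p_{xy}$ and that $\mu_s[\omega]\ge p_{xy}/2$. The remaining constant should be produced by the fact that $\sfa$ has nonvanishing weight at large values. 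That estimate, uniform in $\Lambda$ and $\sfb$, is the main obstacle.
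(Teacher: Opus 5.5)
\textbf{There is a genuine gap.} Your first step is the same as the paper's: using Lemma~\ref{lem: cov comparison} to reduce to $\mathrm{Cov}_\mu(f,g_{xy}\omega_{xy})\ge\varepsilon'\,\mathrm{Cov}_\mu(f,\omega_{xy})$, i.e.\ to $\mathrm{Cov}_\mu(f,(g_{xy}-\varepsilon)\omega_{xy})\ge 0$. The total-covariance decompositions in Steps~1--2 are also correct. But the proposal never proves the comparison it needs, as you acknowledge yourself.

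The obstacle is structural. Once you condition on $s=(\sfa_x,\sfa_y)$, you must control $\mu\bigl[(g_{xy}-\varepsilon)\,\mathrm{Cov}_{\mu_s}(f,\omega_{xy})\bigr]$. Its integrand is negative on $\{g_{xy}<\varepsilon\}$, and nothing in your argument controls how much of the mass of $\mathrm{Cov}_{\mu_s}(f,\omega_{xy})$ sits there. The same problem arises for the $\mathrm{Cov}_\mu(\mu_s[f],\cdot)$ terms.
\begin{itemize}
\item The absolute bound $\mathrm{Cov}_{\mu_s}(f,\omega_{xy})\le p_{xy}$ holds only for $f$ with values in $[0,1]$. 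Even then it gives an additive error of order $\delta\,\mu[p_{xy}<\delta]$. This cannot be compared with $\mathrm{Cov}_\mu(f,\omega_{xy})$, which may be arbitrarily small.
\item A relative comparison would need monotonicity in $s$ of $\mathrm{Cov}_{\mu_s}(f,\omega_{xy})$, or of $\mu_s[f\mid\omega_{xy}=1]-\mu_s[f\mid\omega_{xy}=0]$. This does not follow from weak monotonicity, and you wanted to avoid it.
\item Replacing $g_{xy}$ by $p_{xy}$ does not help. The pointwise bound $g_{xy}\ge(1-e^{-1})p_{xy}$ does not pass to covariances, because $g_{xy}-c\,p_{xy}$ is not increasing. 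Moreover, $p_{xy}$ is small on the same bad region.
\end{itemize}

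\textbf{The missing idea} is to condition on the edge being open rather than on its endpoints, so that $g_{xy}$ enters only through its average. Write
\[
\mathrm{Cov}_\mu\bigl(f,(g_{xy}-\varepsilon)\omega_{xy}\bigr)=\mu[\omega_{xy}=1]\;\mu\bigl[(f-\mu[f])(g_{xy}-\varepsilon)\,\big|\,\omega_{xy}=1\bigr].
\]
Now apply the FKG inequality for $\mu[\,\cdot\mid\omega_{xy}=1]$, which is Proposition~\ref{prop: open edge FKG}. Your plan never uses it, and it is the nontrivial input here, since this conditioning is not covered by Proposition~\ref{prop: FKG}. It bounds the right-hand side below by
\[
\mu[\omega_{xy}=1]\,\mu[f-\mu[f]\mid\omega_{xy}=1]\,\bigl(\mu[g_{xy}\mid\omega_{xy}=1]-\varepsilon\bigr).
\]
The middle factor is nonnegative by FKG for $\mu$. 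For the last factor, regard $g_{xy}$ as a fixed increasing function of $\sfa$; then FKG and Corollary~\ref{cor:monotonicity in beta} give $\mu[g_{xy}\mid\omega_{xy}=1]\ge\mu[g_{xy}]\ge\Psi^{(\xi,\sfb)}_{\Lambda,0}[g_{xy}]$.

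This also removes what you call the main obstacle, uniformity in $\Lambda$. At $\beta=0$ all edges are closed and $\sfa|_\Lambda$ is i.i.d.\ with law given by $\rho$. So for edges inside $\Lambda$, $\Psi^{(\xi,\sfb)}_{\Lambda,0}[g_{xy}]$ is an explicit integral that does not depend on $\Lambda$. It is positive because $\rho(\mathbb{R}\setminus\{0\})>0$, and it is continuous in $\beta$. For edges meeting $\partial^{\mathrm{ext}}\Lambda$ one needs a lower bound on $\sfb$ there, which holds for $\sfb=\mathsf{p}$ in the application.

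Choosing $\varepsilon$ below this integral gives $\mathrm{Cov}_\mu(f,g_{xy}\omega_{xy})\ge\varepsilon\,\mathrm{Cov}_\mu(f,\omega_{xy})$. Lemma~\ref{lem: cov comparison} then concludes, with constant $\varepsilon/(2\beta)$.
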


\begin{proof}
Below we write $\mu = \Psi_{\Lambda, \beta}^{(\xi, \sfb)}$ to simplify the notation. By Lemma \ref{lem: cov comparison}, it suffices to prove that there exists $\varepsilon > 0$ such that $\mathrm{Cov}_{\mu}(f, (g_{xy} - \varepsilon)\omega_{xy}) \geq 0$. Let $\varepsilon > 0$ be a constant to be determined. Then
    \begin{align*}
        \mathrm{Cov}_{\mu}(f, (g_{xy} - \varepsilon)\omega_{xy}) &= \mu \left[(f(\sfa, \omega) - \mu[f]) (g_{xy} - \varepsilon) \omega_{xy}\right]\\
        &= \mu[\omega_{xy} = 1] \mu[(f(\sfa, \omega) - \mu[f]) (g_{xy} - \varepsilon) \mid \omega_{xy} = 1]\\
        &\geq \mu[\omega_{xy} = 1] \mu[f(\sfa, \omega) - \mu[f] \mid \omega_{xy} = 1] \mu[g_{xy} - \varepsilon \mid \omega_{xy} = 1],
    \end{align*}
    where the last line above follows from Proposition \ref{prop: open edge FKG} since $f(\sfa, \omega) - \mu[f]$ and $g_{xy} - \varepsilon$ are increasing functions.
    By the FKG inequality applied to $\mu$, we have $\mu[f(\sfa, \omega) - \mu[f] \mid \omega_{xy} = 1] \geq 0$. 
    Also note that we can choose $\varepsilon > 0$ small enough that $\mu[g_{xy} - \varepsilon \mid \omega_{xy} = 1] \geq 0$. Indeed, by the FKG inequality we have $\mu[g_{xy} \mid \omega_{xy} = 1]\geq \mu[g_{xy}]$, and the latter remains bounded away from $0$ since $\Psi_{\Lambda, 0}^{(w, \sfb)}[g_{xy}]$ remains bounded away from $0$ and we have monotonicity in $\beta$ by Corollary~\ref{cor:monotonicity in beta}. The desired result follows.
\end{proof}

The next step is to apply the OSSS inequality to certain well-chosen decision trees that determine the event $\{o \longleftrightarrow \partial \Lambda_n\}$. As in \cite[Lemma 8.2]{Blume-Capel} and \cite[Lemma 3.2]{sharpness_annals}, we consider a family of decision trees exploring the cluster of $\partial \Lambda_k$ for $k \in \{1, \ldots ,n\}$ and we then average over $k$.
\begin{lemma}
    \label{Lem: decision tree}
    Suppose $n \geq 1$ and $\Lambda \supset \Lambda_{2n}$. Let $\mu$ be a measure on $(\mathbb{R}^{+})^{\Lambda} \times \{0, 1\}^{\overline{E}(\Lambda)}$ that satisfies weak monotonicity and \eqref{eq: zero conditioning}. Then
    \begin{align*}
        \sum_{e \in \overline{E}(\Lambda)} \mathrm{Cov}_\mu(\mathbbm{1}_{\{o \longleftrightarrow \partial \Lambda_n\}}, \omega_{e}) \geq \frac{n \min_{x \in \Lambda_n} \mu[C_x]}{(4D+6) Q_n} \mu[o \longleftrightarrow \partial \Lambda_n] (1 - \mu[o \longleftrightarrow \partial \Lambda_n] ),
    \end{align*}
    where $Q_n \coloneq \max_{x \in \Lambda_n} \sum_{k=0}^{n-1} \mu[x \longleftrightarrow \partial \Lambda_k(x)]$.
\end{lemma}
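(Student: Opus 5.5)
Fix $k\in\{1,\dots,n\}$ and build an admissible decision tree $T_k$ that explores the cluster of $\partial\Lambda_k$ from the outside in, as in \cite[Lemma 3.2]{sharpness_annals} and \cite[Lemma 8.2]{Blume-Capel}.

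\emph{Construction of $T_k$.} First reveal $\sfa_x$ for all $x\in\partial\Lambda_k$. Then repeat: take the first (in a fixed order) unexplored edge $xy\in\overline{E}(\Lambda_n)$ with at least one endpoint already known to be in the cluster of $\partial\Lambda_k$ (explored through open edges inside $\Lambda_n$). Reveal any unrevealed endpoint of $xy$ that lies in $\Lambda$, then reveal $\omega_{xy}$. Stop when no such edge remains. At that point the event $\{o\longleftrightarrow\partial\Lambda_n\}$ is determined, since it holds iff $o$ is in the explored cluster and that cluster reaches $\partial\Lambda_n$; any path from $o$ to $\partial\Lambda_n$ crosses $\partial\Lambda_k$. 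Afterwards, reveal the remaining variables in any admissible order. The tree is admissible by construction, so Theorem~\ref{Thm: OSSS} applies with $f=\mathbbm{1}_{\{o\longleftrightarrow\partial\Lambda_n\}}$, which is increasing and depends only on edges.

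\emph{Revealment bounds.} An edge $xy$ is revealed before $\tau$ only if one of its endpoints, say $x$, is connected to $\partial\Lambda_k$. Hence
\[
\mu[R(xy)]\le \mu[x\longleftrightarrow\partial\Lambda_k]+\mu[y\longleftrightarrow\partial\Lambda_k].
\]
A vertex $x\in\Lambda$ is revealed only if $x\in\partial\Lambda_k$, or $x$ is adjacent to some $z$ connected to $\partial\Lambda_k$. This gives
\[
\mu[R(x)]\le \mathbbm{1}_{x\in\partial\Lambda_k}+\sum_{z\sim x}\mu[z\longleftrightarrow\partial\Lambda_k].
\]
For any vertex $w$ at distance $j$ from $o$, we have $\{w\longleftrightarrow\partial\Lambda_k\}\subset\{w\longleftrightarrow\partial\Lambda_{|k-j|}(w)\}$ when $k\neq j$, with the trivial bound $1$ when $k=j$.

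\emph{Averaging over $k$.} Summing over $k=1,\dots,n$ gives, for each vertex $w$,
\[
\sum_k\mu[w\longleftrightarrow\partial\Lambda_k]\le 2Q_n+1\le 3Q_n,
\]
using $Q_n\ge1$ (the $k=0$ term equals $1$). Vertices outside $\Lambda_n$ are handled the same way, or their revealment is bounded trivially. Substituting into the definition of $\delta_{xy}$, with $1/\mu[C_x]\le 1/\min_{\Lambda_n}\mu[C]$ and $\mu[C_x]\le1$, we get
\[
\sum_k\delta_{xy}(f,T_k)\le \frac{(4D+6)Q_n}{\min_{x\in\Lambda_n}\mu[C_x]}.
\]
The edge term contributes $6Q_n$ (two endpoints, each at most $3Q_n$). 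The two vertex terms contribute at most $2\bigl(1+D\cdot 2Q_n\bigr)\le 4DQ_n+\dots$; the exact bookkeeping of the indicator $\mathbbm{1}_{x\in\partial\Lambda_k}$, which is nonzero for only one $k$, is absorbed using $Q_n\ge1$. Averaging the OSSS inequality over $k$ then gives
\[
\mathrm{Var}_\mu(f)\le \frac{(4D+6)Q_n}{n\min\mu[C_x]}\sum_e \mathrm{Cov}_\mu(f,\omega_e).
\]
Here all covariances are nonnegative by FKG (Proposition~\ref{prop: monotonicity implies FKG} with no conditioning). Since $\mathrm{Var}_\mu(f)=\mu[o\longleftrightarrow\partial\Lambda_n](1-\mu[o\longleftrightarrow\partial\Lambda_n])$, this is the claim.

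\emph{Main obstacle.} The delicate part is the counting of constants in the vertex revealments. A vertex $x$ may be revealed because any of its $D$ neighbours lies in the cluster, which produces the factor $D$ and the division by $\mu[C_x]$. The minimum over $x\in\Lambda_n$ requires that only vertices of $\Lambda_n$ are revealed before $\tau$, so I would restrict the exploration to edges of $\overline{E}(\Lambda_n)$. For edges with an endpoint in $\partial^{\mathrm{ext}}\Lambda_n$, I would check that the needed vertex is either already in $\Lambda_n$ or never needed before determination. The hypothesis $\Lambda\supset\Lambda_{2n}$ ensures that all balls $\Lambda_k(x)$ with $x\in\Lambda_n$, $k<n$, lie in $\Lambda$, so that $Q_n$ is well defined.
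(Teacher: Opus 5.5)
Your route is the paper's: trees $T(k)$ started from $\partial\Lambda_k$, the same revealment bounds, and averaging over $k$. However, two steps do not go through as written.

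First, your tree explores edges of $\overline{E}(\Lambda_n)$, so it must reveal their endpoints in $\partial^{\mathrm{ext}}\Lambda_n$ before revealing the edges. This happens before $\tau_{f,T_k}$ with positive probability. For instance, for $k=n$ the very first edges chosen may go from $\partial\Lambda_n$ to $\partial^{\mathrm{ext}}\Lambda_n$, while $f$ is still undetermined. The OSSS bound then contains terms $\mu[R_{f,T_k}(y)]/\mu[C_y]$ with $y\notin\Lambda_n$, and $\min_{x\in\Lambda_n}\mu[C_x]$ does not control these. The remedy in your last paragraph does not help. Restricting to $\overline{E}(\Lambda_n)$ is exactly what creates the problem, and in your construction the outer endpoint is always revealed before its edge, so it is "needed before determination".

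The paper instead explores only edges of $E(\Lambda_n)=\{xy\in E: x,y\in\Lambda_n\}$, and only vertices of $\Lambda_n$ adjacent to the explored set. This still determines $f$, because $\{o\longleftrightarrow\partial\Lambda_n\}$ is measurable with respect to $\omega|_{E(\Lambda_n)}$: an open path from $o$ reaches $\partial\Lambda_n$ before using any edge that leaves $\Lambda_n$. With this restriction, only vertices of $\Lambda_n$ are revealed before $\tau$.

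Second, your counting does not produce the constant $4D+6$. Suppose you use your bound $\sum_{k=1}^n\mu[w\longleftrightarrow\partial\Lambda_k]\le 2Q_n+1\le 3Q_n$ consistently. Then the edge terms give $6Q_n$ and the neighbour terms give $6DQ_n$; you switch to $2Q_n$ for the latter without justification. Even your mixed count gives $(4D+8)Q_n$.

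The missing observation is that the term $k=d_G(o,w)$ is not an extra $+1$. It is the summand $\mu[w\longleftrightarrow\partial\Lambda_0(w)]=1$ already contained in $Q_n$, since $\partial\Lambda_0(w)=\{w\}$. Moreover, each value $m=|k-d_G(o,w)|$ occurs at most twice; for $w=o$, use that $m\mapsto\mu[o\longleftrightarrow\partial\Lambda_m]$ is decreasing. Hence $\sum_{k=1}^n\mu[w\longleftrightarrow\partial\Lambda_k]\le 2Q_n$ for every $w\in\Lambda_n$. With this bound, the edge terms contribute $4Q_n$, the indicators $\mathbbm{1}_{x\in\partial\Lambda_k}$ contribute at most $2\le 2Q_n$, and the neighbour sums contribute $4DQ_n$. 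This gives the paper's $(4D+6)Q_n$.
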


\begin{proof}
    Let $f = \mathbbm{1}_{\{o \longleftrightarrow \partial \Lambda_n\}}$. Fix arbitrary orderings of $\Lambda$ and $\overline{E}(\Lambda)$.
    For $k \in \{1, \ldots, n\}$, define a decision tree $T(k)$ determining the function $f$ as follows.
    First explore all vertices in $\partial \Lambda_k$ one at a time according to the ordering, and set $V_1 = \partial \Lambda_k$.
    For $t \geq 1$, we determine the decision rule at time $|\partial \Lambda_k| + t$ inductively according to the following cases.
    \begin{itemize}
        \item If there exists an unexplored edge in $E(\Lambda_n) \coloneq \{xy \in E: x, y \in \Lambda_n\}$ with both endpoints explored and at least one endpoint in $V_t$, explore the first such edge $xy$. Set $V_{t+1} = V_t \cup \{x, y\}$ if the edge is open and $V_{t+1} = V_t$ otherwise ($V_t$ tracks which vertices are connected to $\partial \Lambda_k$).
        \item 
        If there is no such edge but there is an unexplored vertex in $\Lambda_n$ adjacent to a vertex in $V_t$, explore the first such vertex and set $V_{t+1} = V_t$.
        \item
        If no such vertex exists either, then we have finished exploring the cluster of $\partial \Lambda_k$, which means that the event $\{o \longleftrightarrow \partial \Lambda_n\}$ has been determined. We now explore the first unexplored vertex according to the arbitrary ordering, or if all vertices have been explored, we explore the first unexplored edge.
    \end{itemize}
    Since each edge is explored after its endpoints, $T(k)$ is admissible. Moreover, edge $xy$ is only revealed before time $\tau_{f, T(k)}$ if $xy \in E(\Lambda_n)$ and at least one of its endpoints is connected to $\partial \Lambda_k$, which implies 
    \begin{equation}
        \label{eq: edge revealment}
        \mu[R_{f, T(k)}(xy)] \leq \mathbbm{1}_{xy \in E(\Lambda_n)}(\mu[x \longleftrightarrow \partial \Lambda_k] + \mu[y \longleftrightarrow \partial \Lambda_k]).
    \end{equation}
    Also note that we only reveal a vertex $x$ before time $\tau_{f, T(k)}$ if $x \in \partial \Lambda_k$ or $x$ is both in $\Lambda_n$ and adjacent to a vertex in the cluster of $\partial \Lambda_k$. Hence,
    \begin{equation}
        \label{eq: vertex revealment}
        \mu[R_{f, T(k)}(x)] \leq \mathbbm{1}_{x \in \partial \Lambda_k} + \mathbbm{1}_{x \in \Lambda_n \setminus \partial \Lambda_k} \sum_{\substack{y \in \Lambda_n \\ y \sim x}} \mu[y \longleftrightarrow \partial \Lambda_k].
    \end{equation}
    Applying Theorem \ref{Thm: OSSS} to $T(k)$ and using \eqref{eq: edge revealment} and \eqref{eq: vertex revealment} yields
    \begin{equation}
    \label{eq: apply OSSS}
    \begin{aligned}
        \mathrm{Var}_{\mu}(f) \leq \frac{1}{\min_{x \in \Lambda_{n}} \mu[C_x]} \sum_{xy \in \overline{E}(\Lambda_n)} \bigg( \mu[x \longleftrightarrow \partial \Lambda_k] + \mu[y \longleftrightarrow \partial \Lambda_k] + \mathbbm{1}_{x \in \partial \Lambda_k} + \mathbbm{1}_{y \in \partial \Lambda_k}\\ + \sum_{\substack{z \in \Lambda_n \\ z \sim x}} \mu[z \longleftrightarrow \partial \Lambda_k] + \sum_{\substack{z \in \Lambda_n \\ z \sim y}} \mu[z \longleftrightarrow \partial \Lambda_k] \bigg) \mathrm{Cov}_{\mu}(f, \omega_{xy}).
    \end{aligned}
    \end{equation}
    Now observe that for every $x \in \Lambda_{n}$,
    \begin{equation*}
        \sum_{k=1}^{n} \mu[x \longleftrightarrow \partial \Lambda_k] \leq \sum_{k=1}^{n} \mu[x \longleftrightarrow \partial \Lambda_{|k-d_G(o, x)|}(x)]
        \leq 2 Q_n,
    \end{equation*}
    so summing \eqref{eq: apply OSSS} over $k \in \{1, \ldots, n\}$ gives
    \begin{align*}
        n\mathrm{Var}_{\mu}(f) \leq \frac{(4D + 6)Q_n}{\min_{x \in \Lambda_n} \mu[C_x]} \sum_{xy \in \overline{E}(\Lambda_n)} \mathrm{Cov}_{\mu}(f, \omega_{xy}),
    \end{align*}
    from which the desired result follows.
\end{proof}

We are now ready to prove Theorem \ref{Thm: sharpness RC}.
\begin{proof}[Proof of Theorem \ref{Thm: sharpness RC}]
    Let $M>0$ and define $\mathsf{p}_x = e^{M d_G(o,x)}$.
    By monotonicity in boundary conditions (Proposition \ref{prop: bc monotonicity for RC}), it suffices to consider the case of wired boundary conditions with $\sfb = \mathsf{p}$ for a sufficiently large choice of $M$.
    Set $\theta_n (\beta) = \Psi_{\Lambda_{2n}, \beta}^{(w, \mathsf{p})}[o \longleftrightarrow \partial \Lambda_n]$.
    Combining Proposition \ref{prop: beta derivative} and Lemma~\ref{lem:cov comparison new}, we see that for any $n \geq 1$ and $\beta > 0$
    \begin{equation*}
        \frac{\mathrm{d} \theta_n}{\mathrm{d} \beta} \geq \varepsilon \sum_{e \in \overline{E}(\Lambda_{2n})} \mathrm{Cov}_{\Lambda_{2n}, \beta}^{(w, \mathsf{p})}(\mathbbm{1}_{\{o \longleftrightarrow \partial \Lambda_n\}}, \omega_e).
    \end{equation*}
    Observe that $\Psi_{\Lambda_{2n}, \beta}^{(w, \mathsf{p})}$ satisfies weak monotonicity and \eqref{eq: zero conditioning} by Proposition~\ref{prop: weak monotonicity} and Lemma~\ref{Lemma: zero conditioning}, respectively. Hence we can apply Lemma \ref{Lem: decision tree} to get
    \begin{equation*}
        \frac{\mathrm{d} \theta_n}{\mathrm{d} \beta} \geq \frac{\varepsilon n \min_{x \in \Lambda_n} \Psi_{\Lambda_{2n}, \beta}^{(w, \mathsf{p})}[C_x]}{(4D+6) Q_n} \theta_n (1 - \theta_n ).
    \end{equation*}
    
    Recall that $\beta_c>0$ is non-trivial by Proposition~\ref{prop:non-trivial} and consider some $\beta_0\in (0,\beta_c)$. Since $\beta_0$ is arbitrary, to get the desired subcritical sharpness, it suffices to prove that $\theta_n(\beta)$ decays exponentially for every $\beta\in [\beta_0, 2\beta_0]\cap (0,\beta_c)$. To this end, our aim is to show that there exists $t>0$ such that for every $\beta\in [\beta_0,2\beta_0]$,
    \begin{equation}
    \label{eq: differential inequality}
    \frac{\mathrm{d} \theta_n}{\mathrm{d} \beta} \geq t \frac{n}{\sum_{k = 0}^{n-1} \theta_{k}(\beta)}\theta_n(\beta).
    \end{equation}
    
First, note that for $\beta\in [\beta_0, 2\beta_0]$,  $\varepsilon$ is bounded away from $0$ since it depends continuously on $\beta$. Next, we handle $\Psi_{\Lambda_{2n}, \beta}^{(w, \mathsf{p})}[C_x](1 - \theta_n(\beta))$. Since the distance between $\Lambda_n$ and the boundary of $\Lambda_{2n}$ is equal to $n$ and the boundary conditions $\mathsf{p}$ have growth that is at most exponential, we can use the regularity statement of Proposition~\ref{prop: regularity} to get that for every $M>0$ there exists $C>0$ such that for every $n\geq 1$, every $x\in \Lambda_n$ and every $\beta\in [\beta_0,2\beta_0]$, 
\[\Psi_{\Lambda_{2n}, \beta}^{(w, \mathsf{p})}[\sfa_x\leq C, \sfa_y\leq C \, \forall\, y\sim x]\geq 1/2.
\]
Hence 
\begin{equation}\label{eq:C_x bound}
\Psi_{\Lambda_{2n}, \beta}^{(w, \mathsf{p})}[C_x]\geq \frac{e^{-2\beta D C^2}}{2}\geq \frac{e^{-4\beta_0 D C^2}}{2},
\end{equation}
and similarly 
\begin{equation}\label{eq:1-theta bound}
1-\theta_n(\beta)\geq \Psi_{\Lambda_{2n}, \beta}^{(w, \mathsf{p})}[C_o]\geq \frac{e^{-4\beta_0 D C^2}}{2}.
\end{equation}

To bound $Q_n$, consider $x\in \Lambda_n$ and let $\tau_x \mathsf{p}$ be the field $\mathsf{p}$ translated by $x$, i.e.\ $(\tau_x \mathsf{p})_y = e^{M d_G(x, y)}$. By inclusion of events and monotonicity in boundary conditions,
\begin{equation}\label{eq:Q_n bound}
\begin{aligned}
\sum_{k=0}^{n-1} \Psi_{\Lambda_{2n}, \beta}^{(w, \mathsf{p})}[x \longleftrightarrow \partial \Lambda_k(x)]&\leq 4 \sum_{k\leq n/4} \Psi_{\Lambda_{2n}, \beta}^{(w, \mathsf{p})}[x \longleftrightarrow \partial \Lambda_k(x)]\\ 
&\leq 4\sum_{k\leq n/4} \theta_k(\beta) + \Psi_{\Lambda_{2n}, \beta}^{(w, \mathsf{p})}[\mathcal{E}_k(x)],
\end{aligned}
\end{equation}
where $\mathcal{E}_k(x)=\{\exists \, y\in \partial^{\mathrm{ext}} \Lambda_{2k}(x): \sfa_y > (\tau_x \mathsf{p})_y\}$ and we have used that $\Psi_{\Lambda_{2k}(x), \beta}^{(w, \tau_x \mathsf{p})}[x \longleftrightarrow \partial \Lambda_k(x)] = \theta_k(\beta)$ by translation invariance. For $k \leq n/4$, $\Lambda_{2k}(x) \subset \Lambda_{3n/2}$, so by regularity there exists $c>0$ such that for every $n \geq 1$, $k\leq n/4$ and $x\in \Lambda_n$,
\begin{equation}\label{eq:E_k bound}
\Psi_{\Lambda_{2n}, \beta}^{(w, \mathsf{p})}[\mathcal{E}_k(x)]\leq e^{-ck}.
\end{equation}
On the other hand, by simply opening a geodesic connecting $o$ to $\partial \Lambda_k$ and using the FKG inequality, we see that $\Psi_{\Lambda_{2k}, \beta}^{(w, \mathsf{p})}[o \longleftrightarrow \partial \Lambda_k]\geq e^{-c'k}$, where here we note that the probability an edge is open remains bounded away from $0$ by monotonicity in the boundary conditions and the fact that this holds for $\Lambda$ consisting of 2 neighbouring vertices. By tuning the value of $M$, we can make the value of $c$ large enough so that $c>c'$. We now fix such a value of $M$ and for this choice we get 
\[
\Psi_{\Lambda_{2n}, \beta}^{(w, \mathsf{p})}[\mathcal{E}_k(x)]\leq \Psi_{\Lambda_{2k}, \beta}^{(w, \mathsf{p})}[o \longleftrightarrow \partial \Lambda_k].
\]
Combining the latter inequality with \eqref{eq:Q_n bound}, we deduce that 
\[
Q_n \leq 8 \sum_{k\leq n/4} \theta_k(\beta) \leq 8 \sum_{k=0}^{n-1} \theta_k(\beta),
\]
which when combined in turn with \eqref{eq:C_x bound} and \eqref{eq:1-theta bound} gives \eqref{eq: differential inequality}.

Note that $\theta_n(\beta)$ converges to $\Psi^{1}_{\beta}[o\longleftrightarrow \infty]$ as $n\to\infty$. Indeed, recall that $\Psi_{\Lambda_{2n}, \beta}^{(w, \mathsf{p})}$ converges weakly to $\Psi_{\beta}^{1}$ by Proposition~\ref{prop: plus convergence}. Now 
\[
\Psi^{1}_{\beta}[o\longleftrightarrow \infty]\leq \liminf_{n\to\infty}\theta_n(\beta)+\liminf_{n\to\infty}\Psi^{1}_{\beta}[\mathcal{E}_n(o)]=\liminf_{n\to\infty}\theta_n(\beta)
\]
by monotonicity in boundary conditions and \eqref{eq:E_k bound}, and for every $k\geq 1$,
\[
\limsup_{n\to\infty}\theta_n(\beta)\leq \lim_{n\to\infty}\Psi_{\Lambda_{2n}, \beta}^{(w, \mathsf{p})}[o\longleftrightarrow \partial \Lambda_k]=\Psi_{\beta}^{1}[o\longleftrightarrow \partial \Lambda_k]
\xrightarrow[k\to\infty]{}
\Psi_{\beta}^{1}[o\longleftrightarrow \infty].
\]
Applying \cite[Lemma 3.1]{sharpness_annals} to $f_n \coloneq \theta_n/t$, we obtain the existence of a critical point $\beta_1 \in [\beta_0, 2\beta_0]$ satisfying the following two properties:
\begin{itemize}
\item
For any $\beta \in [\beta_0, \beta_1)$, there exists $c = c_{\beta} > 0$ such that for any $n\geq 0$, $\theta_n(\beta) \leq e^{-c n}$.
\item
For any $\beta \in (\beta_1, 2\beta_0]$, $\Psi_{\beta}^{1}[o \longleftrightarrow \infty] \geq t(\beta - \beta_1)>0$.
\end{itemize}
Since $\beta_c = \inf\{\beta \geq 0 : \Psi_{\beta}^{1} [o \longleftrightarrow \infty] > 0\}$ (see Remark \ref{remark: infinite coupling}), it follows that $\theta_n(\beta)$ decays exponentially for every $\beta\in [\beta_0, 2\beta_0]\cap (0,\beta_c)$, and we also get the mean-field lower bound. Since $\Psi_{\Lambda_{2n}, \beta}^{(w, \mathsf{p})}[o \longleftrightarrow \partial \Lambda_{2n}] \leq \theta_n$, we deduce that $\Psi_{\Lambda_{m}, \beta}^{(w, \mathsf{p})}[o \longleftrightarrow \partial \Lambda_{m}]$ decays exponentially for even $m$, and one can handle the case of odd $m$ similarly. 
\end{proof}

\section{Proof of Theorem \ref{thm: main theorem}}\label{sec:sharpness}
In this section, we use Theorem \ref{Thm: sharpness RC} to prove Theorem \ref{thm: main theorem}. 

\begin{proof}[Proof of Theorem~\ref{thm: main theorem}]
Let $\beta<\beta_c$, $x\in V$ with $d_G(o, x) = k \geq 1$, and $n \geq k$. Let $\lambda > 0$ be a constant to be determined and define $\mathsf{p} \in (\mathbb{R}^{+})^{V}$ by $\mathsf{p}_y = e^{\lambda d_G(o,y)}$.
By Proposition~\ref{prop:monotonicity in bc} and Corollary \ref{Cor: coupling}, for any boundary conditions $|\eta| \leq \mathsf{p}$ we have
\[
\langle \varphi_o \varphi_x \rangle^{\eta}_{\Lambda_n,\beta}\leq
\langle \varphi_o \varphi_x \rangle^{\mathsf{p}}_{\Lambda_n,\beta}=\Psi_{\Lambda_{n}, \beta}^{(w, \mathsf{p})}[\sfa_o \sfa_x \mathbbm{1}_{o\longleftrightarrow x}].
\]
Recall that the distribution of $\sfa$ under $\Psi_{\Lambda_n, \beta}^{(w, \mathsf{p})}$ is the same as the distribution of $|\varphi|$ under $\nu_{\Lambda_n, \beta}^{\mathsf{p}}$,
so we can use the Cauchy-Schwarz inequality to get 
\[
\langle \varphi_o \varphi_x \rangle^{\eta}_{\Lambda_n, \beta}\leq \sqrt{\langle \varphi^2_o \varphi^2_x \rangle^{\mathsf{p}}_{\Lambda_n, \beta} \Psi_{\Lambda_{n}, \beta}^{(w, \mathsf{p})}[o\longleftrightarrow x]}.
\]
We now use regularity to bound $\langle \varphi^2_o \varphi^2_x \rangle^{\mathsf{p}}_{\Lambda_n, \beta}$. Fix $a > 2 D \beta$ and let $A_x = A(x, \Lambda_n, \mathsf{p}, \beta, a)$. Using Proposition \ref{prop: regularity}, we have for some constant $B > 0$ 
\[\langle \varphi^2_o \varphi^2_x \rangle^{\mathsf{p}}_{\Lambda_n, \beta}\leq A_x^2\langle \varphi^2_o\rangle^{\mathsf{p}}_{\Lambda_n, \beta} + \langle \varphi^2_o\varphi^2_x \mathbbm{1}_{|\varphi_x|\geq  A_x}\rangle^{\mathsf{p}}_{\Lambda_n, \beta} 
\leq A_x^2\langle \varphi^2_o\rangle^{\mathsf{p}}_{\Lambda_n, \beta} + e^{2B A_x^2} \langle \varphi^2_o\varphi^2_x \mathbbm{1}_{|\varphi_x|\geq  A_x}\rangle^{0}_{\{o, x\}, 0, \rho_a},\]
where we have used that $A_o \leq A_x$, and we note that $\langle \cdot \rangle^{0}_{\{o, x\}, 0, \rho_a}$ is a product measure. For any $C > 0$, we have 
\begin{equation*}
\nu_{\{x\}, 0, \rho_a}^{0}[|\varphi_x| \geq A_x] \leq e^{-C A_x^2}\nu_{\{x\}, 0, \rho_a}^{0}[e^{C|\varphi_x|^2}] 
\end{equation*}
by Markov's inequality applied to the random variable $e^{C|\varphi_x|^2}$,
and choosing $C$ appropriately we can bound $e^{2B A_x^2} \langle \varphi^2_o\varphi^2_x \mathbbm{1}_{|\varphi_x|\geq  A_x}\rangle^{0}_{\{o, x\}, 0, \rho_a}$ by a constant.
We hence obtain for some constant $C' > 0$
\begin{equation}
    \label{eq: proof of sharpness}
    \langle \varphi_o \varphi_x \rangle^{\eta}_{\Lambda_n, \beta}\leq C' e^{\lambda k} \sqrt{ \Psi_{\Lambda_{n}, \beta}^{(w, \mathsf{p})}[o\longleftrightarrow x]}.
\end{equation}

We next bound $\Psi_{\Lambda_{n}, \beta}^{(w, \mathsf{p})}[o\longleftrightarrow x]$.
Let $\mathsf{q}_y = e^{M d_G(o,y)}$ for some fixed $M > \lambda$ and
define the event $\mathcal{E}_k=\{\exists \, y\in \partial^{\mathrm{ext}} \Lambda_{k}: \sfa_y > \mathsf{q}_y\}$.
By the domain Markov property and monotonicity in boundary conditions, we have
\begin{equation*}
    \Psi_{\Lambda_{n}, \beta}^{(w, \mathsf{p})}[o\longleftrightarrow x] \leq
    \Psi_{\Lambda_{k}, \beta}^{(w, \mathsf{q})}[o\longleftrightarrow x] + \Psi_{\Lambda_n, \beta}^{(w, \mathsf{p})}[\mathcal{E}_k].
\end{equation*}
The connection probability $\Psi_{\Lambda_{k}, \beta}^{(w, \mathsf{q})}[o\longleftrightarrow x]$ decays exponentially in $k$ by Theorem~\ref{Thm: sharpness RC} and inclusion of events, while $\Psi_{\Lambda_n, \beta}^{(w, \mathsf{p})}[\mathcal{E}_k] \leq \Psi_{\Lambda_n, \beta}^{(w, \mathsf{q})}[\mathcal{E}_k]$ decays at least exponentially in $k$ by regularity. Combining this with \eqref{eq: proof of sharpness}, we have that $\langle \varphi_o \varphi_x \rangle^{\eta}_{\Lambda_n,\beta}$ decays exponentially in $k$ for any $|\eta|\leq \mathsf{p}$ so long as $\lambda$ is small enough, and by sending $n$ to infinity and using Proposition~\ref{Prop: plus measure spin model} (ii) we obtain that $\langle \varphi_o \varphi_x \rangle^{+}_{\beta}$ decays exponentially in $k$ as well.
\end{proof}

\bibliographystyle{plain}
\bibliography{references}

@article{random_tangled,
author = {T.~S.~Gunaratnam and C.~Panagiotis and R.~Panis and F.~Severo},
title = {Random tangled currents for $\varphi^{4}$: Translation invariant {G}ibbs measures and continuity of the phase transition},
journal={Journal of the European Mathematical Society (to appear)},
year={2025}
}

@article{well_behaved,
author = {Gunaratnam, T. S. and Panagiotis, C. and Panis, R. and Severo, F.},
journal = {arXiv:2501.05353},
year = {2025},
title = {The supercritical phase of the $\varphi^4$ model is well behaved}
}

@article{PV26,
author = {Panagiotis, C. and Veitch, W.},
journal = {arXiv:2603.26319},
year = {2026},
title = {Regularity of {G}ibbs measures for unbounded spin systems on general graphs}
}

@article{sharpness_annals,
author = {Duminil-Copin, H. and Raoufi, A. and Tassion, V.},
title = {Sharp phase transition for the random-cluster and {P}otts models via decision trees},
journal = {Annals of Mathematics},
year = {2019},
volume = {189},
number = {1},
pages = {75--99}
}

@INPROCEEDINGS{OSSS,
  author={O'Donnell, R. and Saks, M. and Schramm, O. and Servedio, R. A.},
  booktitle={46th Annual IEEE Symposium on Foundations of Computer Science (FOCS'05)}, 
  title={Every decision tree has an influential variable}, 
  year={2005},
  volume={},
  number={},
  pages={31-39},
  doi={10.1109/SFCS.2005.34}}

@article{BL,
volume = {141},
year = {1966},
pages = {517-524},
title = {Theory of the first-order magnetic phase change in {UO}$_2$},
author = {Blume, M.},
copyright = {Copyright 2007 Elsevier B.V., All rights reserved.},
issn = {0031-899X},
journal = {Physical review},
language = {eng},
number = {2},
}

@article{Capel,
volume = {32},
year = {1966},
pages = {966-988},
publisher = {Elsevier B.V},
title = {On the possibility of first-order phase transitions in {I}sing systems of triplet ions with zero-field splitting},
author = {Capel, H. W.},
copyright = {1957},
issn = {0031-8914},
journal = {Physica},
language = {eng},
number = {5},
}

@article{Blume-Capel,
author = {Gunaratnam, T. S. and Krachun, D. and Panagiotis, C.},
issn = {2690-0998},
journal = {Probability and Mathematical Physics},
language = {eng},
number = {3},
pages = {785-845},
title = {Existence of a tricritical point for the {B}lume–{C}apel model on $\mathbb{Z}^{d}$},
volume = {5},
year = {2024},
}

@incollection{DC_lecturenotes,
author = {H.~Duminil-Copin},
note = {Available at https://arxiv.org/abs/1707.00520},
title = {Lectures on the {I}sing and {P}otts models on the hypercubic lattice},
booktitle = {PIMS-CRM Summer School in Probability},
pages = {35--161},
year = {2017},
publisher = {Springer},
}

@article{LSS97,
  title={Domination by product measures},
  author={T.~M.~Liggett and R.~H.~Schonmann and A.~M.~Stacey},
  journal={The Annals of Probability},
  volume={25},
  number={1},
  pages={71--95},
  year={1997},
  publisher={Institute of Mathematical Statistics}
}

@article{ABF87,
  title={The phase transition in a general class of {I}sing-type models is sharp},
  author={M.~Aizenman and D.~J.~Barsky and R.~Fern{\'a}ndez},
  journal={Journal of Statistical Physics},
  volume={47},
  pages={343--374},
  year={1987},
  publisher={Springer}
}

@book{friedli_velenik_2017,
title = {{Statistical Mechanics of Lattice Systems: A Concrete Mathematical Introduction}},
ISBN = {978-1-107-18482-4},
publisher = {Cambridge University Press},
author = {Friedli, S. and Velenik, Y.},
year = {2017}
}

@book{Glimm-Jaffe,
publisher = {Springer New York},
title = {Quantum Physics : A Functional Integral Point of View },
author = {Glimm, J. and Jaffe, A.},
booktitle = {Quantum Physics : A Functional Integral Point of View},
edition = {2nd},
isbn = {1-4612-4728-4},
language = {eng},
year = {1987},
}

@book{WP21,
  author    = {W.~Werner and E.~Powell},
  title     = {Lecture Notes on the Gaussian Free Field},
  series    = {Cours Sp{\'e}cialis{\'e}s},
  volume    = {28},
  publisher = {Soci{\'e}t{\'e} Math{\'e}matique de France},
  address   = {Paris},
  year      = {2021}
}

@article{DuminilTassionNewProofSharpness2016,
  title={A new proof of the sharpness of the phase transition for {B}ernoulli percolation and the {I}sing model},
  author={Duminil-Copin, H. and Tassion, V.},
  journal={Communications in Mathematical Physics},
  volume={343},
  pages={725--745},
  year={2016},
  publisher={Springer}
}

@inproceedings{mensikov1986coincidence,
  title={Coincidence of critical points in percolation problems},
  author={M.~V.~Mensikov},
  booktitle={Soviet Mathematics Doklady},
  volume={33},
  pages={856--859},
  year={1986}
}

@article{aizenman1987sharpness,
  title={Sharpness of the phase transition in percolation models},
  author={M.~Aizenman and D.~J.~Barsky},
  journal={Communications in Mathematical Physics},
  volume={108},
  number={3},
  pages={489--526},
  year={1987},
  publisher={Springer}
}

@article{Van23,
  title={{Exponential decay of the volume for Bernoulli percolation: a proof via stochastic comparison}},
  author={Vanneuville, H.},
  journal={Annales Henri Lebesgue},
  pages={101--112},
  volume={8},
  year={2025}
}

@book{Grimmett_RC,
isbn = {3540328904},
issn = {0072-7830},
language = {eng},
publisher = {Springer-Verlag},
series = {Grundlehren der mathematischen Wissenschaften},
title = {The Random-Cluster Model},
volume = {333},
year = {2006},
author = {Grimmett, G. R.},
}

@article{GG,
author = {B.~T.~Graham and G.~R.~Grimmett},
title = {{Random-cluster representation of the Blume--Capel model}},
journal = {Journal of Statistical Physics},
volume={125},
number={2},
pages={283--316},
year = {2006}
}

@article{Edwards-Sokal,
  title = {Generalization of the {F}ortuin-{K}asteleyn-{S}wendsen-{W}ang representation and {M}onte {C}arlo algorithm},
  author = {Edwards, R. G. and Sokal, A. D.},
  journal = {Physical Review D},
  volume = {38},
  number = {6},
  pages = {2009--2012},
  year = {1988},
  publisher = {American Physical Society},
  doi = {10.1103/PhysRevD.38.2009},
  url = {https://link.aps.org/doi/10.1103/PhysRevD.38.2009}
}

@article{Lebowitz_Presutti,
issn = {0010-3616},
journal = {Communications in Mathematical Physics},
language = {eng},
number = {3},
pages = {195-218},
title = {Statistical mechanics of systems of unbounded spins},
volume = {50},
year = {1976},
author = {J.~L.~Lebowitz and E.~Presutti},
}

@article{P36,
  title={{On Ising's model of ferromagnetism}},
  author={R.~Peierls},
  journal={Mathematical Proceedings of the Cambridge Philosophical Society},
  volume={32},
  number={3},
  pages={477--481},
  year={1936}
}

@article{GJS75,
  title={Phase transitions for $\varphi^4_2$ quantum fields},
  author={J.~Glimm and A.~Jaffe and T.~Spencer},
  journal={Communications in Mathematical Physics},
  volume={45},
  pages={203--216},
  year={1975}
}

@article{Ruelle_estimates,
issn = {0010-3616},
journal = {Communications in Mathematical Physics},
language = {eng},
number = {3},
pages = {189-194},
publisher = {Springer-Verlag},
title = {Probability estimates for continuous spin systems},
volume = {50},
year = {1976},
author = {Ruelle, D.},
}

@article{Ruelle1970,
volume = {18},
year = {1970},
pages = {127-159},
publisher = {Springer-Verlag},
author = {Ruelle, D.},
copyright = {Copyright 2007 Elsevier B.V., All rights reserved.},
issn = {0010-3616},
journal = {Communications in Mathematical Physics},
language = {eng},
number = {2},
title = {Superstable interactions in classical statistical mechanics},
}

@article{Lenz1920beitrag,
  title={{Beitrag zum Verst{\"a}ndnis der magnetischen Erscheinungen in festen K{\"o}rpern}},
  author={Lenz, W.},
  journal={Z. Phys.},
  volume={21},
  pages={613--615},
  year={1920}
}

@phdthesis{Ising1924,
  title={Beitrag zur theorie des ferro-und paramagnetismus},
  author={Ising, E.},
  year={1924},
  school={Grefe \& Tiedemann Hamburg, Germany}
}

@inproceedings{N66,
  title={A quartic interaction in two dimensions},
  author={E.~Nelson},
  booktitle={Mathematical Theory of Elementary Particles, (Dedham, Mass., 1965)},
  pages={69--73},
  year={1966}
}

@article{GJ73,
  title={Positivity of the $\varphi^4_3$ Hamiltonian},
  author={J.~Glimm and A.~Jaffe},
  journal={Fortschritte der Physik},
  volume={21},
  number={7},
  pages={327--376},
  year={1973}
}

@article{frohlich1977pure,
  title={{Pure states for general $P(\varphi)_2$ theories: construction, regularity and variational equality}},
  author={J.~Fr\"ohlich and B.~Simon},
  journal={Annals of Mathematics},
  pages={493--526},
  year={1977},
  publisher={JSTOR}
}

@article{borgs1989unified,
  title={A unified approach to phase diagrams in field theory and statistical mechanics},
  author={C.~Borgs and J.~Z.~Imbrie},
  journal={Communications in Mathematical Physics},
  volume={123},
  number={2},
  pages={305--328},
  year={1989},
  publisher={Springer}
}

@article{DGRSY20,
author = {H.~Duminil-Copin and S.~Goswami and A.~Raoufi and F.~Severo and A.~Yadin},
title = {{Existence of phase transition for percolation using the Gaussian free field}},
volume = {169},
journal = {Duke Mathematical Journal},
number = {18},
publisher = {Duke University Press},
pages = {3539 -- 3563},
year = {2020}
}

@article{EST25,
  title={Counting minimal cutsets and $p_c<1$},
  author={Easo, P. and Severo, F. and Tassion, V.},
  journal={Forum of Mathematics, Pi},
  volume={13},
  year={2025},
  organization={Cambridge University Press}
}

@article{FrohlichSimonSpencerIRBounds1976,
  title={Infrared bounds, phase transitions and continuous symmetry breaking},
  author={Fr{\"o}hlich, J. and Simon, B. and Spencer, T},
  journal={Communications in Mathematical Physics},
  volume={50},
  number={1},
  pages={79--95},
  year={1976},
  publisher={Springer}
}

@article{Lebowitz1977CoexistencePhasesIsing,
  title={Coexistence of phases in {I}sing ferromagnets},
  author={Lebowitz, J. L.},
  journal={Journal of Statistical Physics},
  volume={16},
  number={6},
  pages={463--476},
  year={1977}
}

@article{LammersOtt2021,
  title={{Delocalisation and absolute-value-FKG in the solid-on-solid model}},
  author={Lammers, P. and Ott, S.},
  journal={Probability Theory and Related Fields},
  volume={188},
  pages={63--87},
  year={2024}
}

@article{GS,
  title={{The $(\varphi^4)_2$ field theory as a classical Ising model}},
  author={R.~B.~Griffiths and B.~Simon},
  journal={Communications in Mathematical Physics},
  volume={33},
  number={2},
  pages={145--164},
  year={1973}
}

@article{Hut20,
  title={New critical exponent inequalities for percolation and the random cluster model},
  author={Hutchcroft, T.},
  journal={Probability and Mathematical Physics},
  volume={1},
  number={1},
  pages={147--165},
  year={2020}
}

\end{document}